\documentclass[12pt]{amsart}
\usepackage{verbatim}
\usepackage[mathscr]{eucal}
\usepackage{amscd}
\usepackage{amsthm}
\usepackage{enumerate}
\usepackage{comment}
\usepackage{url}
\usepackage{color}
\usepackage{mathtools,amssymb}
\usepackage[titletoc]{appendix}
\usepackage{hyperref}
\usepackage[margin=1in]{geometry}

\newtheorem{theorem}{Theorem}[section]
\newtheorem{lemma}[theorem]{Lemma}
\newtheorem{corollary}[theorem]{Corollary}
\newtheorem{proposition}[theorem]{Proposition}

\theoremstyle{definition}
\newtheorem{example}[theorem]{Example}
\newtheorem{definition}[theorem]{Definition}
\newtheorem{remark}[theorem]{Remark}

\def\E{\mathbb{E}}

\def\cF{\mathcal{F}}

\def\e{\epsilon}

\newcommand{\xbar}{\bar{x}}

\newcommand{\calF}{\mathcal{F}}

\newcommand{\calP}{\mathcal{P}}

\newcommand{\calE}{\mathcal{E}}

\newcommand{\calS}{\mathcal{S}}

\newcommand{\emp}{\raisebox{1pt}{{$\scriptstyle\langle\makebox[.02in]{}\rangle$}}}
\newcommand{\semp}{\raisebox{0.5pt}{{$\scriptscriptstyle\langle\makebox[.01in]{}\rangle$}}}
\newcommand{\conc}{\raisebox{4pt}{{$\scriptstyle\frown$}}}
\newcommand{\sconc}{\raisebox{2pt}{{$\scriptscriptstyle\frown$}}}

\newcommand{\iseq}{\trianglelefteq}
\newcommand{\bn}[1]{{\prescript{\raisebox{1pt}{$\scriptscriptstyle <$}#1}{}{2}}}
\newcommand{\bne}[1]{{\prescript{\raisebox{1pt}{$\scriptscriptstyle \leq$}#1}{}{2}}}
\newcommand{\bl}[1]{{\prescript{#1}{}{2}}}
\newcommand{\inv}{^{\text{-}1}}

\newcommand{\seq}{\subseteq}

\newcommand{\clqedsym}{\dashv_{\text{\scriptsize claim}}}

\newcommand{\clqed}{%
  \ifmmode
    \eqno{\clqedsym}%
  \else
    \hfill$\clqedsym$%
  \fi
}

\numberwithin{equation}{theorem}

\usepackage{refcount}

\newcounter{savedequation}

\newenvironment{delayedproof}[1]
  {%
    \setcounter{savedequation}{\value{equation}}%
    \setcounter{equation}{0}%
    \renewcommand{\theequation}{\getrefnumber{#1}.\arabic{equation}}%
    \begin{proof}[\textnormal{\textbf{Proof of Theorem~\ref{#1}}}]%
  }
  {%
    \end{proof}%
    \setcounter{equation}{\value{savedequation}}%
  }

\title{Quantitative analytic stable regularity}

\author{G. Conant}
\address{Department of Mathematics, Statistics, and Computer Science\\
University of Illinois Chicago
}
\email{gconant@uic.edu}

\author{C. Terry}
\address{Department of Mathematics, Statistics, and Computer Science\\
University of Illinois Chicago
}
\email{caterry@uic.edu}

\date{July 23, 2026}

\thanks{GC was partially supported by an NSF grant (DMS-2452816); CT was partially supported by an NSF CAREER Award (DMS-2115518) and a Sloan Research Fellowship.}

\begin{document}

\begin{abstract}
 We prove quantitative stable regularity lemmas for binary real-valued functions, extending the work of Malliaris and Shelah \cite{MS} for stable graphs. The statements of our results are  modeled after non-quantitative theorems for stable functions due to Chavarria, Conant, and Pillay \cite{CPC}. One of the key tools in our quantitative proof is an ``analytic symmetry lemma", which gives a function-theoretic analogue of the fact that a pair of good sets in a graph has density close to $0$ or $1$. We also  develop a function-theoretic treatment of Malliaris and Shelah's random sampling method for refining partitions consisting of good sets into equipartitions.  
\end{abstract}

\maketitle

\section{Introduction}
 
\subsection{Summary}
The goal of this paper is to prove  stable regularity lemmas for  $[0,1]$-valued functions using finitary combinatorial techniques that yield fully quantitative and efficient bounds. In this introduction, we will first describe the body of work leading up to this goal, including  Malliaris and Shelah's \cite{MS} quantitative regularity lemma for stable graphs, and  Chavarria, Conant, and Pillay's non-quantitative regularity lemma for stable functions proved using ultraproducts. Our main results are  stated in Subsections \ref{sec:mainregstatements} and \ref{sec:maintreestatements}.

\subsection{Regularity for graphs} 
Szemer\'edi's regularity lemma is a structure theorem for arbitrary finite graphs.   Informally speaking, it says that any finite graph $G$ can be partitioned, so that between most pairs of parts in the partition, $G$ looks like a quasirandom graph. More precisely, given a finite graph $G=(V,E)$ and  nonempty sets $A,B\seq V$, let   
\[
 d_G(A,B)=\frac{|\{(x,y)\in A\times B: xy\in E\}|}{|A||B|}. 
 \]
 We say that $(A,B)$ is \emph{$\e$-regular with respect to $G$} if for all $A'\seq A$ and $B'\seq B$ with $|A'|\geq\e|A|$ and $|B'|\geq \e|B|$, $|d_G(A,B)-d_G(A',B')|\leq\e$. 

 \begin{theorem}[Szemer\'edi's regularity lemma \cite{SzemRL}]\label{thm:SRL} For all $\e>0$ there is a constant $M$ such that for any finite graph $G=(V,E)$, there is $m\leq M$ and an equipartition $V=V_1\cup\ldots\cup V_m$ such that for at least $(1-\e)m^2$ pairs $(i,j)\in [m]\times [m]$, $(V_i,V_j)$ is $\e$-regular with respect to $G$. 
 \end{theorem}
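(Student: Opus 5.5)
We prove this via the standard energy-increment (index) argument. For a partition $\calP=\{V_1,\ldots,V_m\}$ of $V$, define the \emph{index}
\[
q(\calP)=\sum_{i,j}\frac{|V_i||V_j|}{|V|^2}\,d_G(V_i,V_j)^2 .
\]
Two properties drive the argument. First, $0\leq q(\calP)\leq 1$ always. Second, $q$ does not decrease under refinement. The second fact follows from Cauchy--Schwarz (equivalently, Jensen's inequality): the densities of a refinement average back to the coarser density. The plan is to start with an arbitrary equipartition into a bounded number of parts. Whenever the current equipartition fails the conclusion, we refine it so that $q$ increases by at least a fixed amount $\delta(\e)>0$. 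Since $q\leq 1$, this can happen at most $1/\delta$ times, which gives a bound $M$ depending only on $\e$.

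\textbf{Key increment step.} Suppose $(V_i,V_j)$ is not $\e$-regular. Then there are witnesses $A'\seq V_i$ and $B'\seq V_j$ with $|A'|\geq\e|V_i|$, $|B'|\geq\e|V_j|$, and $|d_G(A',B')-d_G(V_i,V_j)|>\e$.
\begin{enumerate}[(1)]
\item Splitting $V_i$ into $\{A',V_i\setminus A'\}$ and $V_j$ into $\{B',V_j\setminus B'\}$ raises the contribution of the pair $(i,j)$ to the index by at least $\e^4|V_i||V_j|/|V|^2$. This is a variance computation: the increase equals the variance of the density over the four subrectangles, and the $A'\times B'$ rectangle alone contributes at least $\e^2\cdot\e^2$.
\item If more than $\e m^2$ pairs are irregular, take for each $V_i$ the common refinement of all the witness sets chosen inside it. This splits each part into at most $2^{2m}$ pieces. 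By monotonicity, every irregular pair still gains, and the total gain is at least $\e m^2\cdot\e^4/m^2=\e^5$, up to constants from the near-equal part sizes.
\end{enumerate}

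\textbf{Restoring an equipartition.} The refined partition is no longer equitable. Chop each piece into blocks of a common size $\lfloor |V|/(m 4^m)\rfloor$, and lump the leftover vertices into an exceptional set. This does not fit the strict formulation of the theorem, so we then redistribute the leftover vertices evenly among the blocks, absorbing a small error.

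The main obstacle is controlling this bookkeeping. Chopping into equal blocks is itself a refinement of the pieces, so it does not lower $q$. But the leftover set, and its redistribution, can lower $q$ by an amount proportional to its relative size. We choose the block size so that the leftovers make up at most an $\e^5/10$ fraction of $V$; the index then still increases by at least $\e^5/2$ per round. The redistribution also perturbs densities slightly, so we run the argument with a smaller parameter, say $\e/2$, and check that $\e/2$-regularity of the unperturbed pieces implies $\e$-regularity after the perturbation.

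Finally, graphs with $|V|$ smaller than the resulting tower-type bound are handled trivially: partition $V$ into singletons. Every pair of singletons is $\e$-regular, and this gives $m=|V|\leq M$. Iterating at most $2\e^{-5}$ rounds starting from $m_0=\lceil 1/\e\rceil$, with $m\mapsto m4^m$ at each round, yields the tower-type $M$.
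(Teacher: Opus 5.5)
Your outline is the standard energy-increment proof, which is what the paper relies on (it cites Szemer\'edi and only remarks that this argument gives tower-type bounds, without reproducing it), and it is correct. One bookkeeping slip: with block size $\lfloor |V|/(m4^m)\rfloor$ the leftovers can be a constant fraction of $V$, so the blocks must have size about $\e^5|V|/(10m4^m)$, which makes each round $m\mapsto O(\e^{-5}m4^m)$ (still tower-type); also, since every round ends with a genuine equipartition on which regularity is tested directly, the final $\e/2$ perturbation step is unnecessary.
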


 The now standard energy increment proof of Szemer\'edi's regularity lemma shows that $M$ can be bounded by an exponential tower of height $(1/\e)^{O(1)}$.  Examples constructed by Gowers \cite{GowSRL} showed that this type of tower dependence is necessary.

\subsection{Stable regularity for graphs} 

In recent years, the regularity lemma has served as a bridge between model theory and combinatorics.  More specifically,  ``tame" versions of the regularity lemma, meaning stronger versions that hold under special combinatorial assumptions, have consistently been shown to have deep connections to model-theoretic tools; see, e.g., \cite{ChStDis,MS,MP,P}.

The first tame regularity lemmas were proved in the setting of bounded  VC-dimension by Alon, Fischer, and Newman \cite{AFN} (for bipartite graphs) and Lov\'{a}sz and Szegedy \cite{LovSzeg} (for graphons). 
Independently, stronger regularity lemmas were proved by  Malliaris and Shelah \cite{MS} for \emph{stable graphs}, which form a special subclass of the graphs of bounded VC-dimension. Stability is defined using a combinatorial configuration typically called the \emph{order property}. However, due to a certain ambiguity around this phrase in the context of functions, we will adopt Hodges' \cite{hodges} terminology of ``ladders".

\begin{definition}\label{def:ladgraphs}
Given $k\geq 1$, a \emph{$k$-ladder} for a graph $G=(V,E)$ consists of sequences $(x_1,\ldots,x_k),  (y_1,\ldots,y_k)\in V^k$ such that for all $i,j\in[k]$, if $i\leq j$ then $x_iy_j\in E$ and if $i>j$, then $x_iy_j\notin E$. When such sequences exist, we say that $G$ \emph{admits a $k$-ladder}. Otherwise, we say $G$ \emph{omits $k$-ladders}. We say $G$ is \emph{$k$-stable} if $G$ omits $k$-ladders.
\end{definition}

 The regularity lemma  for  stable graphs is stated in terms of homogeneous pairs, which we now define. Given a graph $G=(V,E)$ and nonempty sets $X,Y\seq V$, we say that the pair  $(X,Y)$ is \emph{$\e$-homogeneous in $G$} if $d_G(X,Y)<\e$ or $d_G(X,Y)> 1-\e$. It is easy to show that any $\e$-homogeneous pair is $\e^{1/3}$-regular.  

We can now state the main result of Malliaris and Shelah, which says that stable graphs admit  equipartitions with a polynomial number of parts and  homogeneity for \emph{all} pairs.
 
 \begin{theorem}[Malliaris \& Shelah \cite{MS}]\label{thm:MS}
Fix $k\geq 1$ and $0<\epsilon<1$. Then for any sufficiently large finite $k$-stable graph $G=(V,E)$, there is $m\leq O_k((1/\epsilon)^{2^{k+1}-2})$ and an equipartition $V=V_1\cup \ldots \cup V_m$ such that for all $1\leq i,j\leq m$, $(V_i,V_j)$ is $\e$-homogeneous in $G$.
 \end{theorem}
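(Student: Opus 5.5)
We follow the Malliaris--Shelah strategy, which has three stages: bound the tree rank, find maximal-depth ``good'' sets, and randomly refine into an equipartition.

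\textbf{Stage 1: Shelah's tree bound.} First we transfer $k$-stability to a bound on binary trees. Suppose $G$ admits a complete binary tree of height $h$, meaning nodes $b_\sigma$ indexed by $\sigma\in\bn{h}$ such that every branch $\tau\in\bl{h}$ is realized by some vertex $a_\tau$ with $a_\tau b_{\tau|i}\in E$ if and only if $\tau(i)=1$. A standard induction (Hodges' ladder/tree lemma) shows this forces a $k$-ladder once $h\geq 2^{k+1}-2$. So a $k$-stable graph has tree rank $d<2^{k+1}-2$. Any subset $A\seq V$ inherits a rank: the largest $h$ such that a tree of height $h$ has all its leaves realized inside $A$.

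\textbf{Stage 2: good sets of maximal rank.} Fix a threshold $\delta$, polynomial in $\e$. Call $A$ \emph{$\delta$-good} if for every $b\in V$, one of $|N(b)\cap A|$ or $|A\setminus N(b)|$ is less than $\delta|A|$. To extract these we use a greedy descent. Start from $V$. Whenever the current set $A$ is not good, some $b$ splits it into two pieces $A\cap N(b)$ and $A\setminus N(b)$, each of relative size at least $\delta$. If both pieces had the same rank as $A$, we could glue their trees under the root $b$ and raise the rank. So along any splitting path the rank drops within boundedly many steps. After at most $d\leq 2^{k+1}-2$ levels of splitting, each of which loses at most a factor $\delta$ in size, we reach good sets. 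Repeating this on the leftover vertices covers all but an $\e$-fraction of $V$ by disjoint good sets, each of size at least $\delta^{O(2^{k})}|V|$. This is the source of the count $m\leq (1/\e)^{2^{k+1}-2}$.

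\textbf{Stage 3: symmetry for good pairs.} For good $A$ and $B$, each $b\in B$ is almost entirely adjacent or almost entirely non-adjacent to $A$. We must show that the majority behaviour is uniform over $b$, so that $d(A,B)$ is close to $0$ or $1$. Suppose instead that a non-negligible fraction of $B$ is ``mostly $1$'' on $A$ while another non-negligible fraction is ``mostly $0$''. Goodness of $B$ applied from the side of $A$ (vertices $a\in A$ are nearly constant on $B$) then gives a contradiction by double counting the edges of $A\times B$. This gives $\e$-homogeneity for all pairs of good sets, including $A=B$.

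\textbf{Stage 4: equipartition.} This is the main obstacle: the good sets have unequal sizes and there is a small leftover set. Following Malliaris--Shelah, we choose $m$ and cut each good set into pieces of a common size $\lfloor |V|/m\rfloor$, plus small remainders. We gather the remainders and the leftover vertices, of total proportion $O(\e)$, and distribute them by random sampling. Chernoff bounds together with a union bound over the boundedly many pieces and all $b\in V$ show that with positive probability every subpiece remains $\e$-good. This uses the fact that goodness is witnessed pointwise, so random subsets of size $\gg \log|V|/\e^2$ inherit it; here we need $|V|$ sufficiently large. Alternatively, we ensure each final part contains at most an $\e/2$ proportion of junk, which perturbs densities by only $O(\e)$. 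The delicate points are controlling this junk distribution uniformly and keeping the exponents at $2^{k+1}-2$.
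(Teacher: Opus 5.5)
You have the right strategy. The paper does not reprove this theorem; it is quoted from \cite{MS}. But your four stages are the route the paper takes for its function-theoretic generalization, and that route recovers the graph statement by taking $f=\boldsymbol{1}_E$ and $\delta=\frac12$ in Theorem~\ref{thm:1.5sym}:
\begin{enumerate}[$(1)$]
\item rank descent to good sets (Lemma~\ref{lem:findgood}, Corollary~\ref{cor:treeconstant2});
\item the symmetry lemma (Lemma~\ref{lem:twosticks});
\item a random equipartition of each good set with a union bound over all $b\in V$ (Lemmas~\ref{lem:AFP} and~\ref{lem:equi2}, Proposition~\ref{prop:equisym}).
\end{enumerate}
You also correctly saw that $X=Y=V$ is what makes that union bound affordable without VC theory. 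Two things need repair.

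\textbf{Where the randomness goes in Stage 4.} The randomness must be in the cutting, not in the junk.
\begin{enumerate}[$(1)$]
\item Cut each good set $A_i$ by a uniformly random equipartition into pieces of a common size fixed in advance, about $\nu\min_i|A_i|$. The number $\ell$ of parts is then determined by that size, not chosen as $m$ beforehand.
\item A fixed cut can fail. A piece $P\subseteq A_i$ with $|P|<2\delta|A_i|$ may be split evenly by some $b$ with $|N(b)\cap A_i|<\delta|A_i|$, so $P$ need not be good.
\item The junk needs no randomness. Spread it evenly (Remark~\ref{rem:distribute}) and put it into every vertex's exceptional set; this costs only its proportion in the goodness parameter.
\item Apply Stage 3 to the final pieces themselves. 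Homogeneity of $(A_i,A_j)$ is not inherited by sub-pairs; goodness is what the random cut preserves.
\end{enumerate}

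\textbf{The exponent.} Your count does not give the stated exponent, because Stage 2 is not the whole source of $m$.
\begin{enumerate}[$(1)$]
\item Your good sets have relative size $\lambda\geq\delta^{d}\e$. The extra factor $\e$ comes from stopping once the leftover is below $\e|V|$.
\item The equipartition then produces about $(\lambda\nu)^{-1}$ parts, as in $\ell<2(\lambda\nu)^{-1}$ in Lemma~\ref{lem:equi2}.
\item You need $\delta,\nu=\Theta(\e)$. This already requires the symmetry step to lose only a constant factor, as in Lemma~\ref{lem:twosticks}; ``polynomial in $\e$'' is not enough.
\item With those choices you get $(1/\e)^{d+2}$ parts. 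Your bound $d\leq 2^{k+1}-3$ then gives exponent $2^{k+1}-1$, not $2^{k+1}-2$.
\end{enumerate}
To reach $2^{k+1}-2$ you need either a tree--ladder bound one step sharper than the one you quote, or a more economical way of counting the pieces.
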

 
The proof of Theorem \ref{thm:MS} uses finitized versions of  model-theoretic tools and produces explicit polynomial bounds.\footnote{See also work of  Chernikov and Starchenko \cite{ChStNIP} and Ackerman, Freer, and Patel \cite{AFP}, which generalize the combinatorial methods developed in \cite{MS} to hypergraphs and arbitrary relational languages, respectively.}  Their techniques  serve  as the starting point for our strategy in this paper.

\subsection{Analytic regularity} 

Analogues of Szemer\'{e}di's regularity lemma for real-valued functions were initially developed by  Lov\'{a}sz and Szegedy \cite{LoSzAN} and Tao \cite{TaoSRnotes, TaoSRrevisit}. These results are sometimes referred to as \emph{analytic} regularity lemmas.   We present here a simplified form of analytic regularity which emphasizes the analogy to graph regularity. For convenience, we call a function \textit{finite} if its domain is finite. 

Given a finite function $f\colon X\times Y\rightarrow [0,1]$ and (nonempty) subsets $A\subseteq X$ and $B\subseteq Y$, we say that  $(A,B)$ is \emph{$\epsilon$-regular with respect to $f$} if  for all $A'\seq A$ and $B'\seq B$ with $|A'|\geq\e|A|$ and $|B'|\geq\e|B|$, $|\E_{A\times B}f-\E_{A'\times B'}f|\leq\epsilon$.  Note that in a graph $G=(V,E)$, $\epsilon$-regularity with respect to $\boldsymbol{1}_E$ coincides with the standard notion of $\epsilon$-regularity defined above.\footnote{Abusing notation, we view $\boldsymbol{1}_E$ as a function with domain $V\times V$.} The following is a function-theoretic generalization of Szemer\'edi's regularity lemma (Theorem \ref{thm:SRL}). See \cite[Theorem 2.6]{TaoSRnotes} and \cite[Lemma 1.1]{GrTaARL} for closely related results.
 
 \begin{theorem}[regularity for functions \cite{LoSzAN,TaoSRrevisit}]\label{thm:fnreg}
For all $\e>0$ there is a constant $M$ such that for any finite function $f\colon X\times Y\to [0,1]$, there are $ m_1,m_2\leq M$ and equipartitions $X=X_1\cup \ldots \cup X_{m_1}$ and $Y=Y_1\cup \ldots \cup Y_{m_2}$ such that for at least $(1-\e)m_1m_2$  pairs  $(i,j)\in[m_1]\times [m_2]$, $(X_i,Y_j)$ is  $\e$-regular with respect to $f$.  
 \end{theorem}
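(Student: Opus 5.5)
We prove Theorem \ref{thm:fnreg} by the standard energy increment argument, run separately on the two sides of the bipartite domain, and finish with an equitization step. For partitions $\mathcal{P}$ of $X$ and $\mathcal{Q}$ of $Y$, define the \emph{energy}
\[
q(\mathcal{P},\mathcal{Q})=\sum_{P\in\mathcal{P},\,Q\in\mathcal{Q}}\frac{|P||Q|}{|X||Y|}\bigl(\E_{P\times Q}f\bigr)^2 .
\]
This is the squared $L^2$-norm of the conditional expectation of $f$ onto the product $\sigma$-algebra. Since $f$ takes values in $[0,1]$, we have $q\in[0,1]$. By Cauchy--Schwarz (equivalently, orthogonality of conditional expectations), $q$ does not decrease under common refinement.

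The key increment lemma is as follows. Suppose $(P,Q)$ is not $\e$-regular, witnessed by $P'\seq P$ and $Q'\seq Q$ with $|P'|\geq\e|P|$, $|Q'|\geq\e|Q|$ and $|\E_{P'\times Q'}f-\E_{P\times Q}f|>\e$. Splitting $P$ into $\{P',P\setminus P'\}$ and $Q$ into $\{Q',Q\setminus Q'\}$ raises the local energy $\frac{|P||Q|}{|X||Y|}(\E_{P\times Q}f)^2$ by at least $\e^4\frac{|P||Q|}{|X||Y|}$. This follows by computing the variance of the refined conditional expectation on $P\times Q$: the cell $P'\times Q'$ alone has relative weight at least $\e^2$ and deviation greater than $\e$.

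Now suppose at least $\e m_1m_2$ pairs are irregular. We refine every part of $\mathcal{P}$ by all the witness sets $P'$ arising in it, and every part of $\mathcal{Q}$ likewise. This multiplies the number of parts by at most $2^{m_2}$ on the $X$ side and $2^{m_1}$ on the $Y$ side.

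Equitability is handled in the usual way. We maintain equipartitions up to a small exceptional set and, after each refinement, re-chop into equal pieces of a fixed small size $\approx |X|/(m_1 4^{m_2}\e^{-c})$. The leftovers are merged into the exceptional set, whose total measure we keep below $\e/10$ across all iterations. Because the irregular pairs are counted among equal-size parts, the total energy gain is at least $\e\cdot\e^4$ minus the loss due to re-chopping. That loss is controlled since re-chopping is itself a refinement except on the leftover set, which has small measure. Thus the energy increases by at least $\e^5/2$ per step, so the process halts after at most $2\e^{-5}$ steps, with $m_1,m_2$ bounded by a tower of height $O(\e^{-5})$. At the end we distribute the exceptional set evenly among the parts. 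This perturbs each part by a small fraction, so $\e/2$-regular pairs remain $\e$-regular, and we obtain genuine equipartitions of $X$ and $Y$ in which at least $(1-\e)m_1m_2$ pairs are $\e$-regular.

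The main obstacle is the bookkeeping in the equitization step: we must verify that the re-chopping losses and the final redistribution of the exceptional set cost only $O(\e)$ in the regularity parameter and in the energy count. Everything else is a direct transcription of the graph case, with $\boldsymbol{1}_E$ replaced by $f$. The boundedness of $f$ is the only property used, both for $q\leq1$ and for the stability of $\E_{A\times B}f$ under small changes to $A$ and $B$. If $|X|$ or $|Y|$ is smaller than the final number of parts, we instead take partitions into singletons, where every pair is trivially regular. We may also allow $m_1\neq m_2$ throughout, since nothing in the argument couples the two sides except through the pair count.
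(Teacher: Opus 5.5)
Your strategy is the standard energy-increment argument. That is all the paper relies on here: it does not prove Theorem~\ref{thm:fnreg}, but cites it and notes that the usual graph proof carries over. Your increment lemma, the $2^{m_2}$ and $2^{m_1}$ refinement counts, and the tower bound are fine. However, two bookkeeping steps you deferred fail as written.

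First, an exceptional set of measure below $\e/10$ is too large for the final redistribution to be harmless. Suppose a part $\tilde P=P\cup N$ receives a new set $N$ with $|N|=\eta|\tilde P|$. A test set $A'\seq\tilde P$ with $|A'|=\e|\tilde P|$ may contain all of $N$, so up to an $\eta/\e$ fraction of $A'\times B'$ lies in $N\times B'$, where $f$ is uncontrolled. Hence $\E_{A'\times B'}f$ can shift by about $\eta/\e$. When $\eta\approx\e/10$ this is $1/10$, far more than $\e$. To pass from $\e/2$-regular to $\e$-regular you need roughly $\eta\leq\e^2/8$. Then the total error is at most $2\eta/\e+2\eta\leq\e/2$, and the shrunken test sets $A'\cap P$, $B'\cap Q$ still have relative size at least $\e/2$. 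Your chop size actually delivers this. The leftover at each step is at most $m_12^{m_2}b\approx\e^c2^{-m_2}|X|$, and $m_2$ strictly increases, so the exceptional set has measure at most $\e^c|X|$. Hence $c\geq 3$ suffices.

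Second, your fallback for small sets breaks when only one side is small. If $|X|$ is small but $|Y|$ is huge, putting $Y$ into singletons violates $m_2\leq M$. Putting only $X$ into singletons does not make the pairs $(\{x\},Y_j)$ trivially regular. The repair is to switch a side to singletons at the first moment its chop size would drop below $1$. At that point its size is bounded by the current tower value, and since this is a refinement, the energy does not decrease. Then keep iterating on the other side alone. A witness inside $\{x\}$ must be $\{x\}$ itself, so an irregular pair $(\{x\},Q)$ is witnessed by some $Q'\seq Q$ only. Splitting $Q$ along $Q'$ gains at least $\e^3\frac{|Q|}{|X||Y|}$, so the same accounting goes through.
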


 Theorem \ref{thm:fnreg} can be deduced using the same kind of energy increment argument used to prove Theorem \ref{thm:SRL}  (see, e.g., \cite[Chapter 2]{Zhao-book}). This proof  yields the same tower-type upper bound for the dependence of $M$ on $\e$. One can see that  this type of dependence is necessary  by considering the indicator function  of a bipartite Gowers-type example from the graph setting. Such constructions exist, as shown by Fox and Lov\'{a}sz \cite{FoxLov}.

 \subsection{Qualitative analytic stable regularity}
 As the number of tame regularity lemmas has grown in the literature, a distinction has emerged between quantitative finitary proofs and non-quantitative proofs obtained via ultraproducts.  For instance, shortly after the quantitative proof of Theorem \ref{thm:MS}, Malliaris and Pillay \cite{MP} used pseudofinite methods and other tools from model-theoretic stability theory (such as definability of types) to prove a qualitative version of  Theorem \ref{thm:MS} without  explicit bounds. This proof was later revisited by Pillay \cite{P} and connected to domination by type spaces.

 In \cite{CPC}, Chavarria, Conant, and Pillay adapt methods from \cite{MP,P} to continuous logic\footnote{Here, by ``continuous logic," we mean a specific version of first-order logic for formulas taking values in the interval $[0,1]$; see \cite{BBHU}.} in order to prove a function analogue of Theorem \ref{thm:MS} (or, equivalently, a stable analogue of Theorem \ref{thm:fnreg}). To state this result, we need to define function-theoretic adaptations of ladders and homogeneous pairs.

 \begin{definition}\label{def:lad}
Given $k\geq 1$ and $\delta>0$, a \emph{$(k,\delta)$-ladder} for a function $f\colon X\times Y\to [0,1]$ consists of sequences $(x_1,\ldots,x_k)\in X^k$, $(y_1,\ldots,y_k)\in Y^k$, and $(r_1,\ldots,r_k)\in [0,1]^k$ 
such that for all $i,j\in[k]$, if $i\leq j$ then $f(x_i,y_j)\geq r_i+\delta$, and if $i>j$ then  $f(x_{i},y_{j})\leq r_i$.

 When such sequences exist, we say that $f$ \emph{admits a $(k,\delta)$-ladder}. Otherwise, we say $f$ \emph{omits $(k,\delta)$-ladders}.
\end{definition}

\begin{definition}\label{def:almostconstant}
Fix a finite function $f:X\times Y\rightarrow [0,1]$. Given $A\seq X$, $B\seq Y$, and $\delta,\epsilon> 0$, we say that the pair $(A,B)$ is \emph{$\e$-almost $\delta$-constant with respect to $f$} if there is some $E\seq A\times B$ such that $|E|> (1-\e)|A||B|$ and for any $(x,y),(x',y')\in E$,
\[
|f(x,y)-f(x',y')|<\delta.
\]
\end{definition}

Note that a $k$-ladder in a graph $G=(V,E)$ is the same thing as a $(k,\delta)$-ladder for $\boldsymbol{1}_E$, provided $\delta<1$. Similarly, a pair of subsets of $V$ is $\e$-homogeneous with respect to $G$ if and only if it is $\e$-almost $\delta$-constant with respect to $\boldsymbol{1}_E$ for some/any $\delta<1/2$. 

In analogy to graphs, one can show that in the setting of functions, an $\e$-almost $\delta$-constant pair  is $\gamma$-regular in the sense of Theorem \ref{thm:fnreg} for some $\gamma$ depending only on $\delta$ and $\epsilon$ (e.g., $\gamma=\delta+4\e^{1/3}$ suffices, though we do not claim this is optimal). However, as with graphs, being almost constant is a much stronger structural property than regularity.

We now state a non-quantitative regularity lemma for functions omitting ladders, which follows from the results in \cite{CPC}. This will include two versions: one with the usual equipartitions, and another with ``$f$-definable partitions", which are possibly non-equitable, but consist of  pieces with  low descriptive complexity in the sense of first-order definability  
(see Definition \ref{def:fdef}). 
This latter kind of partition is the main focus of \cite{CPC}, and was first made explicit in the graphs setting  by Malliaris and Pillay \cite{MP}.

\begin{theorem}[Chavarria, Conant, \& Pillay \cite{CPC}]\label{thm:CPC2}
Given $k\geq 1$ and $\delta,\epsilon>0$,  there is a constant $M$ such that the following holds.  Suppose $f\colon X\times Y\rightarrow [0,1]$ is a finite function that omits $(k,\delta)$-ladders.
\begin{enumerate}[$(a)$]
\item There are $f$-definable partitions $X=X_1\cup \ldots \cup X_{m_1}$ and $Y=Y_1\cup \ldots \cup Y_{m_2}$, with $m_1,m_2\leq M$, such that  for all $(i,j)\in[m_1]\times[m_2]$, $(X_i,Y_j)$ is $\e$-almost $(10\delta+\epsilon)$-constant with respect to $f$.
\item There are equipartitions $X=X_1\cup \ldots \cup X_{n_1}$ and $Y=Y_1\cup \ldots \cup Y_{n_2}$, with $n_1,n_2\leq M$, such that  for all $(i,j)\in[n_1]\times[n_2]$, $(X_i,Y_j)$ is $\e$-almost $(10\delta+\epsilon)$-constant with respect to $f$.
\end{enumerate}
\end{theorem}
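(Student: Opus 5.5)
Since this theorem is attributed to \cite{CPC} and obtained there by ultraproducts, I would prove it by contradiction with a pseudofinite compactness argument, in the style of \cite{MP,P} adapted to continuous logic.

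Fix $k,\delta,\e$. Suppose no constant $M$ works. Then there is a sequence of finite functions $f_n\colon X_n\times Y_n\to[0,1]$, each omitting $(k,\delta)$-ladders, such that no partitions with at most $n$ pieces witness $(a)$ (or $(b)$). Take a nonprincipal ultraproduct of the two-sorted $[0,1]$-valued structures $(X_n,Y_n,f_n)$, together with the normalized counting measures. This gives $f\colon X\times Y\to[0,1]$ with Loeb measures $\mu,\nu$, and Fubini holds for definable sets, in the form of Keisler's theorem for continuous-logic ultraproducts. Omitting $(k,\delta)$-ladders is first-order expressible, so by \L o\'s the limit $f$ omits $(k,\delta)$-ladders. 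Hence $f$ is stable in the continuous-logic sense up to error about $\delta$. The key consequences I would use are the following.
\begin{itemize}
\item[(i)] Every complete type $p\in S_x$ is definable up to error $O(\delta)$.
\item[(ii)] Every Keisler measure is approximable by a convex combination of finitely many types. Equivalently, $\mu$ is generically stable, and its support $S(\mu)$ is small.
\end{itemize}

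For part $(a)$ I would proceed in two steps.

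First, I show that for $\mu$-almost every type $p$ and $\nu$-almost every $q$, the value $f(p,q)$ is determined up to $O(\delta)$. Let $S(\mu)$ and $S(\nu)$ be the supports. By stability, each $p\in S(\mu)$ has an $f$-definable $f(p,y)$ up to $O(\delta)$. By compactness of $S(\mu)$ and this definability, $S(\mu)$ is covered by finitely many basic open sets $U_1,\dots,U_{m_1}$ given by $f$-formulas. On each $U_i$, the definition $f(p,y)$ varies by at most $O(\delta)$. The same holds for $S(\nu)$ with sets $V_1,\dots,V_{m_2}$.

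Second, I show that on each $U_i\times V_j$, $f$ is $\mu\otimes\nu$-almost constant up to roughly $10\delta$. This uses that stable types commute: $f(p,q)$ computed via either definition agrees up to $O(\delta)$. I expect this step to be the main obstacle. It requires tracking how the $\delta$ errors accumulate through approximate definability, and it is where the constant $10\delta+\e$ arises.

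The measure-zero complement and the boundary errors are absorbed into $\e$ by shrinking the neighborhoods. Pulling the finitely many $f$-formulas back by \L o\'s, for almost all $n$ they give $f_n$-definable partitions with at most $m_1,m_2$ pieces. These satisfy the $\e$-almost $(10\delta+\e)$-constant condition, which is a finite measure statement transferring through the ultraproduct. This contradicts the choice of $f_n$.

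For part $(b)$ I would start from $(a)$ with parameter $\e'\ll\e$ and refine each $X_i$ into pieces of a common size $\lfloor |X|/N\rfloor$, collecting the leftover parts into a small number of extra pieces. Pieces inside a single $X_i$ inherit almost constancy with error scaled by the piece's relative density. So I would discard parts $X_i$ of density below $\e'/m_1$. The leftover and tiny parts have total measure $O(\e')$, and I would handle them by redistributing them into the full pieces. Each full piece then still has a $(1-\e)$-fraction on which $f$ is $\delta'$-constant.

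The delicate point is that a piece merged from several $X_i$ is not almost constant. To avoid this, every piece must be mostly contained in a single $X_i$, which is ensured by taking $N\gg m_1/\e$. This gives a bound $M$ depending only on $k,\delta,\e$.
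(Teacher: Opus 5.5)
Both parts of the proposal have a genuine gap. Part $(b)$ fails on a circular choice of parameters, and the key covering step in part $(a)$ is false as stated.

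\textbf{Part $(b)$.} An $\e'$-almost constant pair $(X_i,Y_j)$ can have its entire exceptional set inside one product $P\times Q$ of your sub-pieces. So for an unspecified refinement, your only control is the bound you state, $\e'|X_i||Y_j|/(|P||Q|)$.
\begin{itemize}
\item Part $(a)$ says nothing about how unbalanced the parts are, so this factor can be of order $N^2$, which forces $\e'\lesssim\e/N^2$.
\item You also need $N\gg m_1/\e$, where $m_1$ is only bounded by $M(\e')$.
\item $M(\e')$ must grow as $\e'\to0$. Already for an equivalence relation with about $1/(2\e')$ equal classes, which omits $2$-ladders, any such partition needs $\Omega(1/\e')$ parts.
\end{itemize}
These requirements are circular, so $\e'$ and $N$ cannot both be chosen. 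What is missing is a refinement that spreads the exceptional sets evenly. The paper does this by random sampling, using Hoeffding's inequality for sampling without replacement (Proposition \ref{prop:hoeff}, Lemmas \ref{lem:AFP}--\ref{lem:equi2}) in two rounds (Theorem \ref{thm:equi}):
\begin{itemize}
\item first on the $X$-side, for the functions $x\mapsto\E_{B_j}\boldsymbol{1}_{E_{i,j}}(x,y)$;
\item then on the $Y$-side, for the functions $y\mapsto\E_{X'_i}\boldsymbol{1}_{E_{\sigma(i),j}}(x,y)$ indexed by the new pieces $X'_i$.
\end{itemize}
This yields weakly good pairs, which Lemma \ref{lem:twosticks} turns into almost constant pairs. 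The alternative in \cite{CPC} is an extra-strong regularity in which $\e$ can be made small as a function of the number of parts (Remark \ref{rem:strong}). That is exactly what breaks the circularity you face.

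\textbf{Part $(a)$.} Continuity in the logic topology controls $f(p',b)$ for only finitely many $b$ at a time. A neighborhood of $p$ on which $f(p',b)$ stays within $O(\delta)$ of $f(p,b)$ uniformly in $b$ exists essentially only when $p$ is isolated for the $d$-metric. For $\{0,1\}$-valued $f$ and small $\delta$, your covering would force $S(\mu)$ to contain only finitely many distinct $f$-types. But for an equivalence relation with classes of proportions $2^{-1},2^{-2},\dots$, the measure gives positive mass to infinitely many of them, one per class.

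What the pseudofinite arguments of \cite{MP,P,CPC} use instead is goodness in measure:
\begin{itemize}
\item choose finitely many types carrying all but $\e$ of $\mu$ (atoms in the discrete case; small $d$-balls in continuous logic, where $\mu$ need not be atomic, e.g.\ $f(x,y)=g(x)$);
\item take definable sets whose measure barely exceeds the mass of each such type, so that for every $b$ most of the set agrees with the type;
\item absorb the leftover, which has small but positive measure.
\end{itemize}
Your item (ii) does not supply this. Weak-$*$ density of finitely supported measures holds in any compact space and is not uniform in $b$. A minor point: since ladder conditions are closed, \L o\'s only gives that the ultraproduct omits $(k,\delta')$-ladders for $\delta'>\delta$.

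For comparison, the paper never uses ultraproducts. Theorem \ref{thm:CPC2qu} derives the statement with $4\delta+\gamma$ in place of $10\delta+\e$. It uses Proposition \ref{prop:agnosticladders}$(a)$ to bound ladders in $f^{opp}$, then applies Theorems \ref{thm:1c} and \ref{thm:1ceq}. There the good $f$-definable sets come directly from the rank argument of Lemma \ref{lem:findgood}.
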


This statement has been modified from its original version, and formatted to fit the setting of this paper. In Appendix \ref{sec:appendix}, we will provide a detailed comparison between Theorem \ref{thm:CPC2}, as stated above, and various results in \cite{CPC}. 
While the work in \cite{CPC} provides another striking connection between model-theoretic tools and regularity lemmas,  the proof methods do not yield any quantitative information about the bound $M$ in Theorem \ref{thm:CPC2}. This brings us to the primary goal of  the present article, which is a new proof of Theorem \ref{thm:CPC2} using direct combinatorial methods that provide explicit and efficient bounds. 

\begin{remark}\label{rem:fopp}
One important subtlety about Theorem \ref{thm:CPC2} is that its assumptions automatically imply a bound on ladders omitted in the ``dual" function $f^{opp}\colon Y\times X\to [0,1]$, which  sends $(y,x)$ to $f(x,y)$. In particular, if a function $f$ omits $(k,\delta)$-ladders then for any $\e>0$, $f^{opp}$ omits $(\lceil \e\inv\rceil k,\delta+\e)$-ladders (see Proposition \ref{prop:agnosticladders}$(a)$).  Since Theorem \ref{thm:CPC2} is non-quantitative, there is no reason to explicitly mention the ladder bound for $f^{opp}$ in the statement.\footnote{We also note that the actual results in \cite{CPC} are phrased in terms of omitting a different symmetric ladder configuration  (see Definition \ref{def:agnostic} and Remark \ref{rem:agnosticsym}).}  On  the other hand, since our main results are quantitative, we will use separate parameters for the ladders omitted in $f$ versus $f^{opp}$.
\end{remark}

\subsection{Quantitative analytic stable regularity} \label{sec:mainregstatements}

We now state our first main result, which is a version of Theorem \ref{thm:CPC2}$(a)$ with explicit and efficient quantitative bounds.

\begin{theorem}\label{thm:1c}
Fix $k,k'\geq 1$ and $\delta,\delta',\epsilon>0$ with $\e<1$.  Suppose $f\colon X\times Y\rightarrow [0,1]$ is a finite function such that $f$ omits $(k,\delta)$-ladders and $f^{opp}$ omits $(k',\delta')$-ladders.    Then there are $s_1\leq (9/\e)^{4^{k'}}$, $s_2\leq (9/\e)^{4^k}$ and $f$-definable  partitions $X=X_1\cup \ldots \cup X_{s_1}$ and $Y=Y_1\cup \ldots \cup Y_{s_2}$ such that for all $(i,j)\in[s_1]\times[s_2]$, $(X_i,Y_j)$ is $\epsilon$-almost $2(\delta+\delta')$-constant with respect to $f$.
\end{theorem}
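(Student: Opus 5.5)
We will follow the Malliaris--Shelah strategy, adapted to functions, with ladder rank (a finitary Shelah $2$-rank / tree rank) playing the role of stability rank. Note the bound on $s_1$ depends on $k'$, the ladder bound for $f^{opp}$, and the bound on $s_2$ on $k$. So we partition $X$ by working inside $f^{opp}$ and $Y$ by working inside $f$. The result should be that every piece has a \emph{good} set property: for each $X_i$ and each $y\in Y$, most $x\in X_i$ give $f(x,y)$ in a fixed interval of length about $\delta'$. Dually, for each $Y_j$ and each $x$, most $y\in Y_j$ give $f(x,y)$ in an interval of length about $\delta$.

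\textbf{Step 1: $\epsilon$-good sets via a rank argument.} For $f^{opp}$ omitting $(k',\delta')$-ladders, define a finite rank on subsets $A\seq X$. It measures the depth of binary trees of splittings, where $A$ splits into $\{x\in A: f(x,y)\geq r+\delta'\}$ and $\{x\in A:f(x,y)\leq r\}$, both of relative size at least $\e/c$. Omitting ladders bounds the tree height: a branch of height $2^{k'}$-ish yields a $(k',\delta')$-ladder by the usual finite ``ladder from tree'' argument. Sets of locally maximal rank are $\e$-good: no $y$ splits them with both sides large. The partition is built by iterated splitting. Start with $X$. While a current piece is not good, choose a witness $y$ and threshold $r$ (discretized to multiples of $\delta'$ or similar) and split it using level sets of $f(\cdot,y)$. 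All sets produced are Boolean combinations of sets $\{x: f(x,y)\leq r\}$, so they are $f$-definable. The number of pieces is at most (branching $\le 9/\e$ after discretization)$^{\text{depth}}$, with depth bounded by $4^{k'}$ via the tree/ladder correspondence. This gives $s_1\le(9/\e)^{4^{k'}}$, and similarly $s_2\le (9/\e)^{4^k}$ for $Y$ using $f$. The small leftover pieces (too small to split further) must be handled. Either we absorb them, or we define the partition so that every leaf is good and the exceptional mass is charged to the $\e$ in almost-constancy.

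\textbf{Step 2: analytic symmetry lemma.} This is the key step and the main obstacle. Given $X_i$ good for $f^{opp}$ with tolerance $\delta'$ and $Y_j$ good for $f$ with tolerance $\delta$, we must show $(X_i,Y_j)$ is $\e$-almost $2(\delta+\delta')$-constant. For each $y\in Y_j$ let $I_y$ be an interval of length $\delta'$ containing $f(x,y)$ for most $x\in X_i$. For each $x\in X_i$ let $J_x$ be an interval of length $\delta$ containing $f(x,y)$ for most $y\in Y_j$. Double count the pairs $(x,y)$ with $f(x,y)\in I_y\cap J_x$; all but an $O(\e)$-fraction of pairs qualify. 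Then fix a typical $x_0$, so $J_{x_0}$ contains $f(x_0,y)$ for most $y$. For most such $y$, $I_y$ meets $J_{x_0}$, so $I_y$ lies within $\delta+\delta'$ of a fixed point $c$. Hence the set $E$ of pairs with $f(x,y)\in I_y\cap J_x$ and $I_y$ near $c$ has all values within $2(\delta+\delta')$ of one another. The care is in choosing the goodness parameters, say $\e/3$ for each side, so that the exceptional sets total at most $\e|X_i||Y_j|$. This fixes the constant $9$ in the base of the bound.

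\textbf{Step 3: assemble.} Apply Step 1 to $f^{opp}$ to obtain $X_1,\dots,X_{s_1}$ and to $f$ to obtain $Y_1,\dots,Y_{s_2}$. Then apply Step 2 to each pair. We expect the hardest technical point, besides the symmetry lemma, to be getting exactly $4^{k}$ in the exponent. For this we will run the tree-rank bound carefully: a splitting tree of height $h$ with thresholds $\delta$ apart yields a ladder of length about $\log_2 h$ after a Ramsey-free path argument. We will try first to bound the depth by $2^{k+1}$ per discretized threshold level and absorb the rest into $4^k$.
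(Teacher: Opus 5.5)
Your architecture (rank, then good pieces, then a symmetry lemma) matches the paper's, but the factor $2$ in $2(\delta+\delta')$ is in the wrong place, and this is a genuine gap.

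Step~1 asserts that omitting $(k',\delta')$-ladders bounds the depth of splittings with gap $\delta'$, so that the $X_i$ come out $\delta'$-good and the $Y_j$ come out $\delta$-good. For functions this is false. The available tree-to-ladder result, Theorem~\ref{thm:hodgesfn}, needs a $(\binom{2k}{k}-1,2\delta)$-tree to produce a $(k,\delta)$-ladder, and the doubling cannot be removed (Remark~\ref{rem:treetoladder}$(2)$). Here is a concrete instance:
\begin{itemize}
\item Take $X=\{x_\sigma:\sigma\in\bn{t}\}$ and $Y=\{y_\eta:\eta\in\bl{t}\}$. Let $f(x_\sigma,y_\eta)$ be $0$ if $\sigma\conc 0\iseq\eta$, $\delta$ if $\sigma\conc 1\iseq\eta$, and $\delta/2$ otherwise.
\item Taking all $r_\sigma=0$ gives a $(t,\delta)$-tree.
\item Whatever $t$ is, there is no $(3,\delta)$-ladder. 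Since $f\leq\delta$, rows $2$ and $3$ force $r_2=r_3=0$. Hence $\sigma_2\conc 0\iseq\eta_1$, $\sigma_2\conc 1\iseq\eta_2,\eta_3$, $\sigma_3\conc 0\iseq\eta_1,\eta_2$ and $\sigma_3\conc 1\iseq\eta_3$, which yields both $|\sigma_3|<|\sigma_2|$ and $|\sigma_2|<|\sigma_3|$.
\item For $t\geq 1/\epsilon$, any partition of $Y$ into $(\delta,\epsilon)$-good sets needs $2^{\Omega(1/\epsilon)}$ pieces. So your Step~1 conclusion actually fails.
\end{itemize}
What Step~1 can honestly deliver is $2\delta'$-good $X_i$ and $2\delta$-good $Y_j$. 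The tree height $t=\binom{2k}{k}-1$ comes from Theorem~\ref{thm:hodgesfn}, which is a substantial result and not a ``Ramsey-free path argument''. The exponent then comes from $2t=2\binom{2k}{k}-2\leq 4^k$. The $2t$ arises from greedily extracting large good subsets from the remainder (Lemma~\ref{lem:findgood}, Corollary~\ref{cor:treeconstant}), which also disposes of the leftovers.

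Once the doubling is put where it belongs, your Step~2 cannot reach the target. Suppose every $I_y$ has length $<a$ and every $J_x$ has length $<b$. Pivoting on one typical $x_0$ only gives almost $(2a+b)$-constancy, since each retained value lies within $a$ of some $f(x_0,y)\in J_{x_0}$. With $a=2\delta'$ and $b=2\delta$ you get $4\delta'+2\delta$, or $2\delta'+4\delta$ pivoting on a $y_0$, never $2(\delta+\delta')$. In effect you put the factor $2$ into the symmetry step, where it is avoidable, rather than into the tree/ladder step, where it is not.

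The missing ingredient is the sharp Lemma~\ref{lem:twosticks}, which gives $(a+b)$-constancy and is tight by Example~\ref{ex:twosticks}. Its proof is two-sided rather than a pivot:
\begin{itemize}
\item If $f$ is not almost $(a+b)$-constant off the exceptional set $Z$, Proposition~\ref{prop:separation} gives $r$ with $R_0=\{f\leq r\}$ and $R_1=\{f\geq r+a+b\}$ both large.
\item Goodness makes the row projections $V_0,V_1$ of $R_0,R_1$ disjoint, and likewise the column projections $W_0,W_1$.
\item Goodness also forces $V_0\times W_1$ and $V_1\times W_0$ into $Z$. A pair $(x,y)\notin Z$ there would have $f(x,y)$ within $b$ of a value $\leq r$ in its row and within $a$ of a value $\geq r+a+b$ in its column.
\item Hence $|Z|\geq v_0w_1+v_1w_0\geq 2\sqrt{|R_0||R_1|}$, contradicting the smallness of $Z$.
\end{itemize}
Applied to $2\delta'$- and $2\delta$-good pieces, this yields exactly $2(\delta+\delta')$.
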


In light of Remark \ref{rem:fopp}, Theorem \ref{thm:1c} directly implies Theorem \ref{thm:CPC2}$(a)$ with $m_1 \leq (9/\e)^{4^{\lceil2 /\e\rceil k}}$, $m_2\leq  (9/\e)^{4^{k}}$   and with $10\delta$ improved to $4\delta$ in the conclusion. See Theorem \ref{thm:CPC2qu} for a more detailed statement to this effect, with additional flexibility in the parameters.

The proof of Theorem \ref{thm:1c} is based on a simplified version of  Malliaris and Shelah's proof of Theorem \ref{thm:MS}, which is outlined  in the remarks following \cite[Corollary 5.10]{TW}. 
In rough terms, the proof proceeds by first partitioning $X$ and $Y$ into sets satisfying an analytic analogue of ``goodness" (as defined for graphs by Malliaris and Shelah), and then applying an analytic ``symmetry lemma" (Lemma \ref{lem:twosticks}) to conclude that pairs of good sets are almost constant. The initial partitioning step will require  an analytic version of Hodges' well-known quantitative correspondence between ladders and trees in the setting of graphs, which is proved in our companion paper \cite{CT-hodges} (see Theorem \ref{thm:hodgesfn}).\footnote{The precise form  of Theorem \ref{thm:hodgesfn} allows for a slight improvement to the  $4^k$ term in Theorem \ref{thm:1c}; see Remark \ref{rem:treetoladder}$(1)$.} In particular, Theorem \ref{thm:1c} will be preceded by an analogous result for functions omitting certain tree configurations. Further details will be given in Subsection \ref{sec:maintreestatements}.

Our second main result provides a corresponding quantitative version of Theorem \ref{thm:CPC2}$(b)$.

\begin{theorem}\label{thm:1ceq}
Fix  $k,k'\geq 1$ and $\delta,\delta',\e>0$ with $\e<1$.
Let $X$ and $Y$ be sufficiently large finite sets, and suppose $f\colon X\times Y\to [0,1]$ is  such that $f$ omits $(k,\delta)$-ladders and $f^{opp}$ omits $(k',\delta')$-ladders. Then there are $\ell_1< 62(17/\e)^{4^{k'}}$, $\ell_2< 62(17/\e)^{4^k}$, and equipartitions $X=X_1\cup \ldots \cup X_{\ell_1}$ and $Y=Y_1\cup \ldots \cup Y_{\ell_2}$
such that, for all $(i,j)\in[\ell_1]\times[\ell_2]$, $(X_i,Y_j)$ is $\e$-almost $2(\delta+\delta')$-constant with respect to $f$.
\end{theorem}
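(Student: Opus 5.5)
Theorem \ref{thm:1ceq} will be derived from Theorem \ref{thm:1c}. The idea is to refine the good-set partitions into equipartitions by random sampling, in the style of Malliaris and Shelah, adapted to functions.

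\textbf{Step 1.} Apply Theorem \ref{thm:1c}, but with a smaller error parameter $\e_0$ (about $\e/2$). This gives $f$-definable partitions $X=X_1\cup\ldots\cup X_{s_1}$ and $Y=Y_1\cup\ldots\cup Y_{s_2}$ with $s_1\leq (9/\e_0)^{4^{k'}}$ and $s_2\leq(9/\e_0)^{4^k}$. Every pair $(X_i,Y_j)$ is $\e_0$-almost $2(\delta+\delta')$-constant.

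In fact I would use the stronger output of the proof of Theorem \ref{thm:1c} rather than its bare statement. That proof shows each $X_i$ is ``good'': for each $y\in Y$, all but an $\e_0$-fraction of $x\in X_i$ have $f(x,y)$ inside a fixed interval of width below $\delta+\delta'$ attached to $(X_i,y)$. The analogous statement holds for each $Y_j$. Goodness is what survives passing to subsets. If $X_i'\seq X_i$ is large, then for every $y$ the exceptional set has proportion at most $\e_0|X_i|/|X_i'|$ in $X_i'$. This is exactly why the pieces of the final equipartition can be arbitrary subsets of the $X_i$ of proportional size.

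\textbf{Step 2 (equitization).} Choose a target part size $t\approx |X|/\ell_1$, where $\ell_1$ is of order $s_1/\e_0$. Cut each $X_i$ into $\lfloor |X_i|/t\rfloor$ blocks of size exactly $t$. Then collect the leftovers, of total size less than $s_1t\approx\e_0|X|$, into a garbage set $R$.

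The garbage must be redistributed so that no final part is badly contaminated. To do this, split $R$ into pieces of tiny size (at most about $\e_0 t$) and append one piece to each block. Blocks drawn from small $X_i$ (those with $|X_i|<t$) go into $R$ as well, and their total size is below $s_1t$. To make contamination uniform I would use the random sampling method: choose the distribution of $R$ uniformly at random. Then use a Chernoff/Hoeffding bound, union-bounded over all $y\in Y$ (and symmetrically over all $x$), to guarantee that each final part $P$ contains few garbage points and that, for each $y$, few elements of $P$ are exceptional. The hypothesis that $X$ and $Y$ are sufficiently large is used only here, to make the union bound over $|Y|$ events succeed.

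Do the same on the $Y$ side. The constants $17$ and $62$ should come from choosing $\e_0\approx\e/2$ and the block size with rounding losses (giving $9\to17$), plus the factor from the count $\ell_1\lesssim s_1/\e_0$.

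\textbf{Step 3 (almost constancy of final pairs).} Take a final pair $(P,Q)$ with $P$ mostly inside $X_i$ and $Q$ mostly inside $Y_j$. Almost all of $P\times Q$ lies in $X_i\times Y_j$. Then use the goodness from Step 1, restricted to subsets of proportional size, together with the symmetry lemma (Lemma \ref{lem:twosticks}) applied to the restricted good sets. This yields a set $E\seq P\times Q$ of measure greater than $1-\e$ on which $f$ varies by less than $2(\delta+\delta')$.

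\textbf{Main obstacle.} The hardest step is bookkeeping the errors in Step 3. Almost constancy of $(X_i,Y_j)$ does not directly pass to subsets $P\times Q$: the $\e_0$ exceptional mass could concentrate in one small block. So I must rely on goodness, which is a pointwise-in-$y$ property, and reapply the symmetry lemma to the restricted sets. This keeps the exceptional fraction in each block controlled by $\e_0|X_i|/|P|$, and that ratio is handled by the random distribution in Step 2.
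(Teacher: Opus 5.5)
Your Step 2 has a genuine gap, and it is exactly the obstacle the paper's proof is built to avoid. The hypothesis ``$X$ and $Y$ sufficiently large'' only means that $|X|$ and $|Y|$ exceed a constant depending on $k,k',\delta,\delta',\e$. It says nothing about $|Y|$ relative to $|X|$. For a fixed $y$, Hoeffding gives a failure probability of about $\exp(-c\nu^2|P|)$ with $|P|\approx|X|/\ell_1$. A union bound over all $y\in Y$ therefore needs $\log|Y|\lesssim\nu^2|X|/\ell_1$, and symmetrically $\log|X|\lesssim\nu^2|Y|/\ell_2$. Neither is implied by the hypotheses.

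There is also no evident way to reduce to boundedly many $y$. The exceptional sets $\{x\in X_i: f(x,y)\notin I_y\}$ are threshold sets without a margin, so stability gives no VC-type control over them. The covering-number route of Appendix~\ref{sec:appendixB} works only after widening the windows by $2\eta$, so it cannot give $2(\delta+\delta')$.

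In fact Step 2 never uses stability, and as a stand-alone lemma it is false. Proposition~\ref{prop:goodcounter} gives a partition into $\e$-good pieces of density at least $\e/3$ such that every equipartition into at most $L$ pieces has a piece that is not $\frac14$-good. The union bound is legitimate only when $|X|\geq\Omega(\log|Y|)$, for example for symmetric $f$ with $X=Y$. That case is Proposition~\ref{prop:equisym} and Theorem~\ref{thm:1.5sym}.

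The missing idea is to stop asking the final pieces to be fully good. Start from Corollary~\ref{cor:treeconstant2}, using the tree bounds $t=\binom{2k}{k}-1$ and $t'=\binom{2k'}{k'}-1$ with gaps $2\delta,2\delta'$ from Theorem~\ref{thm:hodgesfn}. Fix relations $E_{i,j},F_{i,j}\seq A_i\times B_j$ witnessing that $(A_i,B_j)$ is weakly good, and aim only for weakly good pairs (Definition~\ref{def:good}).

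In Theorem~\ref{thm:equi}, Hoeffding is applied only to the averaged functions $x\mapsto\E_{B_j}\boldsymbol{1}_{E_{i,j}}(x,y)$ and $x\mapsto\E_{B_j}\boldsymbol{1}_{F_{i,j}}(x,y)$, of which there are at most $2\lambda_2\inv$ per piece. Then $Y$ is equitized relative to the already chosen blocks $X'_i$, using $y\mapsto\E_{X'_i}\boldsymbol{1}_{E_{\sigma(i),j}}(x,y)$ and its $F$-analogue, at most $2\ell_1$ per piece. Every union bound is therefore over a number of events depending only on the parameters. The output pairs are only weakly good, which is precisely why Lemma~\ref{lem:twosticks} is proved for weakly good pairs.

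Separately, your bookkeeping does not reach the stated bound. Taking the bare bound $s_1\leq(9/\e_0)^{4^{k'}}$ from Theorem~\ref{thm:1c} and then $\ell_1\approx s_1/\e_0$ gives exponent $4^{k'}+1$. The paper gets $\ell_1<62(17/\e)^{t'+1}$ with $t'+1=\binom{2k'}{k'}\leq 4^{k'}$. This comes from the piece density $\e_0^{t'}$ in Corollary~\ref{cor:treeconstant2}, with $\e_0=\e/17$, $\nu=\e/527$, and $527=31\cdot 17$.
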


Similar to before, Theorem \ref{thm:1ceq} directly implies Theorem \ref{thm:CPC2}$(b)$ with $n_1\leq 62(17/\e)^{4^{\lceil2 /\e\rceil k}}$, $n_2\leq 62(17/\e)^{4^{k}}$ and with $10\delta$ improved to $4\delta$ in the conclusion (again, see Theorem \ref{thm:CPC2qu}). 

  To obtain  equipartitions with efficient bounds in Theorem \ref{thm:1ceq}, we will develop a function-theoretic version of the   random sampling method used in the discrete setting by Malliaris and Shelah   \cite{MS}. However, our approach will differ from that in \cite{MS} in some significant ways. In particular, Malliaris and Shelah combine random sampling with tools from VC-theory in order to prove that a vertex partition of a stable graph with good pieces can be refined into an equipartition whose pieces are still good.\footnote{Strictly speaking, Malliaris and Shelah work with a stronger notion of ``excellent sets". For the sake of simplicity, we restrict our discussion of their paper to good sets (as this notion is all  that we need here).} In the setting of functions, it turns out that the direct generalization of this approach using VC-theory only works to establish a weaker version of  Theorem \ref{thm:1ceq} due to certain subtleties that do not arise in the discrete case (see Appendix \ref{sec:appendixB} for details). For this reason, we will in fact avoid VC-theory entirely and prove a sharper equipartitioning result for arbitrary functions in terms of a weaker notion of goodness with certain quantitative advantages (see Theorem \ref{thm:equi}).

\subsection{Main results in terms of omitted trees} \label{sec:maintreestatements}

In this subsection, we reformulate Theorems \ref{thm:1c} and \ref{thm:1ceq} in the setting of functions omitting an analytic version of binary trees. This extends the discrete case of binary relations, first established by Shelah in a model-theoretic setting via ``2-rank", and reformulated later by Hodges \cite{hodges} in a combinatorial setting. The function-theoretic extension of this perspective is the main subject of our companion paper \cite{CT-hodges}. 

In order to state our notion of trees for functions, we need to review some standard notation for the tree structure on binary strings.
Given $t\geq 1$, let $\bl{t}$ denote $\{0,1\}^t$.  We view elements of $\bl{t}$ as binary strings of length $t$. By convention $\bl{0}=\{\emp\}$ contains only the empty string, denoted $\emp$.   Given $t\geq 1$, 
\[
\textstyle\bn{t}=\bigcup_{0\leq i<t}\bl{i}.
\]
We also use $\bne{t}$ to denote $\bn{t+1}$.
For $s,t\geq 0$, $\sigma \in \bl{s}$, and $\mu \in \bl{t}$, let $\sigma \conc \mu\in \bl{s+t}$ denote the concatenation of $\sigma$ and $\mu$. 
We write $\sigma\iseq \tau$ if $\sigma$ is an initial segment of $\tau$, i.e., if there is some   $\mu$ such that $\tau=\sigma\conc \mu$.

\begin{definition}\label{def:eptree1}
Given $t\geq 1$ and $\delta>0$, a \emph{$(t,\delta)$-tree} for a function $f\colon X\times Y\to [0,1]$ consists of sequences $(x_{\sigma}: \sigma\in \bn{t})$ from $X$, $(y_{\eta}: \eta\in \bl{t})$ from $Y$, and $(r_\sigma:\sigma\in \bn{t})$ from $[0,1]$
such that for all $\sigma\in \bn{t}$ and  $\eta,\eta'\in \bl{t}$, if  $\sigma\conc 0\iseq \eta$ and $\sigma\conc 1\iseq \eta'$ then
\[
f(x_{\sigma},y_{\eta})\leq r_{\sigma}\quad\text{and}\quad f(x_{\sigma},y_{\eta'})\geq r_{\sigma}+\delta.
\]
When such sequences exist, we say that $f$ \emph{admits a $(t,\delta)$-tree}. Otherwise, we say $f$ \emph{omits $(t,\delta)$-trees}.
\end{definition} 

As in the discrete case, one can establish a direct correspondence between trees and ladder coded in functions. In one direction, it is relatively straightforward to show that if $f\colon X\times Y\to [0,1]$ admits a $(2^t,\delta)$-ladder, then it also admits a $(t,\delta)$-tree (see \cite[Appendix A.2]{CT-hodges}). To prove Theorems \ref{thm:1c} and \ref{thm:1ceq}, however, we need the other direction, which requires more work. In particular, the following is one of the main results of our companion paper.

\begin{theorem}[{\cite[Theorem 1.11]{CT-hodges}}]\label{thm:hodgesfn}
Given $k\geq 1$ and $\delta>0$, if $f\colon X\times Y\to [0,1]$ admits a $(\binom{2k}{k}-1,2\delta)$-tree, then $f$ admits a $(k,\delta)$-ladder.
\end{theorem}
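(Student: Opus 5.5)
The plan is to adapt Hodges' combinatorial argument from the graph setting. A $(t,2\delta)$-tree gives, at each node $\sigma$, a single threshold $r_\sigma$ separating the leaves below $\sigma\conc 0$ (where $f(x_\sigma,\cdot)\le r_\sigma$) from the leaves below $\sigma\conc 1$ (where $f(x_\sigma,\cdot)\ge r_\sigma+2\delta$). The ladder will consist of nodes $x_{\sigma_1},\dots,x_{\sigma_k}$ together with leaves $y_{\eta_1},\dots,y_{\eta_k}$. The ladder threshold attached to $x_{\sigma_i}$ will be $r_{\sigma_i}$, or a slightly adjusted value. The ladder inequalities at $x_{\sigma_i}$ then reduce to a purely combinatorial requirement: for each $j\ge i$ the leaf $\eta_j$ extends $\sigma_i\conc 1$, and for each $j<i$ it extends $\sigma_i\conc 0$. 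So the first step is to forget $f$ and reduce everything to this statement about binary strings.

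The core is a double induction on $(a,b)$ with height function $g(a,b)=\binom{a+b}{a}-1$. It satisfies $g(a,b)=g(a-1,b)+g(a,b-1)+1$ and $g(a,0)=g(0,b)=0$. The hypothesis $P(a,b)$ I would aim for is the following. Any full binary tree of height $g(a,b)$ contains nodes $\sigma_1,\dots,\sigma_n$ and leaves $\eta_1,\dots,\eta_n$ realizing the ladder pattern above, where the configuration either has length $a$ in the ``standard'' orientation or length $b$ in the ``reversed'' orientation. In the reversed orientation, $\eta_j$ extends $\sigma_i\conc 1$ iff $j>i$.

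For the inductive step, take the root $\emp$ and look at its two subtrees. Each subtree contains a copy of a tree of height at least $g(a-1,b)$ and one of height at least $g(a,b-1)$; I would pad with the longer subtree and truncate as needed.
\begin{itemize}
\item Apply $P(a-1,b)$ inside the subtree under $\emp\conc 1$. If it yields a standard configuration of length $a-1$, prepend the root as $x_1$ and add a new leaf under $\emp\conc 1$ (chosen so that it is consistent, e.g.\ on the appropriate side of the existing $\sigma_i$). All leaves extend $\emp\conc 1$, so the root is high on every $y_j$, and we get length $a$.
\item Symmetrically, apply $P(a,b-1)$ under $\emp\conc 0$ to extend reversed configurations by one, with the root placed last.
\end{itemize}
At the diagonal $a=b=k$, either outcome gives a configuration of length $k$. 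A reversed configuration is turned into a standard ladder by reversing both index orders and shifting the $y$-indices by one. The extra leaf needed for this shift is exactly where I expect the ``$-1$'' in $\binom{2k}{k}-1$ and the extra room in the subtrees to be consumed.

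The main obstacle is formulating $P(a,b)$ so that the inductive step closes. In particular, the new leaf added together with the root must lie on the correct side of every previously chosen node, which forces the hypothesis to also record a distinguished free leaf or subtree. I would first try strengthening $P(a,b)$ to provide an extra leaf $\eta_0$ lying on the ``low'' side of all $\sigma_i$.

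The analytic part is where the factor $2$ in $2\delta$ comes in. Reversing the orientation reverses which side is ``low'' relative to the index order, so the thresholds $r_\sigma$ for nodes used in reversed position must be re-chosen. Taking the new threshold to be $r_\sigma+\delta$ makes the original condition $\ge r_\sigma+2\delta$ become $\ge (r_\sigma+\delta)+\delta$, while $\le r_\sigma$ gives $\le r_\sigma+\delta$. This is how a $2\delta$-gap in the tree should yield a $\delta$-gap in the ladder. I would check this bookkeeping last, once the combinatorial induction is fixed.
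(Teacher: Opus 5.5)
Your first step, the reduction to a statement about binary strings, is where the argument fails. No strengthening of $P(a,b)$ that keeps this conclusion can repair it.

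Requiring $\eta_j$ to extend $\sigma_i\conc 1$ for $j\geq i$ and $\sigma_i\conc 0$ for $j<i$ forces every chosen leaf to lie below every chosen node. For $k\geq 3$ no such configuration exists in a binary tree of \emph{any} height. To see this, note that $\sigma_2$ and $\sigma_3$ are both initial segments of $\eta_2$, hence comparable.
\begin{itemize}
\item If $\sigma_2\iseq\sigma_3$, then $\eta_1$ and $\eta_2$ both extend $\sigma_3\conc 0$, so they pass through the same child of $\sigma_2$. But they are required to pass through $\sigma_2\conc 0$ and $\sigma_2\conc 1$ respectively.
\item If $\sigma_3\iseq\sigma_2$, then $\eta_2$ and $\eta_3$ both extend $\sigma_2\conc 1$, so they pass through the same child of $\sigma_3$. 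But they are required to pass through $\sigma_3\conc 0$ and $\sigma_3\conc 1$.
\end{itemize}
So your $P(a,b)$ is false once $\max(a,b)\geq 3$, and likewise for the reversed pattern. The variant with an extra leaf below every $\sigma_i\conc 0$ already fails at length $2$.

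The underlying issue is that a tree controls $f(x_\sigma,y_\eta)$ only when $\eta$ extends $\sigma$. Any ladder of length at least $3$ extracted from a tree must therefore use pairs where $y_\eta$ lies outside the subtree below $x_\sigma$, and there $f$ is unconstrained. Handling these pairs is the heart of the theorem, and your plan discards them at the outset.

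This also shows your account of the factor $2$ is misplaced. Re-indexing a configuration (reversing or shifting) never requires new thresholds. The threshold of $x_i$ only has to separate the $y$'s on which $x_i$ is high from those on which it is low, and re-indexing does not change these sets, so $r_\sigma$ keeps working with gap $2\delta$. If your combinatorial core were true, a $(\binom{2k}{k}-1,\delta)$-tree would yield a $(k,\delta)$-ladder with no loss at all. Remark~\ref{rem:treetoladder}(2) says this loss cannot be avoided in general.

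The shift by $\delta$ is needed exactly for the uncontrolled pairs. For $\eta$ not extending $\sigma$, classify $f(x_\sigma,y_\eta)$ as either $\geq r_\sigma+\delta$ or $<r_\sigma+\delta$. Then:
\begin{itemize}
\item A node can serve with threshold $r_\sigma$ if all its uncontrolled leaves are used on its high side and lie in the first class.
\item It can serve with threshold $r_\sigma+\delta$ if all its uncontrolled leaves are used on its low side and lie in the second class.
\end{itemize}
In either case the tree-controlled pairs keep a gap of $\delta$, but no single node may use uncontrolled leaves on both sides.

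For example, let $\tau_1,\ldots,\tau_k$ lie on one branch with $\tau_{i+1}$ extending $\tau_i\conc 1$. Let $\eta_j$ extend $\tau_{j+1}\conc 0$ for $j<k$, and let $\eta_k$ extend $\tau_k\conc 1$. Then $x_{\tau_i}$ is controlled-high on $y_{\eta_j}$ for $j\geq i$, controlled-low on $y_{\eta_{i-1}}$, and uncontrolled on $y_{\eta_j}$ for $j<i-1$. If those uncontrolled values are all below $r_{\tau_i}+\delta$, the thresholds $r_{\tau_i}+\delta$ give a $(k,\delta)$-ladder.

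What your proposal lacks is an argument that forces this kind of homogeneous behaviour. The natural candidate is a Ramsey-type induction on $(a,b)$ over these two classes, which is where a recursion like your $g(a,b)=g(a-1,b)+g(a,b-1)+1$ would genuinely arise.
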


As we discuss in detail in \cite{CT-hodges}, related results with  weaker bounds can be derived from \cite{AndBen} and \cite{DaskGo}. Our version above is necessary to obtain polynomial bounds (in $1/\e$) in Theorems \ref{thm:1c} and \ref{thm:1ceq}. We now state the corresponding versions of these theorems in the setting of omitting trees. In many ways, one should regard these versions as the actual main results of the paper and view Theorems \ref{thm:1c} and \ref{thm:1ceq} as immediate corollaries (modulo Theorem \ref{thm:hodgesfn}; details are given below). These versions are also of inherent interest due to a direct connection between $(t,\delta)$-trees and the notion of sequential fat-shattering dimension in learning theory (see \cite[Appendix B.1]{CT-hodges}).

\begin{theorem}\label{thm:maintree}
Fix $t,t'\geq 1$ and $\delta,\delta',\e>0$ with $\e<1$.  Suppose $f\colon X\times Y\rightarrow [0,1]$ is  such that $f$ omits $(t,\delta)$-trees and $f^{opp}$ omits $(t',\delta')$-trees.  
\begin{enumerate}[$(a)$]
\item There are $s_1\leq (9/\epsilon)^{2t'}$, $s_2\leq (9/\epsilon)^{2t}$, and $f$-definable  partitions $X=X_1\cup \ldots \cup X_{s_1}$ and $Y=Y_1\cup \ldots \cup Y_{s_2}$ such that for all $(i,j)\in[s_1]\times[s_2]$, $(X_i,Y_j)$ is $\epsilon$-almost $(\delta+\delta')$-constant with respect to $f$.
\item Assume $X$ and $Y$ are sufficiently large. Then there are  $\ell_1< 62(17/\e)^{t'+1}$, $\ell_2< 62(17/\e)^{t+1}$, and equipartitions $X=X_1\cup \ldots \cup X_{\ell_1}$ and $Y=Y_1\cup \ldots \cup Y_{\ell_2}$
such that, for all $(i,j)\in[\ell_1]\times[\ell_2]$, $(X_i,Y_j)$ is $\e$-almost $(\delta+\delta')$-constant with respect to $f$.
\end{enumerate}
\end{theorem}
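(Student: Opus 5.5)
Part $(a)$ follows the Malliaris--Shelah scheme through ``good'' sets: a tree-rank argument partitions $X$ and $Y$ into good pieces, and a symmetry lemma shows that pairs of good pieces are almost constant. Part $(b)$ then refines these pieces into equipartitions by random sampling.

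\emph{Goodness.} Call $A\seq X$ \emph{$\e$-good} (for $f$, with tolerance $\delta$) if for every $y\in Y$ there is $r\in[0,1]$ with all but at most $\e|A|$ elements $x\in A$ satisfying $f(x,y)\in[r-\delta, r+\delta]$ in a suitable one-sided sense. More precisely: for every $y$ and every threshold $r$, either
\[
|\{x\in A: f(x,y)\leq r\}|<\e|A| \quad\text{or}\quad |\{x\in A: f(x,y)\geq r+\delta\}|<\e|A|.
\]
Goodness of $B\seq Y$ is defined dually, using $f^{opp}$ and $\delta'$.

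\emph{Step 1: rank argument.} Define a rank on subsets $A\seq X$, namely the largest $s$ for which $A$ ``admits'' a $(s,\delta)$-tree of $f^{opp}$ in a counting sense. Splitting nodes are chosen so that both halves $\{f\leq r\}$ and $\{f\geq r+\delta\}$ have relative size at least $\e$.

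Since $f^{opp}$ omits $(t',\delta')$-trees, this rank is below $t'$. If $A$ is not good, some $y,r$ splits $A$ into two large pieces plus a remainder (the middle band $r<f<r+\delta$), and the rank on each piece drops.

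Iterating, we obtain a partition of $X$ into good sets plus discarded small sets. To make all parts good, including the remainder, I would assign middle-band points greedily or use a finer discretization of thresholds, refining with grid values $r\in(\e/9)\mathbb{Z}$. Each split produces at most about $9/\e$ pieces, and there are at most $2t'$ levels of splitting, which gives $s_1\leq (9/\e)^{2t'}$. The pieces are boolean combinations of sets $\{x: f(x,y)\leq r\}$, so the partition is $f$-definable. The same argument applied to $f^{opp}$, using that $f$ omits $(t,\delta)$-trees, gives the partition of $Y$ with $s_2\leq(9/\e)^{2t}$.

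\emph{Step 2: symmetry.} Apply the analytic symmetry lemma (Lemma~\ref{lem:twosticks}): if $A$ is good for $f$ and $B$ is good for $f^{opp}$, then $(A,B)$ is $\e$-almost $(\delta+\delta')$-constant. The idea is as follows.
\begin{itemize}
\item Each $y\in B$ determines, by goodness of $A$, an interval of width about $\delta$ containing $f(x,y)$ for most $x\in A$.
\item Dually, each $x\in A$ determines an interval of width about $\delta'$ for most $y\in B$.
\item A double-counting or Fubini argument then shows these intervals must overlap for most pairs.
\item Hence $f$ takes values in a common interval of width less than $\delta+\delta'$ on a set $E$ of density greater than $1-\e$.
\end{itemize}
I expect this to be the main obstacle. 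The difficulty is to get the width exactly $\delta+\delta'$ and the exceptional density exactly $\e$ from the one-sided threshold form of goodness, since a naive argument loses factors of $2$. The choice of grid constants ($9/\e$) has to be calibrated to this step.

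\emph{Part $(b)$: equipartitions.} Start from a good partition with a smaller $\e$, for example $\e/17$ in place of $\e/9$, which explains the base $17/\e$. Use the equipartition theorem, Theorem~\ref{thm:equi}.
\begin{itemize}
\item Discard or redistribute a small number of leftover elements.
\item Randomly split each good part into pieces of a common size $N$.
\item Show with high probability (Chernoff plus a union bound over $y\in Y$ and thresholds, which needs $X$ and $Y$ large) that each random subset of a good set remains good with a slightly worse parameter.
\item Absorb remainders into pieces, costing another $\e$-fraction.
\end{itemize}
The number of parts is about $1/\text{(min piece density)}$, which gives $\ell_1<62(17/\e)^{t'+1}$. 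Applying the symmetry lemma to the refined pieces finishes the proof.
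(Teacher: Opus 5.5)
Your overall plan matches the paper's: tree rank gives good pieces, Lemma~\ref{lem:twosticks} handles pairs, and Theorem~\ref{thm:equi} produces equipartitions. However, Step~1 has a genuine gap. Split a non-good $A\seq X$ by $(y,r)$ into $A_0=\{x\in A: f(x,y)\leq r\}$ and $A_1=\{x\in A: f(x,y)\geq r+\delta'\}$. Gluing trees under the root $y$ only shows that \emph{at least one} of $A_0,A_1$ has smaller rank: if both kept the rank of $A$, then $A$ would have larger rank. The other side may keep full rank. The middle band $\{x\in A: r<f(x,y)<r+\delta'\}$ can have density up to $1-2\e$ and need not have smaller rank at all. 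So recursively splitting every piece has no depth bound in terms of $t'$. Neither ``assigning middle-band points greedily'' nor an $(\e/9)$-grid of thresholds fixes this, and the count ``$9/\e$ pieces per level, $2t'$ levels'' is not where the bound comes from.

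The missing idea is Lemma~\ref{lem:findgood} and Corollary~\ref{cor:treeconstant}.
\begin{itemize}
\item Follow \emph{only} the side whose rank drops. Proposition~\ref{prop:separation} gives two sides of density $\geq\e$ whenever goodness fails. After at most $t'-1$ steps this yields one $f$-definable $(\delta',2\e)$-good subset of density $\geq\e^{t'-1}$.
\item Remove it and repeat on the $f$-definable complement, middle bands included. Stop once the remainder is smaller than $\e^2|X_1|$, where $X_1$ is the first piece.
\item Every piece then has density $\geq\e^{2t'}$, so there are at most $\e^{-2t'}$ pieces. The remainder is merged into $X_1$ at a cost of only $\e^2$ in goodness, giving a genuine partition.
\end{itemize}
Lemma~\ref{lem:twosticks} turns weakly $(\delta_1,\delta_2;\e)$-good pairs into $4\e$-almost (not $\e$-almost) $(\delta_1+\delta_2)$-constant pairs. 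So the pieces must be $\e/4$-good. Running the above with $\e_*=\e/9$ gives $2\e_*+\e_*^2\leq\e/4$, and this is the actual source of $(9/\e)^{2t'}$.

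In part $(b)$, citing Theorem~\ref{thm:equi} as a black box is legitimate, but the mechanism you describe is not what it does and would not give the statement. A Chernoff bound plus a union bound over all $y\in Y$, used to keep random pieces of a \emph{fully} good set good, needs $|X|\geq\Omega(\log|Y|)$; this is Proposition~\ref{prop:equisym}. The hypothesis ``$X$ and $Y$ sufficiently large'' does not provide that. A union bound over thresholds also requires discretization, which costs in $\delta$. Moreover, Proposition~\ref{prop:goodcounter} shows that for arbitrary functions full goodness cannot survive equipartitioning at all. The stable route via covering numbers in Appendix~\ref{sec:appendixB} only yields $(3\delta+3\delta'+\gamma)$-constant pairs.

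The paper avoids this with \emph{weakly} good pairs. Fix one pair of witnesses $E_{i,j},F_{i,j}\seq A_i\times B_j$ for each $(i,j)$. It then suffices to control the $2s_2\leq 2\lambda_2\inv$ averaged functions $x\mapsto\E_{B_j}\boldsymbol{1}_{E_{i,j}}(x,y)$ and $x\mapsto\E_{B_j}\boldsymbol{1}_{F_{i,j}}(x,y)$. After that, one controls $2\ell_1$ analogous functions on $Y$ built from the new pieces of $X$. The number of functions is independent of $|Y|$.

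Also, to get the exponents $t'+1$ and $t+1$, you must start from Corollary~\ref{cor:treeconstant2}, not from the part $(a)$ partition. That corollary keeps exceptional sets of density $<\e_0$ aside, with $\e_0=\e/17$, and has pieces of density $\geq\e_0^{t'}$ in $X$ and $\geq\e_0^{t}$ in $Y$. The part $(a)$ partition only guarantees density $\geq\e_0^{2t}$. Then $\lambda_2=\e_0^{t}$, $\nu=\e/527$, and $\rho=\e_0$ give $2(\lambda_2\nu)\inv=62(17/\e)^{t+1}$, and likewise for $\ell_1$. Lemma~\ref{lem:twosticks} with $4\tilde{\e}<\e$ finishes.
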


Part $(a)$ of Theorem \ref{thm:maintree} is proved in Section \ref{sec:mainproof1}, after which we will make several remarks on the statements. This will include discussion of the bounds, the special case of symmetric functions, and the relationship between omitting trees in $f$ versus $f^{opp}$. Part $(b)$ is proved later in Section \ref{sec:mainproof2}. 

We now explain how to deduce the main results for ladders in Subsection \ref{sec:mainregstatements} from Theorems \ref{thm:hodgesfn} and \ref{thm:maintree}.

\begin{proof}[\textnormal{\textbf{Proof of Theorem \ref{thm:1c}}}]
Theorems \ref{thm:maintree}$(a)$ and \ref{thm:hodgesfn} immediately yield the statement with $s_1\leq (9/\e)^{2t'}$ and $s_2\leq (9/\e)^{2t}$, where $t=\binom{2k}{k}-1$ and $t'=\binom{2k'}{k'}-1$. Thus we just need to check that for any $k\geq 1$,  $2\binom{2k}{k}-2\leq 4^k$. In fact, the inequality $2\binom{2k}{k}\leq 4^k$ holds, and can be proved easily by induction on $k$. 
\end{proof}

\begin{proof}[\textnormal{\textbf{Proof of Theorem \ref{thm:1ceq}}}]
 Similar to the previous argument, Theorems \ref{thm:maintree}$(b)$ and \ref{thm:hodgesfn} together imply Theorem \ref{thm:1ceq}  with $4^k$ replaced by the (smaller) value $\binom{2k}{k}$.
\end{proof}

We finish this subsection with two remarks on the previous proofs.

\begin{remark}\label{rem:treetoladder}$~$
\begin{enumerate}[$(1)$]
\item By the proof of Theorem \ref{thm:1c}, we see that in the bound on $s_1$ and $s_2$, $4^k$ can be replaced by $2\binom{2k}{k}-2$. Since $\binom{2k}{k}$ grows asymptotically like $4^k/\sqrt{\pi k}$,  this sharpens the exponent of the bound by a factor of order $\sqrt{k}$.  A similar improvement holds for Theorem \ref{thm:1ceq}. 
\item The discrepancy between $\delta$ and $2\delta$ in Theorem \ref{thm:hodgesfn} causes ``$(\delta+\delta')$-constant" in the conclusion of Theorem \ref{thm:maintree} to become ``$2(\delta+\delta')$-constant in the conclusions of Theorems \ref{thm:1c} and \ref{thm:1ceq}. It turns out that this discrepancy in Theorem \ref{thm:hodgesfn} cannot be avoided (see \cite[Proposition 3.4]{CT-hodges}). Moreover, the $\delta+\delta'$ term in Theorem \ref{thm:maintree} itself arises from our analytic symmetry lemma (Lemma \ref{lem:twosticks}), which is also tight in its dependence on $\delta$ and $\delta'$ by Example \ref{ex:twosticks}. Altogether, an improvement to either $\delta+\delta'$ in Theorem \ref{thm:maintree} or $2(\delta+\delta')$ term Theorems \ref{thm:1c} and \ref{thm:1ceq} would require an entirely different strategy. .
\end{enumerate}
\end{remark}

\subsection{Notation and conventions} \label{notation}

When working with arbitrary functions, we assume a nonempty domain. If the domain is a Cartesian product (e.g., $f\colon X\times Y\to [0,1]$), then we assume both $X$ and $Y$ are nonempty.

Throughout the paper, all sets are assumed to be finite unless otherwise stated or obviously construed from their description (e.g., the interval $[0,1]$).

We will use the following standard notation and definitions.

\begin{enumerate}[$(1)$]
\setlength{\itemsep}{5pt}
\item We use $\exp$ and $\log$ for base 2 exponentiation and logarithm.
\item Given an integer $n\geq 1$,  set $[n]:=\{1,\ldots, n\}$.  
\item Given real numbers $x,y,\delta$, we write $x\approx_{\delta} y$ to mean $|x-y|< \delta$.

\item Given a set $S$ of parameters and real numbers $x,y>0$, we write $x\leq O_S(y)$ to mean that $x\leq Cy$ for some constant $C>0$ depending only on the parameters in $S$. When $S=\emptyset$ (so $C$ is an absolute constant), we just write $x\leq O(y)$. Likewise, we write $x\geq \Omega_S(y)$ to mean that $x\geq Cy$ for some $C>0$ depending only on $S$.

\item Given sets $A\seq X$, let $\boldsymbol{1}_A\colon X\to \{0,1\}$ denote the indicator function of $A$.

\item A \emph{partition} of a set $X$ is a collection of pairwise disjoint nonempty subsets whose union is $X$. We typically write partitions as unions, e.g., $X=X_1\cup\ldots\cup X_n$. In several results, we will slightly abuse  terminology and allow partitions $X=X_0\cup X_1\cup \ldots\cup X_n$ in which the first piece $X_0$ is a small ``error set" that may be empty.  

\item An \emph{equipartition} of a finite set $X$ is a partition $X=X_1\cup\ldots\cup X_n$ such that $||X_i|-|X_j||\leq 1$ for all $1\leq i,j\leq n$.
\item \label{not:distinct} Given a set $X$ and an integer $n\geq 0$, define
\[
\binom{X}{n}=\{Y\subseteq X: |Y|=n\}\quad \text{and}\quad (X)_n=\{\xbar\in X^n:x_i\neq x_j\text{ for all }i\neq j\}.
\]
\item \label{not:levels} Given a set $E\subseteq X\times Y$ and elements $a\in X$, $b\in Y$, define
 \[
 E(a,Y) =\{y\in Y: (a,y)\in E\}\quad\text{and}\quad
 E(X,b) =\{x\in X: (x,b)\in E\}.
\]

 \item\label{not:opp} Given $f\colon X\times Y\rightarrow [0,1]$, let $f^{opp}$ denote the function from $Y\times X$ to $[0,1]$ that maps $(y,x)$ to $f(x,y)$.

\item\label{not:fibers} Given $f\colon X\times Y\rightarrow [0,1]$ and $b\in Y$, let  $f_b$ denote the function from $X$ to $[0,1]$ that maps $x$ to $f(x,b)$.  Set $\cF_f=\{f_y:y\in Y\}$, and note that $\cF_f\seq [0,1]^X$.

\item Given a finite function $f\colon X\to [0,1]$ and a nonempty set $A\seq X$, we write $\E_A f$ for $\frac{1}{|A|}\sum_{x\in A}f(x)$. We may also write $\E_A f(x)$ when emphasis on the variable is needed. Given $n\geq 1$ and $\xbar\in X^n$, we  write $\E_{\xbar}f$ for $\frac{1}{n}\sum_{i=1}^n f(x_i)$ (so, if $\xbar=(x_1,\ldots, x_n)$ has pairwise distinct coordinates, then $\E_{\xbar}f=\E_{\{x_1,\ldots, x_n\}}f$).  
\end{enumerate}

\subsection{Outline}
In Section \ref{sec:good}, we develop the notion of weakly good sets for functions and prove the analytic symmetry lemma (Lemma \ref{lem:twosticks}). In Section \ref{sec:mainproof1}, we prove Theorem \ref{thm:maintree}$(a)$. In Section \ref{sec:equipartition}, we use random sampling to prove an equipartitioning result for weakly good sets.  This is used in Section \ref{sec:mainproof2} to prove Theorem \ref{thm:maintree}$(b)$. We will also prove suitable analogues of Theorems \ref{thm:1ceq} and \ref{thm:maintree}$(b)$ for symmetric functions (see Theorems \ref{thm:1.csym} and \ref{thm:1.5sym}).

 The paper concludes with two appendices. In Appendix \ref{sec:appendix}, we compare Theorem \ref{thm:CPC2} to corresponding results in \cite{CPC}, and we explicitly derive the exact quantitative bounds for those results provided by our main theorems. In Appendix \ref{sec:appendixB}, we prove some further results related to the problem of equipartitioning good sets for functions.

\section{Good sets and an analytic symmetry lemma}\label{sec:good}

In this section, we introduce analytic generalizations of  good sets, first defined for graphs in \cite{MS}. We then prove a function-theoretic version of a symmetry lemma appearing in \cite{TW}. It will be convenient to have the following general terminology regarding ``almost constant" functions.

\begin{definition}\label{def:deltaconstant}
Fix a function $\phi\colon V\to [0,1]$ and $\delta>0$.
\begin{enumerate}[$(1)$]
\item We say $\phi$ is \emph{$\delta$-constant} if for all $v,v'\in V$, $\phi(v)\approx_{\delta}\phi(v')$. 
\item Given $W\seq V$, we say $\phi$ is \emph{$\delta$-constant on $W$} if $\phi|_W$ is $\delta$-constant.
\item Given $\e>0$, we say $\phi$ is \emph{$\e$-almost $\delta$-constant} if there is some $W\seq V$ such that $|W|>(1-\e)|V|$ and $\phi$ is $\delta$-constant on $W$.
\item Given $\e>0$ and $W\seq V$, we say $\phi$ is \emph{$\e$-almost $\delta$-constant on $W$} if $\phi|_W$ is $\e$-almost $\delta$-constant.
\end{enumerate}
\end{definition}

 We will need the following separation result for functions that are not almost constant.

\begin{proposition}\label{prop:separation}
Fix $\delta,\e>0$ and suppose $\phi\colon X\to [0,1]$ is not $2\e$-almost $\delta$-constant. Then there is some $r\in [0,1]$ such that 
\[
|\{x\in X:\phi(x)\leq r\}|\geq \e|X|\quad\text{and}\quad|\{x\in X:\phi(x)\geq  r+\delta\}|\geq\e|X|.
\]
\end{proposition}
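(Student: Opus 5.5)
We argue by contrapositive, using quantiles of $\phi$. Let $n=|X|$ and list the values of $\phi$ in nondecreasing order as $a_1\leq a_2\leq\cdots\leq a_n$. Set $p=\lceil \e n\rceil$ and $q=n-p+1$. Then at least $p\geq \e n$ elements satisfy $\phi(x)\leq a_p$, and at least $n-q+1=p\geq\e n$ elements satisfy $\phi(x)\geq a_q$.

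Take $r=a_p$. If $a_q\geq a_p+\delta$, then $\{x:\phi(x)\geq r+\delta\}$ contains every $x$ with $\phi(x)\geq a_q$. So both sets in the conclusion have size at least $\e|X|$, and we are done.

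Now suppose instead that $a_q<a_p+\delta$. (This covers the case $q\leq p$, where $a_q\leq a_p$.) We show this forces $\phi$ to be $2\e$-almost $\delta$-constant, contradicting the hypothesis.

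\begin{itemize}
\item If $p<q$, let $W$ consist of the elements realizing $a_p,\ldots,a_q$ in the ordering. On $W$ every value lies in $[a_p,a_q]$, an interval of length less than $\delta$, so $\phi$ is $\delta$-constant on $W$. Moreover $|W|=q-p+1=n-2p+2$. Since $p<\e n+1$, we get $2p-2<2\e n$, hence $|W|>(1-2\e)n$.
\item If $q\leq p$, then $2p\geq n+1$, so $\e n>p-1\geq (n-1)/2$. Take $W$ to be a single point. Then $|W|=1>(1-2\e)n$, because $(1-2\e)n<1$. A single point is trivially $\delta$-constant.
\end{itemize}

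In either case $\phi$ is $2\e$-almost $\delta$-constant, which is the contradiction we wanted.

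The only real subtlety is the counting in the last step: the strict inequality $|W|>(1-2\e)|X|$ required by Definition~\ref{def:deltaconstant}, together with the ceiling in $p$. We handle it by using $p<\e n+1$, which gives $2p-2<2\e n$. If $2\e\geq 1$, the degenerate case is immediate, since any nonempty $W$ works; this is covered by the second bullet.
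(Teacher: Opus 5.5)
Your proof is correct and is essentially the paper's argument: the same choice of $r$ as the value at position $\lceil \e|X|\rceil$ in sorted order, with the contradiction coming from a window of values of width less than $\delta$ that contains more than $(1-2\e)|X|$ points. The paper phrases this as a direct count $|U|+|V|\geq N-m+1$ with $|U|\leq(1-2\e)N$, which avoids your case split between $p<q$ and $q\leq p$. One presentational point: $a_p$ is undefined when $\e>1$, so the case $2\e\geq 1$ should be dismissed at the outset, as the paper does, rather than being ``covered by the second bullet.''
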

\begin{proof}
Note that our assumptions imply $\e<\frac{1}{2}$, since any function from $X$ to $[0,1]$ is trivially $2\e'$-almost $\delta$-constant, for any $\e'>1/2$.   

 Let $N=|X|$. Without loss of generality, we may assume $X=[N]$. After relabeling, we may further assume $\phi$ is  weakly increasing. 
Set $m=\lceil \e N\rceil$, and note that $m\in [N]$ since $0<\e< \frac{1}{2}$. Let $r=\phi(m)$. Then for all $i\in [m]$, we have $\phi(i)\leq \phi(m)=r$, and hence
 \begin{equation}\label{eq:sep1bound}
 |\{i\in [N]:\phi(i)\leq r\}|\geq m\geq \e N.
 \end{equation}
 
 Now define 
 \[
 U=\{i\in [N]:r\leq \phi(i)< r+\delta\}\quad\text{and}\quad V=\{i\in [N]:\phi(i)\geq r+\delta\}.
 \] 
Note that $U$ and $V$ are disjoint. Since $\phi$ is weakly increasing and $\phi(m)=r$, we have $\{m,m+1,\ldots,N\}\seq  U\cup V$. Thus $|U|+|V|\geq N-m+1$. Also,  $|U|\leq (1-2\e)N$ since otherwise $\phi$ would be $2\e$-almost $\delta$-constant. These two inequalities yield
\begin{equation}\label{eq:sep2bound}
|V|\geq N-m+1-|U|\geq 2\e N-m+1\geq\e N,
\end{equation}
where the final inequality (equivalent to $m\leq \e N+1$) follows by definition of $m$. The result now follows from (\ref{eq:sep1bound}) and (\ref{eq:sep2bound}). 
\end{proof}

Now we define good sets for binary functions. The reader should first recall  our notation for fiber functions (item \ref{not:fibers} in Subsection \ref{notation}). 

\begin{definition}\label{def:fullgood}
Fix $\delta,\e>0$ and a function $f\colon X\times Y\to [0,1]$. We say that $A\seq X$ is \textit{$(\delta,\e)$-good with respect to $f$} if for all $y\in Y$, $f_y$ is $\e$-almost $\delta$-constant on $A$.
\end{definition}

The previous definition generalizes the notion of  $\e$-good sets in graphs, first defined in \cite{MS}. Indeed, given a graph $(V,E)$,  one can check that a subset $A\seq V$ is $\e$-good in the sense of \cite{MS} if and only if it is $(\delta,\e)$-good with respect to $\boldsymbol{1}_E$ for some/any $\delta<1$. As we will show in Section \ref{sec:mainproof1}, the significance of good sets in stable functions will be directly analogous to the graph setting. In particular, the proof of our main result on stable regularity with definable partitions (Theorem \ref{thm:1c}), will be entirely based on partitions consisting of good sets. 

On the other hand, for our main result on stable regularity with equipartitions (Theorem \ref{thm:1ceq}), we will need to work with a weaker notion of goodness in order to avoid nontrivial quantitative obstacles (which do not arise in the discrete setting and will be discussed further in Appendix \ref{sec:appendixB}).  To motivate this definition, note that a set $A\seq X$ is $(\delta,\e)$-good with respect to $f\colon X\times Y\to[0,1]$ if and only if there is a relation $E\seq A\times Y$ such that, for all $y\in Y$, $f_y$ is $\delta$-constant on $A\backslash E(A,y)$ and $|E(A,y)|<\e|A|$ (recall notation \ref{not:levels} in Subsection \ref{notation}). In other words, a good set $A\seq X$ is associated to a binary relation on $A\times Y$ all of whose $Y$-neighborhoods are sparse. The next definition weakens this by only requiring sparse neighborhoods on average.

\begin{definition}\label{def:good}
Fix $\delta,\e>0$ and $f\colon X\times Y\to [0,1]$. We say that $A\seq X$ is \textit{weakly $(\delta,\epsilon)$-good with respect to $f$} if there is some $E\seq A\times Y$ such that $|E|<\e|A||Y|$ and, for all $y\in Y$, $f_y$ is $\delta$-constant on $A\backslash E(A,y)$.
\end{definition}

The main result of this section, Lemma \ref{lem:twosticks} below, says that weakly good pairs are always almost constant (in the sense of Definition \ref{def:almostconstant}).   This is a function-theoretic extension of Lemma 5.9 from \cite{TW} (see Remark \ref{rem:TW}). We first clarify our notion of (weakly) good pairs. In a slight abuse of terminology, given $f\colon X\times Y\to [0,1]$, we will call a set $B\seq Y$ \emph{(weakly) $(\delta,\e)$-good with respect to $f$} if it is (weakly) $(\delta,\e)$-good with respect to $f^{opp}$. 

 \begin{definition}\label{def:wgpair}
Fix $\delta_1,\delta_2,\e>0$ and $f\colon X\times Y\to [0,1]$. Given $A\seq X$ and $B\seq Y$, we say that the pair $(A,B)$ is \emph{(weakly) $(\delta_1,\delta_2;\e)$-good with respect to  $f$} if $A$ is (weakly) $(\delta_1,\e)$-good with respect to $f|_{A\times B}$ and $B$ is (weakly) $(\delta_2,\e)$-good with respect to $f|_{A\times B}$.
\end{definition}

We now prove the main result of this section. 

\begin{lemma}[Analytic Symmetry Lemma]\label{lem:twosticks}
Fix $\delta_1,\delta_2,\e>0$ and $f\colon X\times Y\to [0,1]$. Suppose $A\seq X$ and $B\seq Y$ are such that $(A,B)$ is weakly $(\delta_1,\delta_2;\e)$-good with respect to $f$.  Then $(A,B)$ is  $4\e$-almost $(\delta_1+\delta_2)$-constant with respect to $f$. 
\end{lemma}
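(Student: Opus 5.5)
The plan is to argue by contradiction using the separation result, Proposition \ref{prop:separation}, applied to $f$ viewed as a single function on $A\times B$. Let $E_1\seq A\times B$ witness that $A$ is weakly $(\delta_1,\e)$-good for $f|_{A\times B}$, so that each column $f_y$, for $y\in B$, is $\delta_1$-constant off $E_1(A,y)$. Let $E_2\seq A\times B$ (written as pairs $(x,y)$) witness that $B$ is weakly $(\delta_2,\e)$-good, so that each row $f(x,\cdot)$ is $\delta_2$-constant off $E_2(x,B)$. Call $(x,y)$ \emph{good} if $(x,y)\notin E_1\cup E_2$. Note that $|E_1\cup E_2|<2\e|A||B|$.

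Suppose $(A,B)$ is not $4\e$-almost $(\delta_1+\delta_2)$-constant. Proposition \ref{prop:separation}, with $2\e$ in place of $\e$, then yields $r$ such that the sets $L=\{f\leq r\}$ and $H=\{f\geq r+\delta_1+\delta_2\}$ each have size at least $2\e|A||B|$.

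The key mechanism is the following. Let $R'$ be the set of rows $x$ that contain a point of $L$ outside $E_2$. Row $\delta_2$-constancy forces all non-$E_2$ values in such a row to be $<r+\delta_2$. Let $C'$ be the set of columns $y$ that contain a point of $H$ outside $E_1$. Column $\delta_1$-constancy forces all non-$E_1$ values in such a column to be $>r+\delta_2$. Consequently any good pair in $R'\times C'$ would have a value both $<r+\delta_2$ and $>r+\delta_2$, which is impossible. Hence $R'\times C'\seq E_1\cup E_2$.

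On the counting side, $L\setminus E_2\seq R'\times B$ and $H\setminus E_1\seq A\times C'$. Together with $|E_i|<\e|A||B|$, this gives $|R'|>\e|A|$ and $|C'|>\e|B|$.

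The main obstacle is that this only gives $|R'||C'|>\e^2|A||B|$, which does not contradict $|E_1\cup E_2|<2\e|A||B|$. The naive argument loses a square root. To fix this I would replace the single threshold $r$ by a threshold-sweeping argument. For a parameter $s$, classify each row as \emph{low} (all non-$E_2$ values $<s$) or \emph{high} (all non-$E_2$ values $>s-\delta_2$); every row is of at least one type. Classify each column as \emph{low} (non-$E_1$ values $<s-\delta_2$) or \emph{high} (non-$E_1$ values $>s-\delta_1-\delta_2$). Then every good pair in (high row)$\times$(low column) is impossible. On the remaining three blocks, the good values lie in $(s-\delta_1-\delta_2,s)$, except on the block (low row)$\times$(low column), which only has an upper bound, and the block (high row)$\times$(high column), which only has a lower bound.

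I would choose $s$ at the critical point where the sets of high rows and low columns change as $s$ increases, which happens monotonically in $s$. The goal is to make one of the two ``unbounded'' blocks have measure controlled linearly by $|E_1|+|E_2|$, and not by a product. I would then take the witnessing set to be the good pairs in the bounded region. I expect the accounting at this step, which is where the constant $4\e$ (rather than something like $\sqrt{\e}$) has to come from, to be the hardest part. If the sweep does not close, my fallback is to fix a row $a_0$ and a column $b_0$ chosen by averaging so that $|E_2(a_0,B)|$ and $|E_1(A,b_0)|$ are below average. I would then compare every good $(x,y)$ to the interval of values of the row $a_0$, rather than to a single value, so as to avoid paying $\delta_1$ twice.
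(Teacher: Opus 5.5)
Your single-rectangle mechanism is correct: indeed $R'\times C'\subseteq E_1\cup E_2$. You also correctly diagnose that bounding $|R'|$ and $|C'|$ through the strips $R'\times B\supseteq L\setminus E_2$ and $A\times C'\supseteq H\setminus E_1$ loses a square root. But the proposal stops at exactly the step that has to produce $4\e$, so it does not prove the lemma.

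\begin{itemize}
\item The threshold sweep is only sketched. There is no choice of $s$ and no accounting showing that the one-sided blocks are controlled linearly by $|E_1|+|E_2|$.
\item The fallback, as described, places a good value $f(x,y)$ within $\delta_1$ of $f(a_0,y)$, which itself ranges over an interval of width $<\delta_2$. That only gives width $2\delta_1+\delta_2$, and you give no mechanism for removing the second $\delta_1$.
\end{itemize}

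The missing idea is to use your mechanism twice, and to compare the off-diagonal rectangles with the diagonal ones rather than with marginals.

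Set $Z=E_1\cup E_2$, $D=(A\times B)\setminus Z$ and $\alpha=|Z|/|A||B|<2\e$. Apply Proposition~\ref{prop:separation} to $f|_D$, not to $f$ on $A\times B$, with parameter $2\e^*$. Here $\e^*=(2\e-\alpha/2)/(1-\alpha)$ is chosen so that $\alpha|A||B|+2\e^*|D|=4\e|A||B|$. If the lemma fails, this gives low and high sets $R_0,R_1\subseteq D$ of good points with $|R_i|\geq\e^*|D|=(2\e-\alpha/2)|A||B|$.

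Let $V_i$ and $W_i$ be the rows and columns meeting $R_i$. Then:
\begin{itemize}
\item Row constancy gives $V_0\cap V_1=\emptyset$, and column constancy gives $W_0\cap W_1=\emptyset$.
\item Your argument gives $V_0\times W_1\subseteq Z$, and its mirror image gives $V_1\times W_0\subseteq Z$.
\item Trivially, $R_i\subseteq V_i\times W_i$.
\end{itemize}
Hence, by AM--GM,
\[
\alpha|A||B|=|Z|\geq |V_0||W_1|+|V_1||W_0|\geq 2\sqrt{|V_0||W_0|\,|V_1||W_1|}\geq 2\sqrt{|R_0||R_1|}\geq (4\e-\alpha)|A||B|,
\]
so $\alpha\geq 2\e$, a contradiction.

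The restriction to $D$ matters as well. If you separate on all of $A\times B$ as you propose, the parts of $L$ and $H$ lying inside $Z$ are lost. In the worst case the same two-rectangle count then only yields $\alpha\geq 4(\sqrt{2}-1)\e$, which does not contradict $\alpha<2\e$.
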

\begin{proof}
Note that we may assume $\e\leq  \frac{1}{4}$ since otherwise the conclusion holds trivially. 

Fix $E\seq A\times B$ witnessing that $A$ is weakly $(\delta_1,\e)$-good with respect to $f|_{A\times B}$, and fix $F\seq A\times B$ witnessing that $B$ is weakly $(\delta_2,\e)$-good with respect to $f|_{A\times B}$. Define $Z=E\cup F$. Since  $|E|,|F|<\e|A||B|$, we have
\begin{equation}\label{eq:Ebound}
|Z| <2\e|A||B|.
\end{equation}
 Set $\alpha=|Z|/|A||B|$ and define
\[
\e^*=\frac{2\e-\frac{\alpha}{2}}{1-\alpha}.
\]
Note that $\e^*>0$ is well-defined since $\alpha<2\e$ by (\ref{eq:Ebound}) and $2\e\leq 1$. Set $\delta^*=\delta_1+\delta_2$.

Let $D=(A\times B)\backslash Z$. 
We claim it suffices to show that $f$ is $2\e^*$-almost $\delta^*$-constant on $D$. Indeed, if $f$ is $2\e^*$-almost $\delta^*$-constant on $D$, then there is a set $U\subseteq D$ satisfying $|U|<2\e^*|D|$ such that $f$ is $\delta^*$-constant on $D\setminus U$.  This implies $f$ is $4\e$-almost $\delta^*$-constant (our desired conclusion) because $A\times B=D\cup (Z\cup U)$ and 
\[
|Z\cup U|<\alpha|A||B|+2\e^*|D|=\alpha|A||B|+2\e^*(1-\alpha)|A||B|=4\e|A||B|,
\]
where the final equality is by definition of $\e^*$.  Therefore, the rest of the proof will be devoted to proving $f$ is $2\e^*$-almost $\delta^*$-constant on $D$.

Toward a contradiction, suppose $f$ is not $2\e^*$-almost $\delta^*$-constant on $D$. By Proposition \ref{prop:separation}, there is some $r\in [0,1]$ such that, setting
\[
R_0=\{(x,y)\in D:f(x,y)\leq r\}\quad\text{and}\quad R_1=\{(x,y)\in D:f(x,y)\geq r+\delta^*\},
\]
we have 
\begin{equation}\label{eq:PQlowers}
|R_i|\geq \e^*|D|\text{ for $i\in\{0,1\}$.}
\end{equation}

Now, for $i\in\{0,1\}$, define
\[
V_i=\{x\in A:R_i(x,B)\neq\emptyset\}\quad\text{and}\quad W_i=\{y\in B:R_i(A,y)\neq\emptyset\}.
\]

\noindent\textit{Claim 1.} $V_0\cap V_1=\emptyset$ and $W_0\cap W_1=\emptyset$. 

\noindent\textit{Proof.} We prove $W_0\cap W_1=\emptyset$. The proof that $V_0\cap V_1= \emptyset$ is symmetric. 

Suppose $y\in W_0\cap W_1$. Then there are $x_0,x_1\in A$ such that $(x_0,y)\in R_0$ and $(x_1,y)\in R_1$, which implies   $f(x_0,y)\leq r$ and $f(x_1,y)\geq r+\delta^*$. On the other hand, since $R_0,R_1\seq D=(A\times B)\backslash Z$, we have $(x_0,y),(x_1,y)\not\in Z$, hence $(x_0,y),(x_1,y)\not\in E$. Since $f_y$ is $\delta_1$-constant on $A\backslash E(A,y)$, this implies $f(x_0,y)\approx_{\delta_1} f(x_1,y)$, which is a contradiction.\clqed\medskip

\noindent\textit{Claim 2.} $V_0\times W_1\seq Z$ and $V_1\times W_0\seq Z$. 

\noindent\textit{Proof.} We prove $V_0\times W_1\seq Z$. The proof that $V_1\times W_0\seq Z$ is symmetric.  

Fix $(x,y)\in V_0\times W_1$ and, toward a contradiction, suppose $(x,y)\not\in Z$. Since $x\in V_0$ and $y\in W_1$, there are $y_0\in B$ and $x_1\in A$ such that $(x,y_0)\in R_0$ and $(x_1,y)\in R_1$, hence $f(x,y_0)\leq r$ and $f(x_1,y)\geq r+\delta^*$.

On the other hand, since $R_0,R_1\seq D=(A\times B)\backslash Z$, we have $(x,y),(x,y_0),(x_1,y)\not\in Z$. As in the proof of Claim 1, this implies $f(x,y)\approx_{\delta_1} f(x_1,y)$ and $f(x,y)\approx_{\delta_2} f(x,y_0)$. By the triangle inequality, $f(x,y_0)\approx_{\delta^*}f(x_1,y)$, which is a contradiction.\clqed\medskip

Finally, for each $i\in\{0,1\}$, set $v_i=|V_i|$ and $w_i=|W_i|$. 
Combining Claims 1 and 2,  the sets $V_0\times W_1$ and $V_1\times W_0$ are disjoint subsets of $Z$, and hence
\begin{equation}\label{eq:Zlowerbound}
|Z|\geq v_0w_1+v_1w_0\geq 2\sqrt{v_0w_0v_1w_1},
\end{equation}
where the last inequality is immediate from the AM-GM inequality.

On the other hand, note that $R_i\seq V_i\times W_i$ for each $i\in\{0,1\}$. Hence (\ref{eq:PQlowers}) implies
\begin{equation}\label{eq:VtWt}
v_0w_0v_1w_1\geq |R_0||R_1|\geq (\e^*|D|)^2=((2\e-{\textstyle\frac{\alpha}{2}})|A||B|)^2,
\end{equation}
where the equality uses $|D|=(1-\alpha)|A||B|$ and the definition of $\e^*$. 
 Now (\ref{eq:Zlowerbound}) and (\ref{eq:VtWt}) together imply $|Z|\geq (4\e-\alpha)|A||B|$. With (\ref{eq:Ebound}) this implies $4\e-\alpha<2\e$, which contradicts $\alpha<2\e$.
\end{proof}

The following example shows that the term $\delta_1+\delta_2$  in Lemma \ref{lem:twosticks} cannot be improved in general.  The example will be a symmetric function of the form $f\colon X\times X\rightarrow [0,1]$. Thus we note that in this case, $X$ is (weakly) $(\delta,\e)$-good with respect to $f$ if and only if $X$ is (weakly) $(\delta,\e)$-good with respect to $f^{opp}$.

\begin{example}\label{ex:twosticks}
Fix $0<\delta\leq\frac{1}{4}$ and $0<\gamma<\delta$. Given an even integer $n\geq 2$, we construct a symmetric function $f\colon [n]\times [n]\to [0,1]$ such that $[n]$ is   $(\delta,\e)$-good with respect to $f$ for any $\epsilon>0$, but $f$ is not $\e$-almost $2\gamma$-constant when $\e<\frac{1}{4}$. In particular, define $f\colon [n]\times [n]\to [0,1]$ such that 
\[
f(x,y)=\gamma((x~\textnormal{mod}~2)+(y~\textnormal{mod}~2)).
\]
Given $i\in[n]$, if $i$ is even then the fiber $f_i$ takes values in $\{0,\gamma\}$, while if $i$ is odd then $f_i$ takes values in $\{\gamma,2\gamma\}$. Hence all fibers are $\delta$-constant, which means that for any $\e>0$, $[n]$ is $(\delta,\e)$-good with respect to $f$. Now suppose $\epsilon<\frac{1}{4}$ and fix $E\seq [n]\times [n]$ with $|E|>(1-\e)n^2$. Since $(1-\e)n^2>\frac{3}{4}n^2$, $E$ must contain points $(x,y)$ and $(x',y')$ where $x$ and $y$ are both even and $x'$ and $y'$ are both odd. So $f(x,y)=0$ and $f(x',y')=2\gamma$, which shows that $f$ is not $2\gamma$-constant on $E$.
\end{example}

\begin{remark}\label{rem:TW}
Lemma \ref{lem:twosticks} generalizes and strengthens \cite[Lemma 5.9]{TW}, which provides a related symmetry lemma for ``almost good" sets in  bipartite graphs. This notion extends to functions by defining  $A\seq X$ to be $\gamma$-almost $(\delta,\e)$-good for $f\colon X\times Y\to [0,1]$, if there is some $Y'\seq Y$ such that $|Y'|\geq (1-\gamma)|Y|$ and $A$ is $(\delta,\e)$-good with respect to $f|_{X\times Y'}$. Under this translation, \cite[Lemma 5.9]{TW} shows that, given $E\seq X\times Y$, if   $X$ and $Y$ are both $\e$-almost $(1,\e)$-good with respect to $\boldsymbol{1}_E$, then the pair $(X,Y)$ is $2\sqrt{\e}$-homogeneous with respect to $E$ (i.e.,  $\boldsymbol{1}_E$ is $2\sqrt{\e}$-almost $1$-constant). It is easy to show that a $\gamma$-almost $(\delta,\e)$-good set (for an arbitrary function) is weakly $(\delta,\gamma+\e)$-good. Therefore Lemma \ref{lem:twosticks} extends \cite[Lemma 5.9]{TW} to functions and improves the $2\sqrt{\e}$ term in the latter result to $8\e$.  

Using a standard Markov-type averaging argument, one can also show that a weakly $(\delta,\e)$-good set is $\gamma$-almost $(\delta,\e')$-good whenever $\gamma\e'=\e$. Thus weakly good sets are qualitatively equivalent to almost good sets. However, weakly good sets have quantitative advantages leading to better bounds in our results.
\end{remark}

\section{Stable regularity with definable partitions}\label{sec:mainproof1}

In this section we prove Theorem \ref{thm:maintree}$(a)$ (regularity with definable partitions for functions omitting trees). Given $f\colon X\times Y\to [0,1]$, the rough strategy is to show that if $f$ and $f^{opp}$ omit trees of bounded length, then $X$ and $Y$ can be partitioned into good sets, which then yields almost constant pairs via the symmetry lemma (Lemma \ref{lem:twosticks}). The first step is to show that one can find large good sets in functions omitting trees. This is also where ``definability" of the sets in the partition becomes relevant.  So we now state the definition of $f$-definability, following  \cite{CPC}.

\begin{definition}\label{def:fdef}
Given $f:X\times Y\rightarrow [0,1]$, we say a subset $X'\subseteq X$ is \emph{$f$-definable} if it is a (finite) Boolean combination of sets of the form
\[
\{x\in X: f(x,b)\geq r\}\quad\text{or}\quad \{x\in X: f(x,b)\leq r\}
\]
where $b\in Y$ and $r\in[0,1]$. An \emph{$f$-definable} subset of $Y$ is defined analogously.
A partition of $X$ or $Y$ is \emph{$f$-definable} if all the sets in the partition are $f$-definable.
\end{definition}

 It is  worth pointing out that this notion does not directly correspond to definable sets in continuous logic, but is instead closer to what Anderson \cite{anderson} calls ``constructible sets" in the context of continuous logic. We should also note that  definability of the partition in the stable regularity lemma for graphs (Theorem \ref{thm:MS}) is not explicitly discussed by Malliaris and Shelah \cite{MS}. However, it was later shown by Chernikov and Starchenko \cite{ChStNIP} that the ``excellent" sets  in \cite[Claim 5.4]{MS} can be found definably via a construction with $\e$-approximations (see \cite[Lemma 4.10]{ChStNIP}). We will sidestep this issue by working only with our function-theoretic analogue of good sets, where definability arises  automatically.
It is also important to note that the notion of $f$-definability becomes more robust (especially in the finite setting) if one  bounds the ``complexity" of the Boolean combination. The constructive nature of our proofs allows for this, but we have not made it explicit in the statements of results in order to avoid defining a precise notion of complexity. Instead, some brief comments are given in Remark \ref{rem:complexity} of the appendix. 

We are now ready to prove that functions omitting trees contain large good sets. To streamline the argument, we will use the following notion of rank.

\begin{definition}
Given  $f\colon X\times Y\rightarrow [0,1]$, $\delta>0$, and $B\subseteq Y$, the \emph{$\delta$-rank of  $B$ with respect to $f$} is the maximum integer $t\geq 1$ such that $f|_{X\times B}$ admits a $(t,\delta)$-tree. If no such $t$ exists, then the $\delta$-rank of $B$ is $0$. 
\end{definition}

\begin{lemma}\label{lem:findgood}
Fix $t\geq 0$ and $\e,\delta>0$ with $\e<1$. Let $f\colon X\times Y\to [0,1]$ be a function, and suppose $Y'\subseteq Y$ is a nonempty set with $\delta$-rank at most $t$.   Then there is a subset $Y''\subseteq Y'$ which is $(\delta,2\e)$-good with respect to $f$ and has size at least  $\epsilon^t|Y'|$.  Moreover,  if $Y'$ is $f$-definable, so is $Y''$.    
\end{lemma}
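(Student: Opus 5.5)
Induction on $t$. Note that $Y''\subseteq Y$ being $(\delta,2\e)$-good with respect to $f$ means, in the convention of the paper, that for every $x\in X$ the fiber $f(x,\cdot)$ is $2\e$-almost $\delta$-constant on $Y''$.

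\emph{Base case $t=0$.} Here $f|_{X\times Y'}$ admits no $(1,\delta)$-tree. A $(1,\delta)$-tree consists of a single $x_{\emp}$, two points $y_0,y_1\in Y'$, and $r$ with $f(x_{\emp},y_0)\leq r$ and $f(x_{\emp},y_1)\geq r+\delta$. If no such tree exists, then for every $x$ the function $f(x,\cdot)$ is $\delta$-constant on $Y'$ (take $r=\min$ over $Y'$). So $Y''=Y'$ works, with size $|Y'|=\e^0|Y'|$, and definability is trivial.

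\emph{Inductive step.} Assume the result for rank at most $t-1$, and let $Y'$ have $\delta$-rank at most $t\geq 1$. If $Y'$ is already $(\delta,2\e)$-good, take $Y''=Y'$; this suffices since $\e^t\leq 1$. Otherwise there is some $x\in X$ such that $f(x,\cdot)$ is not $2\e$-almost $\delta$-constant on $Y'$. Proposition \ref{prop:separation} then gives $r\in[0,1]$ such that
\[
Y_0=\{y\in Y':f(x,y)\leq r\}\quad\text{and}\quad Y_1=\{y\in Y':f(x,y)\geq r+\delta\}
\]
both have size at least $\e|Y'|$. Each $Y_i$ is $f$-definable whenever $Y'$ is, being an intersection of $Y'$ with a set of the form in Definition \ref{def:fdef}.

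The key claim is that one of $Y_0,Y_1$ has $\delta$-rank at most $t-1$. Suppose instead both admit $(t,\delta)$-trees, say $(x^i_\sigma,y^i_\eta,r^i_\sigma)$ for $i\in\{0,1\}$. Form a $(t+1,\delta)$-tree on $Y'$ as follows:
\begin{itemize}
\item put $x_{\emp}=x$ and $r_{\emp}=r$;
\item for $\sigma$ of length less than $t$, put $x_{i\conc\sigma}=x^i_\sigma$ and $r_{i\conc\sigma}=r^i_\sigma$;
\item for $\eta$ of length $t$, put $y_{i\conc\eta}=y^i_\eta$.
\end{itemize}
The root condition holds because $y_{0\conc\eta}\in Y_0$ and $y_{1\conc\eta}\in Y_1$. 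The conditions at deeper nodes are inherited from the two subtrees. This contradicts rank at most $t$.

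Choose $i$ with $Y_i$ of rank at most $t-1$. The induction hypothesis, applied with $Y'$ replaced by $Y_i$, gives a $(\delta,2\e)$-good set $Y''\subseteq Y_i$, definable if $Y_i$ is, with
\[
|Y''|\geq \e^{t-1}|Y_i|\geq \e^t|Y'|.
\]

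The main points needing care are the tree-gluing step, in particular matching the indexing conventions of Definition \ref{def:eptree1} including the case $t=1$, and noting that rank $t-1=0$ still fits the base case. This explains the factor $2\e$: Proposition \ref{prop:separation} requires failure of $2\e$-almost constancy to produce $\e$-sized pieces.
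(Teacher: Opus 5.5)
Your proposal is correct and matches the paper's proof essentially step for step. Both use induction on $t$, with the base case coming from the absence of a $(1,\delta)$-tree. In the inductive step, both apply Proposition~\ref{prop:separation} to split $Y'$ into $Y_0,Y_1$ of size at least $\e|Y'|$, glue two $(t,\delta)$-trees under the root $x$ to show one piece has $\delta$-rank at most $t-1$, and then recurse with the same definability bookkeeping.
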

\begin{proof}
We prove this by induction on $t\geq 0$.

\textit{Base Case:} Suppose $Y'\subseteq Y$ has  $\delta$-rank $0$, i.e., $f|_{X\times Y'}$ omits $(1,\delta)$-trees. We show that for any $a\in X$, $f^{opp}_a|_{Y'}$ is $\delta$-constant. In particular, this will imply $Y'$ is $(\delta,2\e)$-good and hence we may take $Y''=Y'$. Toward a contradiction, suppose there are $a\in X$ and $y_0,y_1\in Y'$ such that $|f(a,y_0)-f(a,y_1)|\geq\delta$. Without loss of generality, let us assume $f(a,y_0)<f(a,y_1)$. We  now have a $(1,\delta)$-tree for $f|_{X\times Y'}$ using $x_{\semp}=a$, $y_0$, $y_1$, and $r_{\semp}=f(a,y_0)$, which is a contradiction.

\textit{Induction Step:} Fix $t\geq 1$ and assume  the claim holds for $t-1$. Suppose $Y'\subseteq Y$ has $\delta$-rank at most $t$. If $Y'$ is itself $(\delta,2\e)$-good, then taking $Y''=Y'$, we are done.  So assume now $Y'$ is not $(\delta,2\e)$-good. By definition, this means  there is some $x\in X$ such that $f^{opp}_x|_{Y'}$ is not $2\e$-almost $\delta$-constant. By Proposition \ref{prop:separation}, there is some $r\in [0,1]$ such that both 
$$
Y_0=\{y\in Y': f(x,y)\leq r\}\text{ and }Y_1=\{y\in Y': f(x,y)\geq r+\delta\}
$$
have size at least $\epsilon|Y'|$. 

We claim that  some $Z\in\{Y_0,Y_1\}$ has $\delta$-rank at most $t-1$. Indeed, if not, then for each $i\in \{0,1\}$, $f|_{X\times Y_i}$ admits a $(t,\delta)$-tree, say given by sequences
\[
(x^i_\sigma:\sigma\in \bn{t})\text{ in } X,\quad (y^i_\eta:\eta\in \bl{t})\text{ in } Y_i,\quad\text{and}\quad(r^i_\sigma:\sigma\in \bn{t})\text{ in }[0,1].
\]
By choice of $Y_0$ and $Y_1$, this yields a $(t+1,\delta)$-tree for $f|_{X\times (Y_0\cup Y_1)}$ consisting of the following sequences:
\begin{enumerate}[\hspace{5pt}$\ast$]
\item set $x_{\semp}=x$ and, for $i\in\{0,1\}$ and $\sigma\in \bn{t}$, set $x_{i\sconc\sigma}=x^i_\sigma$, 
\item for $i\in\{0,1\}$ and $\eta\in \bl{t}$, set $y_{i\sconc\eta}=y^i_\eta$, and  
\item set $r_{\semp}=r$ and, for $i\in\{0,1\}$ and $\sigma\in \bn{t}$, set $r_{i\sconc\sigma}=r^i_\sigma$.
\end{enumerate}
Since $Y_0\cup Y_1\seq Y'$, this contradicts the assumption that $Y'$ has $\delta$-rank at most $t$.

Now fix $Z\in\{Y_0,Y_1\}$ with $\delta$-rank at most $t-1$.   By induction, there is a $(\delta,2\e)$-good set $Y''\subseteq Z$ with $|Y''|\geq \e^{t-1}|Z|\geq \e^t|Y'|$, and such that if $Z$ is $f$-definable then so is $Y''$.  Note that if $Y'$ is $f$-definable, then so is $Z$, and thus so is $Y''$. This finishes the proof.
\end{proof}

We can now iterate Lemma \ref{lem:findgood} in the case where $f$ admits no $(t,\delta)$-tree to obtain a partition of most of $Y$ into good $f$-definable sets.

\begin{corollary}\label{cor:treeconstant}
Fix $t\geq 1$, and $\delta,\epsilon>0$ with  $\epsilon<1$. Suppose $f\colon X\times Y\to [0,1]$ omits $(t,\delta)$-trees.  
Then for some $s\leq \e^{-2t}$ there is a partition $Y=Y_0\cup Y_1\cup \ldots \cup Y_s$ into $f$-definable sets such that $|Y_0|< \e^2 |Y_1|$ and, for each $1\leq i\leq s$, $Y_i$ is $(\delta, 2\e)$-good with respect to $f$ and has size at least $\e^{2t}|Y|$.
\end{corollary}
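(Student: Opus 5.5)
The plan is to apply Lemma \ref{lem:findgood} greedily, each time to the part of $Y$ not yet covered, and to stop once that remainder is small. The first step is to check the rank hypothesis. Since $f$ omits $(t,\delta)$-trees, so does $f|_{X\times R}$ for every nonempty $R\seq Y$, because a tree for the restriction is a tree for $f$. So every nonempty $R\seq Y$ has $\delta$-rank at most $t-1$. Lemma \ref{lem:findgood}, applied with $t-1$ in place of $t$, then gives for any nonempty $f$-definable $R\seq Y$ an $f$-definable set $R'\seq R$ that is $(\delta,2\e)$-good with respect to $f$ and has $|R'|\geq \e^{t-1}|R|$.

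Next I run the iteration. Set $R_0=Y$, which is $f$-definable (for instance $Y=\{y\in Y: f(a,y)\geq 0\}$ for any $a\in X$). Given the remainder $R_{i-1}$, which will always be $f$-definable:
\begin{enumerate}[$(1)$]
\item If $|R_{i-1}|\geq \e^{t+1}|Y|$, choose $Y_i\seq R_{i-1}$ as above and set $R_i=R_{i-1}\setminus Y_i$. Since $f$-definable sets form a Boolean algebra, $R_i$ is again $f$-definable.
\item If $|R_{i-1}|<\e^{t+1}|Y|$, stop, and set $Y_0=R_{i-1}$ and $s=i-1$.
\end{enumerate}
Each chosen piece satisfies $|Y_i|\geq \e^{t-1}|R_{i-1}|\geq \e^{t-1}\e^{t+1}|Y|=\e^{2t}|Y|$. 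In particular each piece is nonempty, so the process terminates. The pieces are disjoint, so $s\,\e^{2t}|Y|\leq |Y|$, which gives $s\leq \e^{-2t}$. By construction, each $Y_i$ with $i\geq 1$ is $f$-definable and $(\delta,2\e)$-good with respect to $f$, and $Y_0$ is $f$-definable (possibly empty, as the conventions allow).

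The last step is the bound $|Y_0|<\e^2|Y_1|$, which is where the thresholds must be chosen compatibly. Since $\e<1$ and $t\geq 1$, we have $|R_0|=|Y|\geq\e^{t+1}|Y|$, so the first step is always taken and $s\geq 1$. Moreover, $Y_1$ is extracted from all of $Y$, so $|Y_1|\geq \e^{t-1}|Y|$. Hence
\[
|Y_0|<\e^{t+1}|Y|=\e^2\cdot\e^{t-1}|Y|\leq \e^2|Y_1|.
\]
The only delicate point is this choice of the stopping threshold $\e^{t+1}|Y|$. It has to be small enough that the leftover is below $\e^2|Y_1|$, yet large enough that every extracted piece has size at least $\e^{2t}|Y|$. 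The threshold $\e^{t+1}|Y|$ meets both requirements exactly. The rest of the argument is bookkeeping.
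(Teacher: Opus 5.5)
Your proposal is correct and follows the paper's proof: you greedily apply Lemma \ref{lem:findgood} (with rank bound $t-1$) to the $f$-definable remainder until that remainder is small. The only difference is cosmetic: the paper stops once the remainder has size below $\e^2|Y_1|$, whereas you stop below the absolute threshold $\e^{t+1}|Y|$. Since $|Y_1|\geq\e^{t-1}|Y|$, your threshold is at most the paper's, and both choices give pieces of size at least $\e^{2t}|Y|$ and $|Y_0|<\e^2|Y_1|$.
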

\begin{proof}
We inductively construct $Y_1,\ldots,Y_s$ as follows.

\textit{Step 1:} Note $Y$ is $f$-definable and has $\delta$-rank at most $t-1$.  Apply Lemma \ref{lem:findgood} to find an $f$-definable set $Y_1\subseteq Y$ of size at least $\e^{t-1}|Y|$ which is $(\delta,2\e)$-good with respect to $f$.    

\textit{Step $m+1$:} Suppose that we have constructed pairwise disjoint $f$-definable subsets $Y_1,\ldots, Y_m$  of $Y$, such that $|Y_1|\geq \e^{t-1}|Y|$ and, for each $1\leq i\leq m$, $Y_{i}$ is $(\delta,2\e)$-good and has  size at least $\e^{2t}|Y|$. Set $Y'=Y\backslash (Y_1\cup\ldots\cup Y_m)$.  If $|Y'|<\e^2|Y_1|$, set $s=m$, $Y_0=Y'$ and end the construction.  Otherwise, note $Y'$ has $\delta$-rank at most $t-1$ and is $f$-definable because $Y_1,\ldots,Y_m$ are $f$-definable. Consequently, by  Lemma \ref{lem:findgood} there is an $f$-definable set $Y_{m+1}\subseteq Y'$ which is $(\delta,2\e)$-good and of size at least $\e^{t-1}|Y'|\geq \e^{t+1}|Y_1|\geq \e^{2t}|Y|$, where the last inequality uses $|Y_1|\geq \e^{t-1}|Y|$.   

Clearly this process will stop after some $s\leq \e^{-2t}$ steps, which finishes the proof. 
\end{proof}

For later purposes, we state a slight reformulation of the previous corollary, which follows using a nearly identical proof.

\begin{corollary}\label{cor:treeconstant2}
Fix $t\geq 1$, and $\delta,\epsilon>0$ with  $\epsilon<1$. Suppose $f\colon X\times Y\to [0,1]$ omits $(t,\delta)$-trees. 
Then for some $s\leq \e^{-t}$ there is a partition $Y=Y_0\cup Y_1\cup \ldots \cup Y_s$ into $f$-definable sets such that $|Y_0|< \epsilon |Y|$, and such that for each $1\leq i\leq s$, $Y_i$ is $(\delta, 2\e)$-good with respect to $f$ and has size at least $\e^{t}|Y|$.
\end{corollary}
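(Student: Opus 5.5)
The plan is to repeat the greedy construction from the proof of Corollary \ref{cor:treeconstant}, changing only the stopping rule: we stop once the leftover set is small relative to $|Y|$ rather than relative to $|Y_1|$.

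We construct pairwise disjoint $f$-definable sets $Y_1,\ldots,Y_m$ by induction, maintaining that each is $(\delta,2\e)$-good with respect to $f$ and has size at least $\e^t|Y|$. Suppose $Y_1,\ldots,Y_m$ have been built (with $m\geq 0$), and set $Y'=Y\backslash(Y_1\cup\ldots\cup Y_m)$.

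If $|Y'|<\e|Y|$, we stop, setting $s=m$ and $Y_0=Y'$. Then $Y_0$ is $f$-definable, being a Boolean combination of $f$-definable sets; note that $Y_0$ may be empty, which the conventions permit.

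Otherwise $|Y'|\geq\e|Y|$, so $Y'$ is nonempty. It is also $f$-definable, since it is the complement of a finite union of $f$-definable sets. Because $f$ omits $(t,\delta)$-trees, so does $f|_{X\times Y'}$, so $Y'$ has $\delta$-rank at most $t-1$.

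We then apply Lemma \ref{lem:findgood} with rank parameter $t-1$. It yields an $f$-definable set $Y_{m+1}\seq Y'$ that is $(\delta,2\e)$-good and satisfies
\[
|Y_{m+1}|\geq \e^{t-1}|Y'|\geq \e^{t-1}\cdot\e|Y|=\e^t|Y|.
\]
This preserves the invariant.

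Termination and the bound on $s$ follow by counting. The sets $Y_1,\ldots,Y_s$ are disjoint subsets of $Y$, each of size at least $\e^t|Y|$. Hence $s\e^t|Y|\leq|Y|$, which gives $s\leq\e^{-t}$. In particular the process halts after at most $\e^{-t}$ steps, and at that point $|Y_0|<\e|Y|$, as required.

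The only point needing care is that Lemma \ref{lem:findgood} requires its input set to be nonempty and $f$-definable. Both conditions hold whenever we invoke it, since we only do so when $|Y'|\geq\e|Y|>0$. I expect no real obstacle here; the argument is a bookkeeping variant of the previous corollary.
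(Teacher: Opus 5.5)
Your proposal is correct and is exactly the argument the paper has in mind when it says the result follows by a nearly identical proof to Corollary~\ref{cor:treeconstant}. You stop once $|Y'|<\epsilon|Y|$; otherwise you apply Lemma~\ref{lem:findgood} with rank bound $t-1$ to get $|Y_{m+1}|\geq\epsilon^{t-1}|Y'|\geq\epsilon^{t}|Y|$, and disjointness gives $s\leq\epsilon^{-t}$.
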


We now use Corollary \ref{cor:treeconstant}, along with the analytic symmetry lemma (Lemma \ref{lem:twosticks}), to prove Theorem \ref{thm:maintree}$(a)$.

\begin{proof}[\textnormal{\textbf{Proof of Theorem \ref{thm:maintree}$(a)$}}]
Fix $t,t'\geq 1$ and $\delta,\delta',\e>0$ with $\e<1$.  Suppose $f\colon X\times Y\rightarrow [0,1]$ is  such that $f$ omits $(t,\delta)$-trees and $f^{opp}$ omits $(t',\delta')$-trees.
Apply Corollary \ref{cor:treeconstant} to $f$ (resp., $f^{opp}$) with parameters $t$ (resp. $t'$), $\delta$ (resp. $\delta'$), and $\epsilon_*=\frac{\epsilon}{9}$ to obtain $s_1\leq (9/\epsilon)^{2t'}$, $s_2\leq (9/\epsilon)^{2t}$, and $f$-definable partitions 
 \[
 X=X'_0\cup X'_1\cup\ldots \cup X'_{s_1}\text{ and }Y=Y'_0\cup Y'_1\cup\ldots \cup Y'_{s_2}
 \]
 satisfying the following properties: 
\begin{enumerate}[$(i)$]
\item  $|X'_0|< \e_*^2|X'_1|$ and $|Y'_0|< \e_*^2 |Y'_1|$. 
\item For each $1\leq i\leq s_1$,  $X'_i$ is $(\delta',2\e_*)$-good with respect to $f$.
\item For each $1\leq j\leq s_2$, $Y'_j$ is $(\delta,2\e_*)$-good with respect to $f$.  
\end{enumerate}

Now set $X_1=X'_0\cup X'_1$ and, for $2\leq i\leq s_1$, set  $X_i=X'_i$. Likewise set $Y_1=Y'_0\cup Y'_1$ and, for $2\leq j\leq s_2$, set $Y_j=Y'_j$. It is easy to check that $X_1$ is $(\delta',2\e_*+\e_*^2)$-good with respect to $f$ and that $Y_1$ is $(\delta,2\e_*+\e_*^2)$-good with respect to $f$. One can also directly check that $2\e_*+\e_*^2\leq \frac{\e}{4}$. So  for all $i\in[s_1]$ and $j\in[s_2]$, $X_i$ is $(\delta',\frac{\e}{4})$-good with respect to $f$ and $Y_j$ is $(\delta,\frac{\e}{4})$-good with respect to $f$. By Lemma \ref{lem:twosticks}, for all $(i,j)\in[s_1]\times[s_2]$, $(X_i,Y_j)$ is $\epsilon$-almost $(\delta+\delta')$-constant with respect to $f$.
\end{proof}

\begin{remark}\label{rem:thm1}
We make several comments on Theorem \ref{thm:maintree}$(a)$.
\begin{enumerate}[$(1)$]
\item  By averaging, the statement of Theorem \ref{thm:maintree}$(a)$ implies that there are $i\in[s_1]$ and $j\in [s_2]$ such that $|X_i|\geq(\epsilon/9)^{2t'}|X|$ and $|Y_j|\geq(\epsilon/9)^{2t}|Y|$. However, from the application of  Corollary \ref{cor:treeconstant}  in the proof, we see that this holds \emph{for all} $i\in [s_1]$ and $j\in[s_2]$. In other words, even though we do not have an equipartition, it is still the case that each set in the partition has positive density, uniformly bounded from below.

\item The assumption of omitting trees in both $f$ and $f^{opp}$ could be consolidated, but potentially at significant cost to the bounds. In particular, \cite[Corollary 1.13]{CT-hodges} shows that if $f\colon X\times Y\to [0,1]$ omits $(t,\delta)$-trees then for all $\e>0$,  $f^{opp}$ omits $(\lceil\e\inv\rceil 2^{2^{t+1}},2\delta+\e)$-trees. On the other hand, in some special cases (e.g., symmetric functions) one could have a much stronger quantitative  relationship between trees forbidden by $f$ versus by $f^{opp}$. 

\item Suppose $f\colon X\times X\to [0,1]$ is a symmetric function omitting $(t,\delta)$-trees. Then in Theorem \ref{thm:maintree}$(a)$, we can take $Y=X$, $\delta=\delta'$, and $t=t'$. In the proof, the two applications of  Corollary \ref{cor:treeconstant} to $f$ and $f^{opp}$ can be identified, which yields $s_1=s_2$ and $X_i=Y_i$ for all $i$. As a corollary, we obtain an analogous symmetric version of Theorem \ref{thm:1c}.  
\end{enumerate}
\end{remark}

Using Corollary \ref{cor:treeconstant2} in place of Corollary \ref{cor:treeconstant}, one can obtain the following variation of Theorem \ref{thm:maintree}$(a)$ with improved bounds, at the cost of small exceptional sets in the partitions. 

\begin{theorem}\label{thm:1alt}
Fix $t,t'\geq 1$ and $\delta,\delta',\e>0$ with $\e<1$.  Suppose $f\colon X\times Y\rightarrow [0,1]$ is such that $f$  omits $(t,\delta)$-trees and $f^{opp}$ omits $(t',\delta')$-trees.    Then there are $s_1\leq (8/\epsilon)^{t'}$, $s_2\leq (8/\epsilon)^{t}$, and $f$-definable  partitions $X=X_0\cup X_1\cup \ldots \cup X_{s_1}$ and $Y=Y_0\cup Y_1\cup \ldots \cup Y_{s_2}$ such that $|X_0|<\e|X|$, $|Y_0|<\e|Y|$ and, for all $(i,j)\in[s_1]\times[s_2]$, $(X_i,Y_j)$ is $\epsilon$-almost $(\delta+\delta')$-constant with respect to $f$.
\end{theorem}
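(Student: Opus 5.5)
We follow the proof of Theorem \ref{thm:maintree}$(a)$ almost verbatim, substituting Corollary \ref{cor:treeconstant2} for Corollary \ref{cor:treeconstant}. Set $\e_*=\e/8$. We apply Corollary \ref{cor:treeconstant2} to $f$ with parameters $t,\delta,\e_*$. This gives $s_2\leq \e_*^{-t}=(8/\e)^t$ and an $f$-definable partition $Y=Y_0\cup Y_1\cup\ldots\cup Y_{s_2}$ with the following properties: $|Y_0|<\e_*|Y|\leq \e|Y|$, and each $Y_j$ with $j\geq 1$ is $(\delta,2\e_*)$-good with respect to $f$, i.e.\ $(\delta,\e/4)$-good. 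Symmetrically, we apply Corollary \ref{cor:treeconstant2} to $f^{opp}$ with parameters $t',\delta',\e_*$. This gives $s_1\leq (8/\e)^{t'}$ and an $f$-definable partition $X=X_0\cup X_1\cup\ldots\cup X_{s_1}$ with $|X_0|<\e|X|$, where each $X_i$ with $i\geq 1$ is $(\delta',\e/4)$-good with respect to $f^{opp}$. Here $f^{opp}$-definable subsets of $X$ are exactly $f$-definable subsets, since both are Boolean combinations of level sets of the fibers $x\mapsto f(x,b)$. Unlike in the earlier proof, there is no need to absorb the error sets; they are simply kept as $X_0$ and $Y_0$.

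The main step is to check that for $i,j\geq 1$ the pair $(X_i,Y_j)$ is $(\delta',\delta;\e/4)$-good in the sense of Definition \ref{def:wgpair}. This is a statement about the restriction $f|_{X_i\times Y_j}$, whereas goodness was obtained with respect to all of $f$. Goodness passes to restrictions directly: $Y_j$ being $(\delta,\e/4)$-good for $f$ means that for every $x\in X$ the function $f^{opp}_x$ is $\e/4$-almost $\delta$-constant on $Y_j$. In particular this holds for every $x\in X_i$, so $Y_j$ is $(\delta,\e/4)$-good with respect to $f|_{X_i\times Y_j}$. The same argument applies to $X_i$. Good sets are weakly good with the same parameters: take $E$ to be the union over $x$ of the exceptional sets, each of size $<\frac{\e}{4}|Y_j|$, so that $|E|<\frac{\e}{4}|X_i||Y_j|$.

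Applying Lemma \ref{lem:twosticks} with $\e/4$ in place of $\e$, we conclude that $(X_i,Y_j)$ is $\e$-almost $(\delta+\delta')$-constant with respect to $f$ for all $(i,j)\in[s_1]\times[s_2]$. The only point requiring care is the constant bookkeeping: we need $\e_*^{-1}=8/\e$ for the bound on the number of parts, and $2\e_*=\e/4$ so that the factor $4$ in the symmetry lemma gives exactly $\e$. Both hold, and they are precisely why $9$ improves to $8$ and the exponent $2t$ improves to $t$ compared with Theorem \ref{thm:maintree}$(a)$.
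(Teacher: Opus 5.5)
Your proposal is correct and is essentially the paper's intended argument: rerun the proof of Theorem~\ref{thm:maintree}$(a)$ with Corollary~\ref{cor:treeconstant2} in place of Corollary~\ref{cor:treeconstant}, take $\e_*=\e/8$, and keep the exceptional sets $X_0,Y_0$ instead of absorbing them. Then $2\e_*=\e/4$ feeds directly into Lemma~\ref{lem:twosticks}. Your explicit checks that goodness passes to the restriction $f|_{X_i\times Y_j}$, that good implies weakly good, and that $f^{opp}$-definable subsets of $X$ are $f$-definable are steps the paper leaves implicit, and you handle them correctly.
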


In the previous result, the exceptional sets $X_0$ and $Y_0$ can be removed by distributing them proportionally to the other sets in the partition. However,  this would produce partitions that are no longer necessarily $f$-definable and result in slightly worse bounds due to a required change in the $\epsilon$ parameter.

\section{Equipartitions of weakly good sets}\label{sec:equipartition}

In this section, we begin the work needed to prove Theorem \ref{thm:1ceq}, which is our second version of regularity for stable functions, this time with equipartitions. Our main goal of this section is the following equipartitioning result for weakly good sets which will be crucial for proving Theorem \ref{thm:1ceq}.

\begin{theorem}\label{thm:equi}
Fix $s_1, s_2\geq 1$ and $\delta_1,\delta_2,\e,\lambda_1,\lambda_2,\nu,\rho>0$ with $\rho,\nu\leq\frac{1}{2}$. Let $X$ and $Y$ be sets, with $|X|,|Y|\geq \Omega_{\lambda_1,\lambda_2,\nu}(1)$, and fix a function $f\colon X\times Y\to [0,1]$. Suppose 
\[
X=A_0\cup A_1\cup\ldots\cup A_{s_1}\quad\text{and}\quad Y=B_0\cup B_1\cup\ldots\cup B_{s_2}
\]
are partitions such that  $|A_0|<\rho|X|$, $|B_0|<\rho|Y|$, and, for all $(i,j)\in[s_1]\times[s_2]$, $|A_i|\geq \lambda_1|X|$,  $|B_j|\geq\lambda_2|Y|$, and $(A_i,B_j)$ is weakly $(\delta_1,\delta_2;\e)$-good with respect to $f$.

Set $\tilde{\e}=(1+6\nu)\e+7\nu+2\rho$. Then there are $\ell_1<2(\lambda_1\nu)\inv$, $\ell_2<2(\lambda_2\nu)\inv$, and equipartitions
\[
X=X_1\cup\ldots\cup X_{\ell_1}\quad\text{and}\quad Y=Y_1\cup\ldots\cup Y_{\ell_2}
\]
such that, for all $(i,j)\in [\ell_1]\times [\ell_2]$, $(X_i,Y_j)$ is weakly $(\delta_1,\delta_2;\tilde{\e})$-good with respect to $f$.
\end{theorem}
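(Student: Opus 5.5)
The plan is to refine each $A_i$ and each $B_j$ by random sampling into blocks of a common size, and to absorb all leftovers, including $A_0$ and $B_0$, into the blocks as a small ``garbage'' portion. Each final pair $(X_k,Y_l)$ then consists of a random sub-rectangle $C\times D\seq A_i\times B_j$ plus a thin garbage frame. The witness relation for weak goodness will be the restriction of the original witness to $C\times D$, together with the whole frame. I treat $X$ in detail; $Y$ is symmetric.

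\textbf{Construction.} Let $n=|X|$ and set $q=\lceil \lambda_1\nu n/2\rceil$.
\begin{enumerate}[$(1)$]
\item For each $i\in[s_1]$, take a uniformly random permutation of $A_i$. Cut it into $\lfloor |A_i|/q\rfloor$ consecutive chunks $C$ of size exactly $q$, plus a leftover of size $<q$.
\item Since $|A_i|\geq\lambda_1 n$, each leftover has size at most $\tfrac{\nu}{2}|A_i|$, up to rounding. So the union $G$ of $A_0$ and all leftovers satisfies $|G|<(\rho+\nu/2)n+O(s_1)$.
\item The number of chunks is $\ell_1\leq n/q<2(\lambda_1\nu)\inv$.
\item Distribute $G$ among the chunks as evenly as possible, so that the final sets $X_k=C_k\cup G_k$ differ in size by at most $1$. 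This gives an equipartition, with $|G_k|\leq (\rho+\nu)|X_k|/(1-\rho-\nu)$, i.e.\ a garbage fraction of at most $\rho+2\nu$ for $\nu,\rho\leq\frac12$. Here ``$n$ sufficiently large'' absorbs the rounding terms.
\item Do the same on $Y$ to get $Y_l=D_l\cup H_l$.
\end{enumerate}

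\textbf{Witness.} Fix $(k,l)$ with $C_k\seq A_i$ and $D_l\seq B_j$. Let $E\seq A_i\times B_j$ witness that $A_i$ is weakly $(\delta_1,\e)$-good for $f|_{A_i\times B_j}$. Define
\[
E'=(E\cap (C_k\times D_l))\cup (G_k\times Y_l)\cup (X_k\times H_l).
\]
For $y\in D_l$, the set $X_k\setminus E'(X_k,y)$ is contained in $C_k\setminus E(A_i,y)$, on which $f_y$ is $\delta_1$-constant. For $y\in H_l$, the set $X_k\setminus E'(X_k,y)$ is empty. So the constancy condition holds, and only the size of $E'$ must be controlled. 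The frame contributes at most about $(2\rho+4\nu)|X_k||Y_l|$. So it suffices to show that, with positive probability, simultaneously for every chunk pair,
\[
|E\cap(C\times D)|\leq (\e+\nu)|C||D|
\]
or a bound of that shape. Converting from $|C||D|$ to $|X_k||Y_l|$ only shrinks the bound. Bookkeeping the constants then gives $\tilde\e=(1+6\nu)\e+7\nu+2\rho$. The same argument with the witness $F$ for $B_j$ handles the other half of weak goodness.

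\textbf{Concentration (main obstacle).} I would do this in two stages via Hoeffding's inequality for sampling without replacement; each random chunk is a uniform $q$-subset of $A_i$.
\begin{enumerate}[$(1)$]
\item Since $\E_{x\in A_i}|E(x,B_j)|/|B_j|<\e$, a chunk $C$ satisfies $\E_{x\in C}|E(x,B_j)|/|B_j|\leq \e+\nu/2$ with failure probability $\exp(-\Omega(\nu^2 q))$.
\item Conditioning on $C$, the averages $|E(C,y)|/|C|$ over $y\in B_j$ have mean at most $\e+\nu/2$. Hence a random chunk $D\seq B_j$ has average at most $\e+\nu$, with failure probability $\exp(-\Omega(\nu^2 q'))$, where $q'\approx\lambda_2\nu|Y|/2$.
\end{enumerate}
The $Y$-randomness is independent of the $X$-randomness, so this conditioning is legitimate. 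A union bound over at most $\ell_1\ell_2=O_{\lambda_1,\lambda_2,\nu}(1)$ chunk pairs, and over both witnesses $E$ and $F$, succeeds once $|X|,|Y|\geq\Omega_{\lambda_1,\lambda_2,\nu}(1)$.

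The delicate points are twofold. The exact constants in $\tilde\e$ must be tracked through the multiplicative $(1+6\nu)$ losses from the rescaling $|C|/|X_k|$. The rounding must also be handled so that the result is a genuine equipartition. I expect these to be routine but fiddly.
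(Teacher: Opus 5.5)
Your proposal is correct and is essentially the paper's proof. The paper likewise refines $X$ first against the averaged fibers $x\mapsto\E_{B_j}\boldsymbol{1}_{E_{i,j}}(x,y)$ and their $F_{i,j}$ analogues, then refines $Y$ against $y\mapsto\E_{X'_i}\boldsymbol{1}_{E_{\sigma(i),j}}(x,y)$ for the already-chosen $X$-blocks, using Hoeffding without replacement and a union bound (packaged as Lemmas \ref{lem:AFP}--\ref{lem:equi2}), then distributes the remainders and uses exactly your witness $E'$.

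The differences are cosmetic, apart from a few small slips. Your blocks are uniform subsets of all of $A_i$, rather than of $A_i$ minus a fixed remainder as in Lemma \ref{lem:equi}. This avoids the $(1-\nu)\inv$ factors that produce the paper's $(1+6\nu)$, so you actually get a slightly smaller $\tilde\e$; your remark about multiplicative losses from rescaling $|C|/|X_k|$ is therefore off. You should also take $q$ strictly larger than $\lambda_1\nu n/2$, since otherwise $\ell_1=2(\lambda_1\nu)\inv$ can occur. Finally, the garbage fraction is really $|G|/n+o(1)<\rho+\nu/2+o(1)$; your displayed bound $(\rho+\nu)/(1-\rho-\nu)$ does not imply $\rho+2\nu$.
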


The proof will be broken down into several steps.
The first key tool  is a tail inequality for hypergeometric distributions, which is also used in Malliaris and Shelah's \cite{MS} approach to the analogous equipartitioning of good sets in graphs.\footnote{See Ackerman, Freer, and Patel \cite{AFP} for a detailed account in the setting of arbitrary relational structures.} The suitable formulation for our function-theoretic setting is  Hoeffding's inequality \cite{hoeffding} for sampling without replacement. We now state this inequality in a form following directly from \cite[Proposition 1.2]{BarMai}. We remind the reader of the notation $(A)_m$ for the set of $m$-tuples from $A$ with pairwise distinct coordinates (see item (\ref{not:distinct}) in Section \ref{notation}). 

\begin{proposition}[Hoeffding's Inequality]\label{prop:hoeff}
Fix $n\geq 1$ and $f\colon A\to[0,1]$, where $A$ is a set of size $n$. Then for any $m\geq 1$ and $\nu>0$,
\[
|\{\xbar\in (A)_m:\E_{\xbar} f\geq \E_{A}f+\nu\}|\leq \exp(-2m\nu^2)|(A)_m|.
\]
\end{proposition}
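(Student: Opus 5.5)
The plan is to read the counting statement probabilistically. Let $\xbar=(x_1,\ldots,x_m)$ be drawn uniformly from $(A)_m$; this is exactly an ordered sample of size $m$ drawn without replacement from $A$. The claim then says $\Pr[\E_{\xbar}f\geq \E_A f+\nu]\leq \exp(-2m\nu^2)$. If $m>n$, then $(A)_m=\emptyset$ and there is nothing to prove, so we may assume $m\leq n$.

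Write $\mu=\E_A f$ and $S=\sum_{i\le m} f(x_i)$. The first step is the exponential Markov (Chernoff) bound: for every $\lambda>0$,
\[
\Pr[S\geq m(\mu+\nu)]\leq e^{-\lambda m(\mu+\nu)}\,\E\bigl[e^{\lambda S}\bigr].
\]

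The second step, which I expect to be the main obstacle, is Hoeffding's comparison between sampling without and with replacement. It states that for every continuous convex $\phi$, $\E\,\phi(S)\leq \E\,\phi(T)$, where $T=\sum_{i\le m}f(z_i)$ and $z_1,\ldots,z_m$ are i.i.d. uniform on $A$. I would first try Hoeffding's original argument from \cite{hoeffding}. It writes $\E\,\phi(T)$ as an average over the patterns of repeated indices among the i.i.d. draws. Conditioned on which indices coincide, $T$ is a weighted sum of values at distinct random points, and symmetrization over the order of a without-replacement sample shows this conditional distribution dominates $S$ in convex order, via Jensen. Alternatively, one can realize $S$ as a conditional expectation of $T$ under a suitable coupling and apply conditional Jensen. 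Since the statement is attributed to \cite[Proposition 1.2]{BarMai}, I would also be content to invoke that result, or Hoeffding's Theorem 4, for this comparison.

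The third step applies the comparison with $\phi(s)=e^{\lambda s}$. This reduces the problem to the independent case, where $\E[e^{\lambda T}]=\bigl(\E_A e^{\lambda f}\bigr)^m\leq e^{\lambda m\mu+m\lambda^2/8}$ by Hoeffding's lemma for $[0,1]$-valued variables. Optimizing at $\lambda=4\nu$ gives the bound $e^{-2m\nu^2}$.

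Finally, the paper's convention is that $\exp$ denotes base-$2$ exponentiation. Since $e^{-2m\nu^2}\leq 2^{-2m\nu^2}$, this yields the stated inequality, which is in fact slightly weaker than what the argument proves. Multiplying the probability by $|(A)_m|$ recovers the counting form.
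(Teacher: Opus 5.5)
Your proposal is correct and takes the same route as the paper. The paper gives no argument of its own and simply cites \cite[Proposition 1.2]{BarMai}, which is exactly your combination of the Chernoff bound, Hoeffding's convex-order comparison between sampling with and without replacement, and Hoeffding's lemma; your treatment of the trivial case $m>n$ and of the paper's base-$2$ convention for $\exp$ (via $e^{-2m\nu^2}\le 2^{-2m\nu^2}$) supplies the small translation the paper leaves implicit.
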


Using this,  we can obtain the first step toward Theorem \ref{thm:equi}, which says that given finitely many functions on a sufficiently large set $A$, we can find a nontrivial equipartition of $A$ into pieces on  which the averages of the functions do not increase very much. This is a direct analogue of \cite[Proposition 4.4]{AFP}, which deals with the special case of indicator functions.

\begin{lemma}\label{lem:AFP}
Fix $m,q,w\geq 1$ and $\nu>0$ such that $qw<\exp(2\nu^2m)$. Let $A$ be a set of size $wm$. Then for any  functions $f_1,\ldots,f_q\colon A\to[0,1]$, there is a partition $A=A_1\cup\ldots\cup A_w$ such that, for all $i\in[w]$, $|A_i|=m$  and for all $t\in[q]$, $\E_{A_i}f_t< \E_{A}f_t+\nu$.
\end{lemma}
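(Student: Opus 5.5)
The plan is a random partition together with the probabilistic method, using Proposition \ref{prop:hoeff} and a union bound. Enumerate $A$ and consider a uniformly random ordering of it, i.e., a uniformly random element $\bar a=(a_1,\ldots,a_{wm})\in (A)_{wm}$. Cut $\bar a$ into $w$ consecutive blocks of length $m$: $A_i=\{a_{(i-1)m+1},\ldots,a_{im}\}$. For every ordering the sets $A_1,\ldots,A_w$ partition $A$ and each has size exactly $m$. It therefore suffices to show that, with positive probability, $\E_{A_i}f_t<\E_Af_t+\nu$ holds for all $i\in[w]$ and $t\in[q]$.

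Fix $i\in[w]$ and $t\in[q]$. The map sending an ordering to its $i$-th block, viewed as an $m$-tuple, is a surjection $(A)_{wm}\to(A)_m$, and every fiber has the same size, namely $(wm-m)!$. So the $i$-th block is a uniformly random element of $(A)_m$. Since the coordinates are distinct, $\E_{A_i}f_t=\E_{\xbar}f_t$ where $\xbar$ is that block. Proposition \ref{prop:hoeff} then gives that the proportion of orderings with $\E_{A_i}f_t\geq \E_Af_t+\nu$ is at most $\exp(-2m\nu^2)$.

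Taking a union bound over the $qw$ pairs $(i,t)$, the proportion of bad orderings is at most $qw\exp(-2m\nu^2)$. This is strictly less than $1$ by the hypothesis $qw<\exp(2\nu^2 m)$. Hence some ordering is good for all pairs, and its block partition is the required one.

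The only step needing care is the counting argument showing that each block is uniformly distributed on $(A)_m$, so that Hoeffding's inequality applies to it directly. This amounts to the fiber-size computation above. The rest follows immediately from the union bound and the strict inequality in the hypothesis.
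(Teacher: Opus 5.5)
Your proposal is correct and follows essentially the same argument as the paper. The paper likewise cuts a uniformly random permutation into consecutive blocks, uses the fiber count $(n-m)!$ to transfer Proposition~\ref{prop:hoeff} from $(A)_m$ to each block, and then takes a union bound over the $qw$ pairs $(i,t)$.
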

\begin{proof} 
Set $n=wm$. Without loss of generality, assume $A=[n]$. Let $S_n$ be the set of permutations of $A$. For
$\sigma\in S_n$ and $i\in[w]$, set
\[
A_i^\sigma=\{\sigma((i-1)m+1),\ldots,\sigma(im)\}.
\]
Then every $\sigma\in S_n$ determines an equipartition $A=A_1^\sigma\cup\ldots\cup A_w^\sigma$.

Given $i\in[w]$ and $t\in[q]$, define
\[
 X_{i,t} = \{\sigma\in S_n: \E_{A_i^\sigma}f_t\geq \E_{A}f_t+\nu\}.
\]
It suffices to show that for any $i\in[w]$ and $t\in[q]$, $|X_{i,t}|<n!/qw$. Indeed, this implies that there is some $\sigma\in S_n\backslash \bigcup_{(i,t)\in[w]\times[q]} X_{i,t}$, which yields the desired equipartition of $A$. So fix $i\in[w]$ and $t\in[q]$.  Define
\[
Y_t=\{\xbar\in (A)_m:\E_{\xbar} f_t\geq \E_{A}f_t+\nu\}.
\]
Then Proposition \ref{prop:hoeff} implies
\begin{equation}\label{eq:YitHof}
|Y_t|\leq\exp (-2m\nu^2)|(A)_m|<\frac{|(A)_m|}{qw},
\end{equation}
where the second inequality uses our assumption that $qw<\exp(2\nu^2m)$. By an easy counting exercise, we also have
\begin{equation}\label{eq:XitYit}
|X_{i,t}|=(n-m)!|Y_t|=\frac{n!|Y_t|}{|(A)_m|}.
\end{equation}
Together,  (\ref{eq:YitHof}) and  (\ref{eq:XitYit})  yield $|X_{i,t}|<n!/qw$, as desired. 
\end{proof}

In the next step, we extend the previous lemma to the case where $A$ is not exactly divisible by $m$. To streamline the statements going forward, we  adopt the following terminology. 

\begin{definition}
Fix $\e>0$ and a set $X$. Given $\cF\seq [0,1]^X$, we say that a nonempty set $A\seq X$ is \emph{$\e$-small for $\cF$} if for all $f\in\cF$, $\E_A f<\e$.
\end{definition}

\begin{lemma}\label{lem:equi}
Fix $q\geq 1$ and $\e,\mu,\nu>0$ with $\mu<1$. Let $X$ be a set and fix $A\seq X$ with 
\[
|A|\geq 2\mu\inv\quad\text{and}\quad 2q\mu\inv<\exp(2\nu^2\lfloor\mu|A|\rfloor).
\]
Suppose $A$ is $\e$-small for $\cF\seq [0,1]^X$ with $|\cF|\leq q$. Then there is an integer $w\leq 2\mu\inv$ and a partition $A=A_0\cup A_1\cup\ldots\cup A_w$ satisfying the following properties:
\begin{enumerate}[$(i)$]
\item $|A_0|<\mu|A|$.
\item For all $i\in[w]$, $|A_i|=\lfloor\mu|A|\rfloor$ and $A_i$ is $((1-\mu)\inv\e+\nu)$-small for $\cF$.
\end{enumerate}
\end{lemma}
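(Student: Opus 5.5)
Set $m=\lfloor\mu|A|\rfloor$ and $w=\lfloor |A|/m\rfloor$. Since $|A|\geq 2\mu\inv$, we have $\mu|A|\geq 2$, so $m\geq 1$ and $m>\mu|A|-1\geq \mu|A|/2$. Hence $w\leq |A|/m<2\mu\inv$. Write $|A|=wm+r$ with $0\leq r<m$. Then $r<m\leq\mu|A|$, which will give $(i)$.

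The plan is to remove a set $A_0$ of size $r$ that does not increase the averages too much, and then apply Lemma \ref{lem:AFP} to the remaining set $A'=A\backslash A_0$, which has size exactly $wm$.

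For the choice of $A_0$, take any $A_0\seq A$ with $|A_0|=r$; an arbitrary choice already suffices. For each $g\in\cF$, nonnegativity of $g$ gives
\[
\E_{A'}g=\frac{\sum_{A'}g}{|A'|}\leq\frac{\sum_A g}{|A|-r}=\frac{|A|}{|A|-r}\E_Ag<(1-\mu)\inv\e,
\]
using $r<\mu|A|$ and $\E_Ag<\e$. So $A'$ is $(1-\mu)\inv\e$-small for $\cF$. Note that $A'$ is nonempty, since $w\geq 1$: indeed $m\leq\mu|A|<|A|$.

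Next apply Lemma \ref{lem:AFP} to $A'$ with the functions in $\cF$ restricted to $A'$, so that $q$ bounds their number, and with parameters $m$, $w$, $\nu$. Its hypothesis $qw<\exp(2\nu^2m)$ follows from $qw<2q\mu\inv<\exp(2\nu^2\lfloor\mu|A|\rfloor)$. The lemma yields $A'=A_1\cup\ldots\cup A_w$ with each $|A_i|=m$ and, for every $g\in\cF$,
\[
\E_{A_i}g<\E_{A'}g+\nu<(1-\mu)\inv\e+\nu.
\]
This is $(ii)$.

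The only delicate points are the integer bookkeeping: $m\geq 1$, $w\leq 2\mu\inv$, and $r<\mu|A|$. If $r=0$, then $A_0$ is empty, which the paper's convention on partitions with an error set allows. If $\cF$ is empty, the argument is trivial, and Lemma \ref{lem:AFP} can be applied with any single dummy function.
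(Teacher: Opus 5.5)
Your proposal is correct and follows essentially the same route as the paper's proof. You remove an arbitrary set $A_0$ of size $r=|A|-wm<m\leq\mu|A|$. You bound $\E_{A'}g\leq(1-\mu)\inv\E_Ag$ using nonnegativity, and then apply Lemma \ref{lem:AFP} to $A'$ with $m=\lfloor\mu|A|\rfloor$, using $qw\leq 2q\mu\inv<\exp(2\nu^2 m)$. Your integer bookkeeping is right, including the slightly sharper $w<2\mu\inv$.
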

\begin{proof}
Set $m=\lfloor\mu|A|\rfloor$ and write $|A|=wm+s$ for some integers $w>0$ and $0\leq s\leq m-1$. Note that 
\begin{equation}\label{eq:wbound}
w\leq \frac{|A|}{m}\leq \frac{|A|}{\mu|A|-1}\leq 2\mu\inv,
\end{equation}
where the last inequality holds since $|A|\geq 2\mu\inv$. Let $A_0\seq A$ be some subset of size $s$ and let $A'=A\backslash A_0$. Note that $|A_0|<m\leq\mu|A|$, hence we have condition $(i)$. Also,  $|A'|>(1-\mu)|A|$ and, consequently, for every $f\in\cF$,
\begin{equation}\label{eq:A'f}
\E_{A'} f\leq (1-\mu)\inv\E_A f<(1-\mu)\inv\e,
\end{equation}
where the final inequality holds since $A$ is $\e$-small for $f$.

Note that $|A'|=wm$. We also have $qw\leq 2q\mu\inv< \exp(2\nu^2m)$ by (\ref{eq:wbound}) and our initial assumptions. Thus we  may apply Lemma \ref{lem:AFP} to find an equipartition $A'=A_1\cup\ldots\cup A_w$ such that for all $i\in[w]$, $|A_i|=m$ and for all $f\in\cF$, $\E_{A_i}f<\E_{A'} f+\nu$. Combined with (\ref{eq:A'f}), this yields $(ii)$.
\end{proof}

Next we apply the previous lemma simultaneously to the pieces of  an initial partition of a set $X$. This will be the key technical  ingredient for the proof of Theorem \ref{thm:equi}. 

\begin{lemma}\label{lem:equi2}
Fix $q,s\geq 1$ and $\e,\lambda,\nu,\rho>0$ with $\nu<1$. Let $X$ be a set such that
\[
 |X|\geq 2(\lambda\nu)\inv \quad\text{ and }\quad 2q(\lambda\nu)\inv<\exp(2\nu^2\lfloor \lambda\nu |X|\rfloor).
\]
Fix $\cF_1,\ldots,\cF_{s}\seq [0,1]^X$ with $|\cF_i|\leq q$ for all $i\in[s]$. Suppose
\[
X=A_0\cup A_1\cup\ldots\cup A_s
\]
is a partition such that $|A_0|<\rho|X|$ and, for all $i\in[s]$, $|A_i|\geq\lambda|X|$ and $A_i$ is $\e$-small for $\cF_i$. Then there is an integer $\ell<2(\lambda\nu)\inv$ and a partition $X=X_0\cup X_1\cup\ldots\cup X_\ell$
satisfying the following properties:
\begin{enumerate}[$(i)$]
\item $|X_0|<(\rho+\nu)|X|$ and $|X_1|=\ldots=|X_\ell|$.
\item For all $i\in[\ell]$ there is some $i^*\in[s]$ such that $X_i\seq A_{i^*}$ and $X_i$ is $((1-\nu)\inv\e+\nu)$-small for $\cF_{i^*}$.  
\end{enumerate}
\end{lemma}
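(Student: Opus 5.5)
The plan is to apply Lemma \ref{lem:equi} to each piece $A_i$, $i\in[s]$, using the common parameter $\mu_i=\lambda\nu|X|/|A_i|$. This choice is made so that all the resulting blocks have the same size $m=\lfloor\lambda\nu|X|\rfloor$, independent of $i$. We then collect all the blocks as $X_1,\ldots,X_\ell$, and put $A_0$ together with all the leftover sets into $X_0$.

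Fix $i\in[s]$ and set $\mu=\mu_i=\lambda\nu|X|/|A_i|$. Since $|A_i|\geq\lambda|X|$, we have $\mu\leq\nu<1$ and $\mu|A_i|=\lambda\nu|X|$, so $\lfloor\mu|A_i|\rfloor=m$. Next we check the hypotheses of Lemma \ref{lem:equi}, with $\cF=\cF_i$ and $q$ as given.
\begin{enumerate}[$(1)$]
\item The size condition $|A_i|\geq 2\mu\inv$ is equivalent to $\lambda\nu|X|\geq 2$. This follows from $|X|\geq 2(\lambda\nu)\inv$.
\item The exponential condition $2q\mu\inv<\exp(2\nu^2 m)$ holds because $\mu\inv=|A_i|/(\lambda\nu|X|)\leq(\lambda\nu)\inv$, combined with the assumed inequality $2q(\lambda\nu)\inv<\exp(2\nu^2 m)$.
\end{enumerate}
Lemma \ref{lem:equi} then gives a partition $A_i=A_{i,0}\cup A_{i,1}\cup\ldots\cup A_{i,w_i}$. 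Each $A_{i,j}$ with $j\geq 1$ has size exactly $m$ and is $((1-\mu)\inv\e+\nu)$-small for $\cF_i$. Since $\mu\leq\nu$, this is at most $(1-\nu)\inv\e+\nu$, which is condition $(ii)$ with $i^*=i$. The leftover set satisfies $|A_{i,0}|<\mu|A_i|=\lambda\nu|X|$.

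Set $X_0=A_0\cup\bigcup_{i\in[s]}A_{i,0}$, and let $X_1,\ldots,X_\ell$ enumerate the blocks $A_{i,j}$ with $j\geq1$. These blocks all have size $m$, so $|X_1|=\ldots=|X_\ell|$.

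The one delicate step is bounding $|X_0|$, because the crude bound $s\lambda\nu|X|$ is too weak. Instead we use the sharper fact from the proof of Lemma \ref{lem:equi} that $|A_{i,0}|<m\leq\lambda\nu|X|\leq\nu|A_i|$, where the last inequality uses $|A_i|\geq\lambda|X|$. Summing over $i\in[s]$ gives
\[
|X_0|<\rho|X|+\nu\sum_{i\in[s]}|A_i|\leq(\rho+\nu)|X|,
\]
which is condition $(i)$.

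For the count, the $\ell$ blocks are disjoint, each of size $m$, and all contained in $X$, so $\ell\leq|X|/m$. From $\lambda\nu|X|\geq 2$ we get $m\geq\lambda\nu|X|-1\geq\lambda\nu|X|/2$, with equality throughout only if $\lambda\nu|X|=2$. Hence $\ell\leq 2(\lambda\nu)\inv$.

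To make this strict, there are two cases:
\begin{enumerate}[$(1)$]
\item If $m>\lambda\nu|X|/2$, the bound above is already strict.
\item Otherwise $\lambda\nu|X|=2$ and $m=1$. Then $\ell=2(\lambda\nu)\inv$ would force the blocks to exhaust $X$, so $X_0=\emptyset$. In that subcase we rule out equality via $(1-\mu)$ slack, or by a finer count using $\sum_i w_i m\leq\sum_i|A_i|<|X|$ when $A_0\neq\emptyset$.
\end{enumerate}
I expect only this bookkeeping at the boundary to need care.
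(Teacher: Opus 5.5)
Your argument is correct and is essentially the paper's proof. You apply Lemma~\ref{lem:equi} to each $A_i$ with $\mu_i$ chosen so that $\mu_i|A_i|$ is independent of $i$, collect the equal-size blocks, and use $\mu_i\leq\nu$ to control the leftovers. The paper takes the common value to be $\mu|X|$ with $\mu=\min_i\nu|A_i|/|X|\geq\lambda\nu$; your direct choice $\lambda\nu|X|$ works equally well.

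Your closing case analysis, however, rests on an arithmetic slip. Since $m=\lfloor\lambda\nu|X|\rfloor>\lambda\nu|X|-1\geq\lambda\nu|X|/2$, with the first inequality strict, your case $(2)$ can never occur: if $\lambda\nu|X|=2$ then $m=2$, not $1$. So $\ell\leq|X|/m<2(\lambda\nu)^{-1}$ follows at once, which is exactly how the paper gets strictness from $m+1>\mu|X|$. The vague ``$(1-\mu)$ slack'' fix, which says nothing when $A_0=\emptyset$, should simply be deleted. Likewise, the ``crude'' bound $s\lambda\nu|X|$ on $\sum_i|A_{i,0}|$ is not too weak, since $s\lambda|X|\leq\sum_i|A_i|\leq|X|$.
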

\begin{proof}
We will first partition each set $A_1,\ldots,A_s$  using Lemma \ref{lem:equi}.  Toward this end, define
\[
\mu=\min\{\nu|A_i|/|X|:1\leq i\leq s\}.
\]
Note that $\mu\geq\lambda\nu$ since $|A_i|\geq\lambda|X|$ for all $i\in[s]$. So $|X|\geq 2\mu\inv$ by assumption. 

Now fix $i\in[s]$ and set $\mu_i=\mu|X|/|A_i|$. Then $\mu_i|A_i|=\mu|X|$, hence $|A_i|\geq 2\mu_i\inv$ since $|X|\geq 2\mu\inv$.
Moreover, since $\mu_i\geq\mu\geq\lambda\nu$, we have 
\[
2q\mu_i\inv\leq 2q(\lambda\nu)\inv<\exp(2\nu^2\lfloor\lambda\nu|X|\rfloor)\leq \exp(2\nu^2\lfloor \mu|X|\rfloor)=\exp(2\nu^2\lfloor \mu_i|A_i|\rfloor).
\]
Since $A_i$ is $\e$-small for $\cF_i$, we can therefore apply Lemma \ref{lem:equi} to $A_i$ with $q$, $\e$, $\mu_i$,  $\nu$, and $\cF_i$.  This yields an integer $w_i\leq 2\mu_{i}\inv$ and a partition 
\[
A_i=A_{i,0}\cup A_{i,1}\cup\ldots\cup A_{i,w_i}
\] 
such that $|A_{i,0}|<\mu_{i}|A_i|$ and, for all $t\in [w_i]$, $|A_{i,t}|=\lfloor \mu_{i}|A_i|\rfloor$ and $A_{i,t}$ is $((1-\mu_i)\inv\e+\nu)$-small for $\cF_i$. Note that 
\[
\mu_{i}=\mu|X|/|A_i|\leq (\nu|A_i|/|X|)(|X|/|A_i|)=\nu.
\]
So, recalling  $\mu_i|A_i|=\mu|X|$, we altogether have the following properties:
\begin{enumerate}[$(a)$]
\item $|A_{i,0}|<\nu|A_i|$.
\item For all $t\in [w_i]$, $|A_{i,t}|=\lfloor \mu|X|\rfloor$ and $A_{i,t}$ is $((1-\nu)\inv\e+\nu)$-small for $\cF_i$. 
\end{enumerate}

Now set $X_0=A_0\cup A_{1,0}\cup\ldots\cup A_{s,0}$. Choose $X_1,\ldots,X_{\ell}$ to be any enumeration of the collection of sets $\{A_{i,t}:1\leq i\leq s,~1\leq t\leq w_i\}$. So we have a partition $X=X_0\cup X_1\cup\ldots\cup X_{\ell}$.  Note that by $(b)$, each set $X_1,\ldots,X_\ell$ has size $m\coloneqq\lfloor \mu|X|\rfloor$ by construction. Therefore 
\[
m+1>\mu|X|\geq\mu\sum_{i=1}^{\ell}|X_i|=\mu\ell m\geq\lambda\nu \ell m,
\]
which implies $\ell<\frac{m+1}{m}(\lambda\nu)\inv\leq  2(\lambda\nu)\inv$. Thus it remains to verify properties $(i)$ and $(ii)$ in the statement.

For property $(i)$, we have
\[
|X_0|\leq|A_0|+\sum_{i=1}^s|A_{i,0}|<\rho|X|+\nu\sum_{i=1}^s |A_i|\leq (\rho+\nu)|X|,
\]
where the second inequality follows from $(a)$ and the assumption $|A_0|<\rho|X|$, and the third inequality uses the fact that $A_1,\ldots,A_s$ are pairwise disjoint.

For property $(ii)$, fix $i\in[\ell]$. By construction, there is some $i^*\in[s]$ and $t\in[w_{i^*}]$ such that $X_i=A_{i^*,t}$. So $X_i\seq A_{i^*}$ and, moreover, $X_i$ is $((1-\nu)\inv\e+\nu)$-small for $\cF_{i^*}$ by $(b)$. 
\end{proof}

The last ingredient  is the following standard exercise.

\begin{remark}\label{rem:distribute}
Fix $\ell\geq 1$ and $0<\rho\leq1$. Let $X$ be a set and suppose $X=X'_0\cup X'_1\cup\ldots\cup X'_{\ell}$ is a partition such that $|X'_0|<\rho|X|$ and $|X'_1|=\ldots=|X'_\ell|$. Let $X=X_1\cup\ldots\cup X_\ell$ be an equipartition obtained by distributing $X'_0$ to $X'_1,\ldots,X'_\ell$ as evenly as possible. Then for all $i\in[\ell]$, $|X_i\backslash X'_i|< \rho|X_i|+1$.
\end{remark}

We now have all the tools necessary to prove Theorem \ref{thm:equi}. However, since the argument is rather elaborate, we will first prove a warm-up result, Proposition \ref{prop:equisym} below. Proposition \ref{prop:equisym} is a one-sided version in the setting of good sets (rather than weakly good sets), and also assumes a more restrictive relationship on the relative sizes of $X$ and $Y$.   This  will not be needed for the proof of Theorem \ref{thm:equi}, but it will be used later in the context of symmetric functions (Theorem \ref{thm:1.5sym}).

\begin{proposition}\label{prop:equisym}
Fix $s\geq 1$ and $\delta,\e,\lambda,\nu,\rho>0$ with $\nu,\rho\leq\frac{1}{2}$. Let $X$ and $Y$ be sets, with $|X|\geq \Omega_{\lambda,\nu}(\log 2|Y|)$,
and fix a function $f\colon X\times Y\to [0,1]$. Suppose 
\[
X=A_0\cup A_1\cup\ldots\cup A_s
\]
is a partition 
such that  $|A_0|<\rho|X|$ and, for all $i\in[s]$, $|A_i|\geq \lambda|X|$ and $A_i$ is  $(\delta,\e)$-good with respect to $f$.

Set $\tilde{\e}=(1+2\nu)\e+3\nu+\rho$. Then there is an integer $\ell<2(\lambda\nu)\inv$ and an  equipartition
\[
X=X_1\cup\ldots\cup X_{\ell}
\]
such that, for all $i\in[\ell]$, $X_i$ is  $(\delta,\tilde{\e})$-good with respect to $f$.
\end{proposition}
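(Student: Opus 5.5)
We discretize the goodness witnesses into a finite family of indicator functions on each piece $A_i$, and apply Lemma \ref{lem:equi2} with $q=|Y|$. The leftover set is then absorbed via Remark \ref{rem:distribute}.

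\emph{Setting up the families.} Fix $i\in[s]$. Since $A_i$ is $(\delta,\e)$-good with respect to $f$, for each $y\in Y$ there is a set $E_i(y)\seq A_i$ with $|E_i(y)|<\e|A_i|$ such that $f_y$ is $\delta$-constant on $A_i\backslash E_i(y)$. Define
\[
\cF_i=\{\boldsymbol{1}_{E_i(y)}:y\in Y\}\seq[0,1]^X.
\]
Then $|\cF_i|\leq |Y|$, and $A_i$ is $\e$-small for $\cF_i$.

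\emph{Applying Lemma \ref{lem:equi2}.} We apply Lemma \ref{lem:equi2} with $q=|Y|$ and the given $s,\e,\lambda,\nu,\rho$. Its hypotheses are $|X|\geq 2(\lambda\nu)\inv$ and $2|Y|(\lambda\nu)\inv<\exp(2\nu^2\lfloor\lambda\nu|X|\rfloor)$. Both follow from $|X|\geq C_{\lambda,\nu}\log(2|Y|)$ for a suitable constant $C_{\lambda,\nu}$. This is exactly where the assumption $|X|\geq\Omega_{\lambda,\nu}(\log 2|Y|)$ is used; the $2$ inside the logarithm handles $|Y|=1$.

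The lemma gives $\ell<2(\lambda\nu)\inv$ and a partition $X=X'_0\cup X'_1\cup\ldots\cup X'_\ell$ with the following properties. First, $|X'_0|<(\rho+\nu)|X|$ and the sets $X'_1,\ldots,X'_\ell$ all have the same size. Second, each $X'_i$ lies inside some $A_{i^*}$ and satisfies
\[
|E_{i^*}(y)\cap X'_i|<\bigl((1-\nu)\inv\e+\nu\bigr)|X'_i|\quad\text{for all }y\in Y.
\]

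\emph{Redistributing.} Since $\rho+\nu\leq 1$, Remark \ref{rem:distribute} produces an equipartition $X=X_1\cup\ldots\cup X_\ell$ with $X'_i\seq X_i$ and $|X_i\backslash X'_i|<(\rho+\nu)|X_i|+1$.

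\emph{Verifying goodness.} For each $y\in Y$, take the exceptional set in $X_i$ to be $(E_{i^*}(y)\cap X'_i)\cup(X_i\backslash X'_i)$. Off this set we are inside $X'_i\backslash E_{i^*}(y)\seq A_{i^*}\backslash E_{i^*}(y)$, so $f_y$ is $\delta$-constant there. The exceptional set has size less than
\[
\bigl((1-\nu)\inv\e+\nu\bigr)|X_i|+(\rho+\nu)|X_i|+1.
\]
We bound this in two steps. Since $\nu\leq\frac12$, we have $(1-\nu)\inv\leq 1+2\nu$. Also, each $|X_i|\geq |X|/\ell>\lambda\nu|X|/2$, which is at least $\nu\inv$ once $|X|$ is large in terms of $\lambda,\nu$; hence $1\leq\nu|X_i|$. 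Together these give a total below $((1+2\nu)\e+3\nu+\rho)|X_i|=\tilde\e|X_i|$, as required.

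The main obstacle is this final bookkeeping step. The strict inequalities have to be preserved, and the additive $+1$ from Remark \ref{rem:distribute} must be absorbed into the extra $\nu$ term. Absorbing it is what forces a lower bound on $|X|$ depending on $\lambda$ and $\nu$, and this is why the size hypothesis appears in $\Omega_{\lambda,\nu}$ form rather than as an explicit constant.
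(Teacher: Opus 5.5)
Your proposal is correct and matches the paper's proof essentially step for step. It uses the same families $\cF_i=\{\boldsymbol{1}_{E_i(y)}:y\in Y\}$ with $q=|Y|$, the same applications of Lemma \ref{lem:equi2} and Remark \ref{rem:distribute}, the same exceptional sets $(E_{i^*}(y)\cap X'_i)\cup(X_i\backslash X'_i)$, and the same absorption of the additive $+1$ into an extra $\nu|X_i|$ using that $X$ is large.

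One harmless slip: $|X_i|\geq |X|/\ell$ can fail for the smaller parts of an equipartition. However, $|X_i|\geq\lfloor |X|/\ell\rfloor>\lambda\nu|X|/2-1$ still gives $|X_i|\geq\nu\inv$ once $|X|$ is large in terms of $\lambda$ and $\nu$.
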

\begin{proof}
For each $i\in [s]$ and $y\in Y$, fix $E_{i,y}\seq A_i$ such that $|E_{i,y}|<\e|A_i|$ and $f_y$ is $\delta$-constant on $A_i\backslash E_{i,y}$. Given $i\in[s]$, set $\cF_i=\{\boldsymbol{1}_{E_{i,y}}:y\in Y\}$. Then for any $i\in[s]$, $A_i$ is $\e$-small for $\cF_i$ since, given $y\in Y$, $\E_{A_i}\boldsymbol{1}_{E_{i,y}}=|E_{i,y}|/|A_i|<\e$.

Note that for each $i\in[s]$, $|\cF_i|\leq |Y|$. Since $X$ is sufficiently large in terms of $|Y|$, we can therefore apply Lemma \ref{lem:equi2} to the partition $X=A_0\cup A_1\cup\ldots\cup A_s$ to obtain $\ell<2(\lambda\nu)\inv$ and a partition $X=X'_0\cup X'_1\cup\ldots\cup X'_\ell$ satisfying the following properties:
\begin{enumerate}[$(i)$]
\item $|X'_0|<(\rho+\nu)|X|$ and $|X'_1|=\ldots=|X'_\ell|$.
\item For all $i\in [\ell]$ there is some $\sigma(i)\in[s]$ such that $X'_i\seq A_{\sigma(i)}$ and $X'_i$ is $\e_1$-small for $\cF_{\sigma(i)}$, where $\e_1=((1-\nu)\inv\e+\nu)$. 
\end{enumerate}

Construct an equipartition
\[
X=X_1\cup\ldots\cup X_\ell
\]
by distributing $X'_0$ to $X'_1,\ldots,X'_\ell$ as evenly as possible. We will show that this equipartition is as desired. First, recall that $\ell<2(\lambda\nu)\inv$. Also, by $(i)$ and Remark \ref{rem:distribute}, if $i\in[\ell]$ then
\begin{equation}\label{eq:distoff2}
|X_i\backslash X'_i|<(\rho+\nu)|X_i|+1\leq (\rho+2\nu)|X_i|,
\end{equation}
where the final upper bound holds since $X$ is sufficiently large.

Set $\e_2=\e_1+2\nu+\rho$. Then we have
\[
\e_2=\frac{\e}{1-\nu}+3\nu+\rho=\left(1+\frac{\nu}{1-\nu}\right)\e+3\nu+\rho\leq (1+2\nu)\e+3\nu+\rho=\tilde{\e},
\]
where the final inequality uses $\nu\leq\frac{1}{2}$. So it suffices to show that for all $i\in[\ell]$, $X_i$ is $(\delta,\e_2)$-good with respect to $f$. 

Fix $i\in[\ell]$ and $y\in Y$, and set $i^*=\sigma(i)$. So $X'_i\seq A_{i^*}$. Define
\[
E^*_{i,y}=(E_{\sigma(i),y}\cap X'_i)\cup (X_i\backslash X'_i).
\]
Note that $E^*_{i,y}\seq X_i$. We will use $E^*_{i,y}$ to witness that $X_i$ is $(\delta,\e_2)$-good with respect to $f$.

First,  we have
\begin{multline*}
|E^*_{i,y}|\leq | E_{\sigma(i),y}\cap X'_i|+(\rho+2\nu)|X_i|\\
=|X'_i|\E_{X'_i}\boldsymbol{1}_{E_{\sigma(i),y}}+(\rho+2\nu)|X_i|
<\e_1|X'_i|+(\rho+2\nu)|X_i|\leq\e_2|X_i|,
\end{multline*}
where the first inequality uses the definition of $E^*_{i,y}$ and (\ref{eq:distoff2}), the second inequality holds since $X'_i$ is $\e_1$-small for $\cF_{\sigma(i)}$ (by $(ii)$), and the third inequality uses $|X'_i|\leq|X_i|$ and the definition of $\e_2$.

Finally, fix $y\in Y$. Then 
\[
X_i\backslash E^*_{i,y}\seq X'_i\backslash E_{\sigma(i),y}\seq A_{\sigma(i)}\backslash E_{\sigma(i),y},
\]
and hence $f_y$ is $\delta$-constant on $X_i\backslash E^*_{i,y}$ by choice of $E_{\sigma(i),y}$. 
\end{proof}

Using the previous result, one can deduce an analogue of Theorem \ref{thm:equi} for partitions with fully good pairs in the special case that $|X|$ and $|Y|$ are mutually exponentially bounded. On the other hand, such a result for arbitrary functions does not hold in full generality (see Proposition \ref{prop:goodcounter}). This is the main reason for our focus on weakly good sets.

We  now prove Theorem \ref{thm:equi}. The rough overall structure of the argument is similar to the proof of Proposition \ref{prop:equisym}. However, the details become more intricate since we have to deal with two partitions simultaneously.

\begin{delayedproof}{thm:equi}
For each $(i,j)\in [s_1]\times [s_2]$, fix $E_{i,j}\seq A_i\times B_j$ (resp., $F_{i,j}\seq A_i\times B_j$) witnessing that $A_i$ is weakly $(\delta_1,\e)$-good (resp., $B_j$ is weakly $(\delta_2,\e)$-good) with respect to $f|_{A_i\times B_j}$. In particular, for all  $(i,j)\in[s_1]\times [s_2]$, 
\begin{equation}\label{eq:EFfibers}
|E_{i,j}|<\e|A_i||B_j|\quad\text{and}\quad |F_{i,j}|<\e|A_i||B_j|.
\end{equation}

 We first aim to apply Lemma \ref{lem:equi2} to the partition $X=A_0\cup A_1\cup\ldots\cup A_{s_1}$.  Toward this end, given $(i,j)\in[s_1]\times[s_2]$, define the maps $\alpha_{i,j},\alpha^*_{i,j}\colon X\to [0,1]$ such that, given $x\in X$,
\[
\alpha_{i,j}(x)=\E_{B_j}\boldsymbol{1}_{E_{i,j}}(x,y) \quad\text{and}\quad \alpha^*_{i,j}(x)= \E_{B_j}\boldsymbol{1}_{F_{i,j}}(x,y).
\]
Given $i\in[s_1]$, define  $\calE_{i}=\{\alpha_{i,j}:j\in[s_2]\}\cup\{\alpha^*_{i,j}:j\in[s_2]\}$. \medskip

\noindent\textit{Claim 1.} For all $i\in[s_1]$, $A_i$ is $\e$-small for $\calE_i$.

\noindent\textit{Proof.} Fix $i\in[s_1]$ and $j\in[s_2]$. Then 
\[
\E_{A_i}\alpha_{i,j}=\E_{A_i}\E_{B_j}\boldsymbol{1}_{E_{i,j}}\quad\text{and}\quad \E_{A_i}\alpha^*_{i,j}=\E_{A_i}\E_{B_j}\boldsymbol{1}_{F_{i,j}}.
\]
So by (\ref{eq:EFfibers}), $\E_{A_i}\alpha_{i,j}<\e$ and $\E_{A_i}\alpha^*_{i,j}<\e$.\clqed\medskip

Next, set $k_1=\max\{|\calE_i|:i\in[s_1]\}$. Note that 
\[
|Y|\geq |B_1|+\ldots+|B_{s_2}|\geq s_2\lambda_2|Y|,
\]
and hence $s_2\leq\lambda_2\inv$. Thus for all $i\in [s_1]$, $|\calE_i|\leq 2s_2\leq 2\lambda_2\inv$, hence $k_1\leq 2\lambda_2\inv$. Altogether, by Claim 1 and since $X$ is sufficiently large, we can apply  Lemma \ref{lem:equi2}  to the partition $X=A_0\cup A_1\cup\ldots\cup A_{s_1}$ with $s_1,k_1\geq 1$, $\e,\lambda_1,\nu>0$, and $\calE_1,\ldots,\calE_{s_1}$. This yields an integer $\ell_1<2(\lambda_1\nu)\inv$ and a partition $X=X'_0\cup X'_1\cup\ldots\cup X'_{\ell_1}$ satisfying the following properties:
\begin{enumerate}[$(i)$]
\item $|X'_0|<(\rho+\nu)|X|$ and $|X'_1|=\ldots=|X'_{\ell_1}|$.
\item For all $i\in [\ell_1]$ there is some $\sigma(i)\in [s_1]$ such that $X'_i\seq A_{\sigma(i)}$ and $X'_i$ is $\e_1$-small for $\calE_{\sigma(i)}$, where $\e_1=((1-\nu)\inv\e+\nu)$. 
\end{enumerate}
In particular, given $(i,j)\in[\ell_1]\times [s_2]$, property $(ii)$ implies
\begin{equation}\label{eq:EX'_i}
\E_{X'_i}\E_{B_j}\boldsymbol{1}_{E_{\sigma(i),j}}(x,y)<\e_1\quad\text{and}\quad \E_{X'_i}\E_{B_j}\boldsymbol{1}_{F_{\sigma(i),j}}(x,y)<\e_1.
\end{equation}

We next aim to similarly refine $Y=B_0\cup B_1\cup\ldots\cup B_{s_2}$. Given $(i,j)\in[\ell_1]\times[s_2]$, define the maps $\beta_{i,j},\beta^*_{i,j}\colon Y\to [0,1]$ such that, given $y\in Y$,
\[
\beta_{i,j}(y)=\E_{X'_i}\boldsymbol{1}_{E_{\sigma(i),j}}(x,y)\quad\text{ and}\quad \beta^*_{i,j}(y)=\E_{X'_i}\boldsymbol{1}_{F_{\sigma(i),j}}(x,y).
\]
Given $j\in[s_2]$, define $\cF_j=\{\beta_{i,j}:i\in[\ell_1]\}\cup\{\beta^*_{i,j}:i\in[\ell_1]\}$. Set $k_2=\max\{|\cF_j|:j\in[s_2]\}$, and note that $k_2\leq 2\ell_1<4(\lambda_1\nu)\inv$. Also, if $j\in[s_2]$ then $B_j$ is $\e_1$-small for $\cF_j$ by (\ref{eq:EX'_i}) and Fubini. 
Thus we can apply Lemma \ref{lem:equi2} to the partition $Y=B_0\cup B_1\cup\ldots\cup B_{s_2}$ with $s_2,k_2\geq 1$, $\e_1,\lambda_2,\nu>0$, and $\cF_1,\ldots,\cF_{s_2}$. This yields an integer $\ell_2<2(\lambda_2\nu)\inv$ and a partition $Y=Y'_0\cup Y'_1\cup\ldots \cup Y'_{\ell_2}$ satisfying the following properties. 
\begin{enumerate}
\item[$(iii)$] $|Y'_0|<(\rho+\nu)|Y|$ and $|Y'_1|=\ldots=|Y'_{\ell_2}|$.
\item[$(iv)$] For all $j\in [\ell_2]$ there is some $\tau(j)\in [s_2]$ such that $Y'_j\seq B_{\tau(j)}$ and $Y'_j$ is $\e_2$-small for $\calF_{\tau(j)}$, where $\e_2=((1-\nu)\inv\e_1+\nu)$. 
\end{enumerate}

Consider the two partitions $X=X'_0\cup X'_1\cup\ldots\cup X'_{\ell_1}$ and $Y=Y'_0\cup Y'_1\cup\ldots\cup Y'_{\ell_2}$. Construct equipartitions
\[
X=X_1\cup\ldots\cup X_{\ell_1}\quad\text{and}\quad Y=Y_1\cup\ldots\cup Y_{\ell_2}
\]
by distributing $X'_0$ to $X'_1,\ldots,X'_{\ell_1}$ and $Y'_0$ to $Y'_1,\ldots,Y'_{\ell_2}$ as evenly as possible. We will show that these are the desired partitions in the statement of the theorem.

Toward this end, first recall that $\ell_1<2(\lambda_1\nu)\inv$ and $\ell_2<2(\lambda_2\nu)\inv$. Also, given $i\in[\ell_1]$ and $j\in[\ell_2]$, since $X$ and $Y$ are sufficiently large, we have 
\begin{equation}\label{eq:distoff}
|X_i\backslash X'_i|< (\rho+2\nu)|X_i|\quad\text{and}\quad |Y_j\backslash Y'_j|< (\rho+2\nu)|Y_j|,
\end{equation}
by $(i)$, $(iii)$, and Remark \ref{rem:distribute}. 

Unraveling $\e_2$ and $\e_1$, we have
\[
\e_2=\frac{\e}{(1-\nu)^2}+\frac{\nu}{1-\nu}+\nu=\left(1+\frac{2-\nu}{(1-\nu)^2}\nu\right)\e+\frac{\nu}{1-\nu}+\nu\leq (1+6\nu)\e+3\nu,
\]
where the final inequality uses $\nu\leq\frac{1}{2}$. In particular, we have $\e_3\coloneqq \e_2+4\nu+2\rho\leq\tilde{\e}$.
So to prove the theorem, it suffices to show that for all $(i,j)\in [\ell_1]\times[\ell_2]$, the pair $(X_i,Y_j)$ is weakly $(\delta_1,\delta_2;\e_3)$-good with respect to $f$. 

Fix $(i,j)\in [\ell_1]\times[\ell_2]$, and set $i^*=\sigma(i)$ and $j^*=\tau(j)$. So $X'_i\seq A_{i^*}$ and $Y'_j\seq B_{j^*}$. Define
\[
E^*_{i,j} = \big(E_{i^*,j^*}\cap (X'_i\times Y'_j)\big) \cup \big((X_i\backslash X'_i)\times Y_j\big)\cup \big(X_i\times (Y_j\backslash Y'_j)\big).
\]
Note that $E^*_{i,j}\seq X_i\times Y_j$. In the next claim, we use  $E^*_{i,j}$ to witness that $X_i$ is weakly $(\delta_1,\e_3)$-good with respect to $f|_{X_i\times Y_j}$.

\noindent\textit{Claim 2.}$~$
\begin{enumerate}[$(a)$]
\item $|E^*_{i,j}|<\e_3|X_i||Y_j|$.
\item For all $y\in Y_j$, $f_y$ is $\delta_1$-constant on $X_i\backslash E^*_{i,j}(X_i,y)$.
\end{enumerate}

\noindent\textit{Proof.} Part $(a)$. First, by definition of $E^*_{i,j}$ and  (\ref{eq:distoff}), we have
\begin{equation}\label{eq:E*ij1}
|E^*_{i,j}|\leq |E_{i^*,j^*}\cap (X'_i\times Y'_j)|+(2\rho+4\nu)|X_i||Y_j|.
\end{equation}
Moreover
\begin{equation}\label{eq:E*ij2}
|E_{i^*,j^*}\cap (X'_i\times Y'_j)|=|X'_i||Y'_j|\E_{ Y'_j}\E_{X'_i}\boldsymbol{1}_{E_{i^*,j^*}}=|X'_i||Y'_j|\E_{Y'_j}\beta_{i,j^*}<
 \e_2|X_i||Y_j|,
\end{equation}
where the final inequality uses $|X'_i|\leq |X_i|$, $|Y'_j|\leq |Y_j|$, and the fact that  $Y'_j$ is $\e_2$-small for $\cF_{j^*}$ (by $(iv)$). Combining (\ref{eq:E*ij1}) and (\ref{eq:E*ij2}) yields $|E^*_{i,j}|<\e_3|X_i||Y_j|$ by definition of $\e_3$. 

Part $(b)$. Fix $y\in Y_j$. If $y\not\in Y'_j$ then $E^*_{i,j}(X_i,y)=X_i$ by definition of $E^*_{i,j}$, and the claim is trivial. So assume $y\in Y'_j$, and set $W=X_i\backslash E^*_{i,j}(X_i,y)$. By definition of $E^*_{i,j}$, $W=X'_i\backslash E_{i^*,j^*}(X'_i,y)$. Since $X'_i\seq A_{i^*}$, this implies $W\seq A_{i^*}\backslash E_{i^*,j^*}(A_{i^*},y)$. Since $y\in Y'_j\seq B_{j^*}$, we conclude that $f_y$ is $\delta_1$-constant on $W$ by choice of $E_{i^*,j^*}$.\clqed

Finally, by a symmetric argument, the set 
\[
F^*_{i,j} = \big(F_{i^*,j^*}\cap (X'_i\times Y'_j)\big) \cup \big((X_i\backslash X'_i)\times Y_j\big)\cup \big(X_i\times (Y_j\backslash Y'_j)\big)
\]
witnesses that $Y_j$ is weakly $(\delta_2,\e_3)$-good with respect to $f|_{X_i\times Y_j}$. Therefore, the pair $(X_i,Y_j)$ is weakly $(\delta_1,\delta_2;\e_3)$-good with respect to $f$, as desired.
\end{delayedproof}

\begin{remark}
In the previous proof, the ``$\delta$-constant" aspect of the situation is largely irrelevant to the mechanics of the argument. In particular, given a set $X$, suppose $\calP\seq\bigcup_{A\seq X}[0,1]^A$ is a class of functions closed under arbitrary restrictions.   Given $\e>0$ and $f\colon X\times Y\to[0,1]$, define $A\seq X$ to be \emph{weakly $(\calP,\e)$-good} if there is $E\seq A\times Y$ such that $|E|<\e|A||Y|$ and, for every $y\in Y$, the restriction of $f_y$ to $A\backslash E(A,y)$ is in $\calP$. Thus weak $(\delta,\e)$-goodness is the special case where $\calP$ consists of all $\delta$-constant $[0,1]$-valued functions on subsets of $X$. Given restriction-closed $\calP_1\seq \bigcup_{A\seq X}[0,1]^B$ and $\calP_2\seq\bigcup_{B\seq Y}[0,1]^B$, define weakly $(\calP_1,\calP_2;\e)$-good pairs for a function $f\colon X\times Y\to [0,1]$ in the analogous way.   Then the corresponding version of Theorem \ref{thm:equi} follows by the same proof.
\end{remark}

As discussed in the introduction, Theorem \ref{thm:equi} differs significantly from the analogous approach for graphs in \cite{MS} in that we do not need to assume any level of ``tameness" for the function (e.g., analytic analogues of VC-dimension). The cost of this is that the proof is significantly more involved, and the result only holds for weakly good sets. In Appendix \ref{sec:appendixB}, we will show that under further combinatorial assumptions closely related to bounded VC-dimension, one can  prove an analogue of Theorem \ref{thm:equi} for partitions consisting of fully good sets (see Proposition \ref{prop:equi-cover}). However, the conclusions are numerically weaker than what we are able to obtain via Theorem \ref{thm:equi} (see the discussion following Proposition \ref{prop:equi-cover}).

\section{Stable regularity with equipartitions}\label{sec:mainproof2}

We now prove Theorem \ref{thm:maintree}$(b)$ (regularity with equipartitions for functions omitting trees).

\begin{proof}[\textnormal{\textbf{Proof of Theorem \ref{thm:maintree}$(b)$}}]
Fix  $t,t'\geq 1$ and $\delta,\delta',\e>0$ with $\e<1$.
 Let $X$ and $Y$ be sufficiently large finite sets, and suppose $f\colon X\times Y\to [0,1]$ is  such that $f$ omits $(t,\delta)$-trees and $f^{opp}$ omits $(t',\delta')$-trees. Set $\e_0=\frac{\e}{17}$.
Apply Corollary \ref{cor:treeconstant2}  to $f^{opp}$ (resp., $f$) with parameters $t',\delta',\e_0$ (resp., $t,\delta,\e_0$) to obtain $s_1\leq \e_0^{-t'}$, $s_2\leq \e_0^{-t}$, and partitions
\[
X=A_0\cup A_1\cup \ldots \cup A_{s_1}\text{ and }Y=B_0\cup B_1\cup \ldots \cup B_{s_2}
\]
satisfying the following properties:
\begin{enumerate}[$(i)$]
\item  $|A_0|< \e_0|X|$ and $|B_0|< \e_0|Y|$.
\item For each $i\in[s_1]$, $|A_i|\geq \e_0^{t'}|X|$ and $A_i$ is $(\delta',2\e_0)$-good with respect to $f$.
\item For each $j\in[s_2]$, $|B_j|\geq \e_0^{t}|Y|$ and $B_j$ is $(\delta,2\e_0)$-good with respect to $f$.
\end{enumerate}
Then for each $(i,j)\in [s_1]\times [s_2]$, the pair $(A_i,B_j)$ is  $(\delta',\delta;2\e_0)$-good with respect to $f$. So we can apply Theorem \ref{thm:equi} with parameters $\delta^*_1$, $\delta^*_2$, $\e^*$, $\lambda^*_1$,  $\lambda^*_2$, $\nu^*$, and $\rho^*$ chosen as follows: $\delta^*_1=\delta'$, $\delta^*_2=\delta$, $\e^*=2\e_0$, $\lambda^*_1=\e_0^{t'}$, $\lambda^*_2=\e_0^t$,  $\nu^*=\e/527$, and $\rho^*=\e_0$. This yields $\ell_1<2(\lambda^*_1\nu^*)\inv=62(17/\e)^{t'+1}$, $\ell_2<2(\lambda^*_2\nu^*)\inv=62(17/\e)^{t+1}$, and equipartitions
\[
X=X_1\cup\ldots\cup X_{\ell_1}\quad\text{and}\quad Y=Y_1\cup\ldots\cup Y_{\ell_2}
\]
such that, for all $(i,j)\in [\ell_1]\times [\ell_2]$, $(X_i,Y_j)$ is  weakly $(\delta',\delta;\tilde{\e})$-good with respect to $f$, where $\tilde{\e}=(1+6\nu^*)\e^*+7\nu^*+2\rho^*$. Therefore, by Lemma \ref{lem:twosticks},  for all $(i,j)\in[\ell_1]\times[\ell_2]$, $(X_i,Y_j)$ is $4\widetilde{\e}$-almost $(\delta+\delta')$-constant with respect to $f$. Finally, one can calculate that
\[
4\tilde{\e}=\frac{16\e}{17}+\frac{48\e^2}{8959}+\frac{28\e}{527}<\e.\qedhere
\]
\end{proof}

Recall from Remark \ref{rem:thm1}$(3)$ that in the case of a symmetric function $f\colon X\times X\to [0,1]$, our  regularity results with $f$-definable partitions (Theorem \ref{thm:maintree}$(a)$) automatically produce the same partition of $X$ on both sides. However, this is not the case for Theorem \ref{thm:maintree}$(b)$, which relies on our equipartitioning result for weakly good sets (Theorem \ref{thm:equi}). In particular, the proof of Theorem \ref{thm:equi} is inherently asymmetric, and in general will produce different partitions even when applied to symmetric functions. On the other hand, in the symmetric case we can use the easier equipartitioning result for good sets given by Proposition \ref{prop:equisym}. This yields the following analogue of Theorem \ref{thm:maintree}$(b)$ for symmetric functions.

\begin{theorem}\label{thm:1.5sym}
Fix  $t\geq 1$ and $\delta,\e>0$ with $\e<1$.
Let $X$ be a sufficiently large finite set, and suppose $f\colon X\times X\to [0,1]$ is a symmetric function omitting $(t,\delta)$-trees. 
Then there is an integer $\ell< 28(13/\e)^{t+1}$ and an equipartition $X=X_1\cup \ldots \cup X_{\ell}$ such that, for all $(i,j)\in[\ell]\times[\ell]$, $(X_i,X_j)$ is $\e$-almost $2\delta$-constant with respect to $f$.
\end{theorem}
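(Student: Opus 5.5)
The plan is to mirror the proof of Theorem~\ref{thm:maintree}$(b)$, but to replace Theorem~\ref{thm:equi} with the one-sided, fully-good equipartitioning result, Proposition~\ref{prop:equisym}. This is possible because symmetry lets a single partition of $X$ serve both coordinates. Set $\e_0=\e/13$.

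\textbf{Step 1: partition into good sets.} Since $f$ is symmetric, $f^{opp}=f$. Apply Corollary~\ref{cor:treeconstant2} to $f$ with parameters $t,\delta,\e_0$. This gives $s\leq\e_0^{-t}$ and a partition $X=A_0\cup A_1\cup\ldots\cup A_s$ with the following properties:
\begin{enumerate}[$(i)$]
\item $|A_0|<\e_0|X|$;
\item for each $i\in[s]$, $|A_i|\geq\e_0^t|X|$;
\item for each $i\in[s]$, $A_i$ is $(\delta,2\e_0)$-good with respect to $f$.
\end{enumerate}

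\textbf{Step 2: equipartition.} Apply Proposition~\ref{prop:equisym} with $Y=X$, $\e^*=2\e_0$, $\lambda=\e_0^t$, $\rho=\e_0$ and $\nu=\e/182$. The size hypothesis $|X|\geq\Omega_{\lambda,\nu}(\log 2|X|)$ holds once $X$ is sufficiently large, since the right-hand side is logarithmic in $|X|$. This yields an equipartition $X=X_1\cup\ldots\cup X_\ell$ with
\[
\ell<2(\lambda\nu)^{-1}=\frac{364}{\e}\Big(\frac{13}{\e}\Big)^{t}=28\Big(\frac{13}{\e}\Big)^{t+1},
\]
and with each $X_i$ being $(\delta,\tilde\e)$-good with respect to $f$, where $\tilde\e=(1+2\nu)2\e_0+3\nu+\e_0$.

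\textbf{Step 3: pairs are good.} Fix $(i,j)\in[\ell]\times[\ell]$. Goodness of $X_i$ quantifies over all $y\in X$, so it persists for $y\in X_j$. Hence $X_i$ is $(\delta,\tilde\e)$-good with respect to $f|_{X_i\times X_j}$. By symmetry of $f$, the goodness of $X_j$ with respect to $f$ is exactly goodness with respect to $f^{opp}$. This gives that $X_j$ is $(\delta,\tilde\e)$-good with respect to $(f|_{X_i\times X_j})^{opp}$.

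Full goodness implies weak goodness. To see this, take $E=\bigcup_y E_y\times\{y\}$, where $E_y$ is the exceptional set for the fiber $f_y$. Then $|E|<\tilde\e|X_i||X_j|$. So $(X_i,X_j)$ is weakly $(\delta,\delta;\tilde\e)$-good.

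\textbf{Step 4: conclude.} Lemma~\ref{lem:twosticks} shows that $(X_i,X_j)$ is $4\tilde\e$-almost $2\delta$-constant. It remains to check $4\tilde\e<\e$. Expanding,
\[
4\tilde\e=\frac{12\e}{13}+\frac{16\nu\e}{13}+12\nu=\frac{90\e}{91}+\frac{16\e^2}{13\cdot182}<\e,
\]
where the last inequality uses $\e<1$: it amounts to $16\e/(13\cdot 182)<1/91$, i.e.\ $16\e<26$.

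\textbf{Main obstacle.} The only delicate points are checking that the size hypothesis of Proposition~\ref{prop:equisym} is met when $Y=X$, and tuning $\nu$ so that the constant $28$ and the base $13$ both come out. The essential step is Step~3, which uses symmetry to make a single partition good on both sides.
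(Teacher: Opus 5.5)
Your proposal is correct and matches the paper's proof essentially step for step. The paper also applies Corollary \ref{cor:treeconstant2} with $\e_0=\e/13$, then Proposition \ref{prop:equisym} with $Y=X$, $\lambda=\e_0^t$, $\nu=\e/182$, $\rho=\e_0$, uses symmetry to get good pairs, and finishes with Lemma \ref{lem:twosticks}. Your explicit checks fill in details the paper leaves implicit: that good implies weakly good, that the $\log 2|X|$ size hypothesis holds, and that the final arithmetic gives $4\tilde\e=\frac{90\e}{91}+\frac{16\e^2}{2366}<\e$.
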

\begin{proof}
  Set $\e_0=\frac{\e}{13}$.
Apply Corollary \ref{cor:treeconstant2}  to $f$  with parameters $t,\delta,\e_0$  to obtain $s\leq \e_0^{-t}$ and a  partition
\[
X=A_0\cup A_1\cup \ldots \cup A_{s}
\]
such that $|A_0|< \e_0|X|$ and, for each $i\in[s]$, $|A_i|\geq \e_0^{t}|X|$ and $A_i$ is $(\delta,2\e_0)$-good with respect to $f$. Apply Proposition \ref{prop:equisym} with $X=Y$ and with  parameters $\delta^*$,  $\e^*$, $\lambda^*$,  $\nu^*$, and $\rho^*$ chosen as follows: $\delta^*=\delta$, $\e^*=2\e_0$, $\lambda^*=\e_0^{t}$,  $\nu^*=\e/182$, and $\rho^*=\e_0$. This yields $\ell<2(\lambda^*\nu^*)\inv=28(13/\e)^{t+1}$ and an equipartition
\[
X=X_1\cup\ldots\cup X_{\ell}
\]
such that, for all $i\in[\ell]$, $X_i$ is $(\delta,\tilde{\e})$-good with respect to $f$, where $\tilde{\e}=(1+2\nu^*)\e^*+3\nu^*+\rho^*$.

Now, since $f$ is symmetric, it follows that for any $(i,j)\in[\ell]\times[\ell]$, the pair $(X_i,X_j)$ is  $(\delta,\delta;\tilde{\e})$-good with respect to $f$, and thus $4\tilde{\e}$-almost $2\delta$-constant with respect to $f$ by  Lemma \ref{lem:twosticks}. Finally, one can calculate that
\[
4\tilde{\e}=\frac{12\e}{13}+\frac{8\e^2}{1183}+\frac{6\e}{91}<\e.\qedhere
\]
\end{proof}

We also obtain a version in terms of omitting ladders. This follows from Theorem \ref{thm:1.5sym} in exactly the same way that we deduced Theorems \ref{thm:1c} and \ref{thm:1ceq} from Theorem \ref{thm:maintree} in Subsection \ref{sec:maintreestatements}.

\begin{theorem}\label{thm:1.csym}
Fix  $k\geq 1$, $\delta>0$, and $0<\e<1$.
Let $X$ be a sufficiently large finite set, and suppose $f\colon X\times X\to [0,1]$ is a symmetric function omitting $(k,\delta)$-ladders. 
Then there is an integer $\ell< 28(13/\e)^{4^k}$ and an equipartition $X=X_1\cup \ldots \cup X_{\ell}$ such that, for all $(i,j)\in[\ell]\times[\ell]$, $(X_i,X_j)$ is $\e$-almost $4\delta$-constant with respect to $f$.
\end{theorem}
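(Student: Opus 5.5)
The plan is to derive Theorem \ref{thm:1.csym} from Theorem \ref{thm:1.5sym} via Theorem \ref{thm:hodgesfn}, in the same way Theorems \ref{thm:1c} and \ref{thm:1ceq} were obtained from Theorem \ref{thm:maintree}. Suppose $f\colon X\times X\to[0,1]$ is symmetric and omits $(k,\delta)$-ladders. Set $t=\binom{2k}{k}-1$; note $t\geq 1$ since $k\geq 1$.

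\emph{Step 1.} I claim $f$ omits $(t,2\delta)$-trees. If $f$ admitted a $(t,2\delta)$-tree, then Theorem \ref{thm:hodgesfn} (applied with $\delta$ in place of its $\delta$) would produce a $(k,\delta)$-ladder for $f$, a contradiction.

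\emph{Step 2.} Apply Theorem \ref{thm:1.5sym} to $f$ with parameters $t$, $2\delta$, and $\e$. This requires $X$ to be sufficiently large, where the threshold depends on $t$, $2\delta$, and $\e$, and hence only on $k$, $\delta$, $\e$. It yields an integer $\ell<28(13/\e)^{t+1}$ and an equipartition $X=X_1\cup\ldots\cup X_\ell$ such that every pair $(X_i,X_j)$ is $\e$-almost $2(2\delta)=4\delta$-constant with respect to $f$, which is exactly the required conclusion.

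\emph{Step 3.} It remains to bound the number of parts. We have $t+1=\binom{2k}{k}$. The inequality $2\binom{2k}{k}\leq 4^k$ was already noted in the proof of Theorem \ref{thm:1c}, so $\binom{2k}{k}\leq 4^k$. Since $13/\e>1$, this gives $28(13/\e)^{t+1}\leq 28(13/\e)^{4^k}$, so $\ell<28(13/\e)^{4^k}$.

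There is no real obstacle here; the only point needing care is tracking the doubling of $\delta$ between the ladder and tree formulations. That doubling is what turns the $2\delta$ in Theorem \ref{thm:1.5sym} into $4\delta$ in the present statement.
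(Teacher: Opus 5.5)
Your proposal is correct and is essentially the paper's own argument. Theorem~\ref{thm:hodgesfn} with $t=\binom{2k}{k}-1$ shows that $f$ omits $(t,2\delta)$-trees, and Theorem~\ref{thm:1.5sym} applied with $2\delta$ then gives $4\delta$-constant pairs with $\ell<28(13/\e)^{\binom{2k}{k}}\leq 28(13/\e)^{4^k}$, exactly as in the derivations of Theorems~\ref{thm:1c} and~\ref{thm:1ceq}.
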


\appendix

\section{Further discussion of Theorem \ref{thm:CPC2}}\label{sec:appendix}

Recall that  Theorem \ref{thm:CPC2} states the non-quantitative regularity lemma for stable functions proved by Chavarria, Conant, and Pillay \cite{CPC}. As previously mentioned, this result does not appear in \cite{CPC} exactly in this form. Instead, the closest comparable result is \cite[Theorem 5.2]{CPC}. The purpose of this appendix is to reconcile the differences between Theorem \ref{thm:CPC2} and \cite[Theorem 5.2]{CPC}. We will  then derive precise quantitative bounds for Theorem \ref{thm:CPC2} and \cite[Theorem 5.2]{CPC}.

In order to state \cite[Theorem 5.2]{CPC}, we need some definitions. First, \cite[Theorem 5.2]{CPC}  is not stated in terms of almost constant pairs, but rather with the following notion of homogeneity.

\begin{definition}[\cite{CPC}]
Fix a function $f\colon X\times Y\to[0,1]$ and subsets $A\seq X$ and $B\seq Y$. Given $\delta,\e>0$, we say $(A,B)$ is \emph{$(\delta,\epsilon)$-homogeneous for $f$} if there are $r,s\in [0,1]$  satisfying the following properties.\footnote{In \cite{CPC}, the subsequent properties are stated with weak inequalities in both $\approx_\delta$ and the occurrences of ``$\e$-almost". This minor change in our conventions does not affect the statement of \cite[Theorem 5.2]{CPC}.}
\begin{enumerate}[$(i)$]
\item For $\epsilon$-almost all $a\in A$, for $\epsilon$-almost all $b\in B$, $f(a,b)\approx_\delta r$, i.e.,
\[
\big|\big\{a\in A:|\{b\in B:f(a,b)\approx_\delta r\}|> (1-\epsilon)|B|\big\}\big|> (1-\epsilon)|A|.
\]
\item For $\epsilon$-almost all $b\in B$, for $\epsilon$-almost all $a\in A$, $f(a,b)\approx_\delta s$, i.e.,
\[
\big|\big\{b\in B:|\{a\in A:f(a,b)\approx_\delta s\}|> (1-\epsilon)|A|\big\}\big|> (1-\epsilon)|B|.
\]
\end{enumerate}
\end{definition}

This notion extends a graph-theoretic analogue found in work of  Malliaris and Pillay \cite{MP}. The definition avoids direct reference to product measures, which simplifies some aspects of the model-theoretic setting used in \cite{MP}.

Next, \cite[Theorem 5.2]{CPC} is  not stated in terms of omitting ladders, but rather with a related configuration that we will call an agnostic ladder.\footnote{This configuration is often used to define local stability  in continuous logic, e.g. \cite[Definition 7.1]{BYU}.}

\begin{definition}\label{def:agnostic}
Given $k\geq 1$ and $\delta>0$, an \emph{agnostic $(k,\delta)$-ladder for $f\colon X\times Y\to [0,1]$} consists of sequences $x_1,\ldots,x_k\in X$ and $y_1,\ldots,y_k\in Y$ such that, for all distinct $i,j\in[k]$, 
\[
|f(x_i,y_j)-f(x_j,y_i)|\geq\delta.
\]
\end{definition}

\begin{remark}\label{rem:agnosticsym}
Note that, unlike ladders, agnostic ladders are ``symmetric" in the sense that if $f\colon X\times Y\to [0,1]$ admits an agnostic $(k,\delta)$-ladder, then so does $f^{opp}$. 
\end{remark}

We can now state \cite[Theorem 5.2]{CPC}.\footnote{We replace the ``$(k,\delta)$-stability" terminology from \cite{CPC} with our equivalent terminology of omitting an agnostic $(k,\delta)$-ladder.}

\begin{theorem}[Chavarria, Conant, \& Pillay \cite{CPC}]\label{thm:CPC2alt}
Given $k\geq 1$ and $\delta,\epsilon>0$,  there is a constant $M$ such that the following holds.  Suppose $f\colon X\times Y\rightarrow [0,1]$ omits agnostic $(k,\delta)$-ladders. Then there are $m_1,m_2\leq M$ and partitions $X=X_1\cup \ldots \cup X_{m_1}$ and $Y=Y_1\cup \ldots \cup Y_{m_2}$ such that  for all $(i,j)\in[m_1]\times[m_2]$, $(X_i,Y_j)$ is $(5\delta+\e,\e)$-homogeneous with respect to $f$.
\end{theorem}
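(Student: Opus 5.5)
The plan is to deduce Theorem \ref{thm:CPC2alt}, with explicit bounds, from Theorem \ref{thm:1c}. There are three steps: convert the agnostic-ladder hypothesis into ordinary ladder bounds for both $f$ and $f^{opp}$; apply Theorem \ref{thm:1c}; and convert almost constant pairs into homogeneous pairs. Throughout, fix an auxiliary parameter $\gamma>0$, to be chosen at the end as $\gamma=\e/4$.

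\emph{Step 1 (agnostic ladders versus ladders).} I claim the following. If $f$ admits a $(\lceil\gamma\inv\rceil k,\delta+\gamma)$-ladder, then $f$ admits an agnostic $(k,\delta)$-ladder.
\begin{itemize}
\item Take a ladder $(x_i),(y_i),(r_i)$ of length $\lceil\gamma\inv\rceil k$.
\item Partition $[0,1]$ into $\lceil\gamma\inv\rceil$ intervals of length at most $\gamma$.
\item By pigeonhole, some $k$ indices $i_1<\ldots<i_k$ have all $r_{i_a}$ in one interval.
\item For $a<b$, the ladder gives $f(x_{i_a},y_{i_b})\geq r_{i_a}+\delta+\gamma\geq r_{i_b}+\delta$ and $f(x_{i_b},y_{i_a})\leq r_{i_b}$.
\end{itemize}
Hence $|f(x_{i_a},y_{i_b})-f(x_{i_b},y_{i_a})|\geq\delta$ for all distinct $a,b$, which is an agnostic $(k,\delta)$-ladder. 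By Remark \ref{rem:agnosticsym}, $f^{opp}$ also omits agnostic $(k,\delta)$-ladders. Applying the claim to both functions, $f$ and $f^{opp}$ both omit $(\lceil\gamma\inv\rceil k,\delta+\gamma)$-ladders.

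\emph{Step 2 (regularity).} Apply Theorem \ref{thm:1c} with $k=k'=\lceil\gamma\inv\rceil k$, $\delta=\delta'=\delta+\gamma$, and error parameter $\eta=\e^2$. This yields partitions with
\[
m_1,m_2\leq(9/\e^2)^{4^{\lceil\gamma\inv\rceil k}}.
\]
Every pair $(X_i,Y_j)$ is then $\e^2$-almost $4(\delta+\gamma)$-constant. With $\gamma=\e/4$, this constant is $4\delta+\e\leq 5\delta+\e$. These $m_1,m_2$ give the explicit bound for $M$.

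\emph{Step 3 (almost constant implies homogeneous).} Fix a pair $(A,B)=(X_i,Y_j)$. Let $E\seq A\times B$ satisfy $|E|>(1-\e^2)|A||B|$, with $f$ being $(4\delta+\e)$-constant on $E$.
\begin{itemize}
\item Pick any $(a_0,b_0)\in E$ and set $r=s=f(a_0,b_0)$. Then $f(a,b)\approx_{5\delta+\e}r$ for all $(a,b)\in E$.
\item Let $A_{bad}=\{a\in A:|E(a,B)|\leq(1-\e)|B|\}$. Counting the complement of $E$ gives $|A_{bad}|\,\e|B|\leq|(A\times B)\setminus E|<\e^2|A||B|$.
\item Hence $|A_{bad}|<\e|A|$, and every $a\notin A_{bad}$ satisfies $f(a,b)\approx r$ for more than $(1-\e)|B|$ many $b$. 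This is condition $(i)$.
\item Condition $(ii)$ follows symmetrically, by counting over $b\in B$.
\end{itemize}
So each pair is $(5\delta+\e,\e)$-homogeneous.

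The only step needing care is Step 1, where the thresholds $r_i$ vary along the ladder. The pigeonhole on thresholds resolves this, at the cost of the factor $\lceil\gamma\inv\rceil$ in the ladder length. This factor is what forces $M$ to be exponential in $\e\inv$ in the exponent $4^{(\cdot)}$. The remaining steps are routine Markov-type counting.
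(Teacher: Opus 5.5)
Your proof is correct, but it does not follow the paper's route. The paper only quotes this statement from \cite{CPC}, where it is proved by non-quantitative model-theoretic methods. Its quantitative recovery is Theorem \ref{thm:CPCQ1}, combined with part $(a)$ of the proposition in Appendix \ref{sec:appendix} relating almost constant and homogeneous pairs; that part is your Step 3.

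In Theorem \ref{thm:CPCQ1} the paper does not convert agnostic ladders into ordinary ladders. It uses Theorem \ref{thm:hodgesfnOP} to go straight to trees, so that $f$ and $f^{opp}$ omit $(\lceil 2\gamma\inv\rceil 4^k,2\delta+\frac{\gamma}{2})$-trees, and then applies Theorem \ref{thm:maintree}. The paper explicitly considers your route, Proposition \ref{prop:agnosticladders}$(a)$ followed by Theorem \ref{thm:1c}, and sets it aside for bound reasons. The slack factor $\lceil\gamma\inv\rceil$ multiplies the ladder length, which then lands inside the exponent $4^{(\cdot)}$ coming from the ladder-to-tree conversion in Theorem \ref{thm:hodgesfn}. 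This gives your $M=(9/\e^2)^{4^{\lceil 4/\e\rceil k}}$, doubly exponential in $1/\e$. Passing through trees instead gives $(9/\e^2)^{2\lceil 2\gamma\inv\rceil 4^k}$, singly exponential in $1/\gamma$.

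What your route buys is economy. You need only an elementary pigeonhole plus Theorem \ref{thm:1c}, with no need for the uniform-ladder version of the Hodges-type theorem from the companion paper. For the non-quantitative statement, that is entirely sufficient.

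Two small points. First, you need $\e<1$ to apply Theorem \ref{thm:1c} with error $\e^2$. For $\e\geq 1$ the statement is trivial with $m_1=m_2=1$, since $5\delta+\e>1$. Second, in Step 3 you could take $r$ to be the midpoint of the range of $f$ on $E$, as the paper does, rather than $f(a_0,b_0)$. That gives $(2\delta+\e/2,\e)$-homogeneity, well within the required $5\delta+\e$.
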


Let us now reconcile the differences between Theorems \ref{thm:CPC2alt} and \ref{thm:CPC2}. First, we have the following quantitative relationship between homogeneous pairs and almost constant pairs.

\begin{proposition}
Fix  $f\colon X\times Y\to[0,1]$, subsets $A\seq X$ and $B\seq Y$, and $\delta,\e>0$.
\begin{enumerate}[$(a)$]
\item If $(A,B)$ is $\epsilon^2$-almost $2\delta$-constant with respect to $f$ then it is $(\delta,\epsilon)$-homogeneous for $f$.
\item If $(A,B)$ is $(\delta,\epsilon)$-homogeneous for $f$ then it is $2\epsilon$-almost $2\delta$-constant with respect to $f$. 
\end{enumerate}
\end{proposition}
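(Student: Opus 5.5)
Both parts reduce to elementary counting in $A\times B$. For part $(a)$ the key step is fixing the constant $r=s$ as the value of $f$ at a point of the large set; for part $(b)$ we simply count bad pairs and shrink $\delta$.

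\emph{Part $(a)$.} Fix $E\seq A\times B$ with $|E|>(1-\e^2)|A||B|$ on which $f$ is $2\delta$-constant. Let $m=\min_E f$ and $M=\max_E f$, so $M-m<2\delta$. Set $r=s=(m+M)/2$. Then every $(a,b)\in E$ satisfies $|f(a,b)-r|\leq (M-m)/2<\delta$, so $f(a,b)\approx_\delta r$ on all of $E$.

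Call $a\in A$ \emph{bad} if $|E(a,B)|\leq(1-\e)|B|$. Each bad $a$ contributes more than $\e|B|$ $-$ wait, at least $\e|B|$ points of the complement $(A\times B)\setminus E$. The complement has fewer than $\e^2|A||B|$ points. Hence the number of bad $a$ is less than $\e|A|$, so more than $(1-\e)|A|$ elements $a$ have more than $(1-\e)|B|$ good $b$. This is condition $(i)$. Condition $(ii)$ follows by the same count with the roles of $A$ and $B$ swapped.

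\emph{Part $(b)$.} Let $A'$ be the set of $a$ witnessing $(i)$ for $r$, and let $B'$ be the set of $b$ witnessing $(ii)$ for $s$.

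Count the pairs with $f(a,b)\not\approx_\delta r$. Such pairs either have $a\notin A'$, giving fewer than $\e|A||B|$ pairs, or have $a\in A'$ with a bad $b$, giving fewer than $\e|A||B|$ pairs in total. So $P=\{(a,b):f(a,b)\approx_\delta r\}$ has $|P|>(1-2\e)|A||B|$.

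Any two points of $P$ differ in $f$-value by less than $2\delta$. So $P$ witnesses that $(A,B)$ is $2\e$-almost $2\delta$-constant. Condition $(ii)$ is not needed for this direction.

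The only subtle point is the choice of a common center $r=s$ in part $(a)$, which is what makes the strict inequality $\approx_\delta$ hold. The remaining steps are Markov-type counting and should cause no difficulty.
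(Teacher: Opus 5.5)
Your argument is correct and is essentially the paper's proof. The paper's key observation is exactly your midpoint choice $r=\frac{1}{2}(\max f+\min f)$ (a function is $2\delta$-constant iff all its values lie strictly within $\delta$ of a single center). After that, $(a)$ is the Markov-type row/column count you give, and $(b)$ is the same counting followed by the triangle inequality.

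The only corner you skip is $\epsilon>1$. There the witnessing set $E$ may be empty, so $\min_E f$ is undefined, but conditions $(i)$ and $(ii)$ then hold trivially for any $r$.
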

\begin{proof}
The main observation is that, in the context of Definition \ref{def:deltaconstant}, a function $\phi\colon V\to [0,1]$ is $\delta$-constant if and only if there is some $r\in[0,1]$ such that for all $v\in V$, $\phi(v)\approx_{\delta/2} r$. Indeed, the converse direction is immediate from the triangle inequality, and for the forward direction, if $\phi$ is $\delta$-constant then choose $r=\frac{1}{2}(\max_{v\in V}\phi(v)+\min_{v\in V}\phi(v))$. 

With the above observation in hand, part $(a)$ of the proposition  follows easily from Markov's inequality, and part $(b)$ is a straightforward exercise.
\end{proof}

The previous result shows that the translation from homogeneous pairs  to  almost constant pairs amounts to a doubling of the ``$\delta$-constant" parameter. Consequently, while Theorem \ref{thm:CPC2} has $10\delta+\e$ in its conclusion,  Theorem \ref{thm:CPC2alt}  has $5\delta+\e$ (note that  the necessary changes to $\epsilon$ can be absorbed without affecting the statements).

Next, we state the relationship between ladders and agnostic ladders. This passes through the notion of a \emph{uniform $(k,\delta)$-ladder}, which is a $(k,\delta)$-ladder in which the values $r_1,\ldots,r_k$ are all equal. Note that a uniform $(k,\delta)$-ladder is an agnostic $(k,\delta)$-ladder and, moreover, uniform $(k,\delta)$-ladders satisfy the same symmetry as in Remark \ref{rem:agnosticsym}. Uniform ladders are studied in detail in our companion paper \cite{CT-hodges}. We list here just the implications we will need. Part $(a)$ is an easy pigeonhole argument (see \cite[Proposition A.1]{CT-hodges}), while part $(b)$ is a standard Ramsey argument (see \cite[Proposition A.2]{CPC} or \cite[Proposition A.2]{CT-hodges}).

\begin{proposition}\label{prop:agnosticladders}
Fix $k\geq 1$ and $\e>0$.
\begin{enumerate}[$(a)$]
\item For any $\delta>0$ and any function $f\colon X\times Y\to [0,1]$, if $f$ admits an $(\lceil\e\inv\rceil(k-1)+1,\delta+\e)$-ladder then $f$ admits a uniform $(k,\delta)$-ladder, and thus admits an agnostic $(k,\delta)$-ladder.
\item There is an integer $k'\geq 1$ such that for any $\delta>0$ and any function $f\colon X\times Y\to [0,1]$, if $f$ admits an agnostic $(k',\delta+\e)$-ladder then $f$ admits a uniform $(k,\delta)$-ladder.
\end{enumerate}
\end{proposition}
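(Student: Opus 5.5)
\emph{Part $(a)$} is a pigeonhole argument on the thresholds $r_i$. Set $N=\lceil\e\inv\rceil(k-1)+1$ and fix an $(N,\delta+\e)$-ladder $(x_i),(y_i),(r_i)$ for $f$. Since $f(x_i,y_i)\geq r_i+\delta+\e$ and $f\leq 1$, every $r_i\leq 1-\e<1$. Cover $[0,1)$ by the $\lceil\e\inv\rceil$ half-open intervals $[m\e,(m+1)\e)$. By pigeonhole, some interval $I$ contains $r_i$ for at least $k$ indices $i_1<\ldots<i_k$. Let $r=\max_{m} r_{i_m}\in[0,1]$. I will check that $(x_{i_m}),(y_{i_m})$ with the constant value $r$ is a uniform $(k,\delta)$-ladder:
\begin{itemize}
\item If $m>n$, then $f(x_{i_m},y_{i_n})\leq r_{i_m}\leq r$.
\item If $m\leq n$, then $r-r_{i_m}<\e$ because both lie in $I$, so $f(x_{i_m},y_{i_n})\geq r_{i_m}+\delta+\e>r+\delta$.
\end{itemize}
A uniform ladder is agnostic: for $i<j$ we have $f(x_i,y_j)\geq r+\delta$ and $f(x_j,y_i)\leq r$, so $|f(x_i,y_j)-f(x_j,y_i)|\geq\delta$.

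\emph{Part $(b)$} goes by Ramsey's theorem with a finite colouring. Fix $p=\lceil 2/\e\rceil$ and cut $[0,1]$ into $p$ bins of length at most $\e/2$. Given an agnostic $(k',\delta+\e)$-ladder $(x_i),(y_i)$, colour each pair $i<j$ by three pieces of data:
\begin{itemize}
\item the sign of $f(x_i,y_j)-f(x_j,y_i)$;
\item the bin containing $f(x_i,y_j)$;
\item the bin containing $f(x_j,y_i)$.
\end{itemize}
This uses at most $c=2p^2$ colours, which depends only on $\e$. Take $k'$ to be the $c$-colour Ramsey number guaranteeing a monochromatic set of size $2k+1$; this depends only on $k$ and $\e$, not on $\delta$ or $f$. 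Let $i_1<\ldots<i_{2k+1}$ be monochromatic.

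\emph{Reducing to one sign.} If the common sign is negative, reverse the order of the indices. After reversal every pair $i<j$ satisfies $f(x_i,y_j)\geq f(x_j,y_i)+\delta+\e$, and the two values still lie in fixed bins $J$ and $J'=[c_0,c_0+\e/2]$ respectively. Set $r=\min(c_0+\e/2,1)$.
\begin{itemize}
\item For $i<j$: $f(x_j,y_i)\leq r$.
\item For $i<j$: $f(x_i,y_j)\geq f(x_j,y_i)+\delta+\e\geq c_0+\delta+\e\geq r+\delta$.
\end{itemize}

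\emph{Handling the diagonal.} The remaining issue is that a ladder also requires $f(x_i,y_i)\geq r+\delta$, which the agnostic ladder does not control. I handle this by interleaving: put $x'_m=x_{i_{2m}}$ and $y'_n=y_{i_{2n+1}}$ for $m,n\in[k]$. If $m\leq n$ then $2m<2n+1$, and if $m>n$ then $2m>2n+1$. So every comparison, including the diagonal one, is an off-diagonal comparison within the monochromatic set, and the bounds above give a uniform $(k,\delta)$-ladder.

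The main obstacle is exactly this diagonal issue, together with choosing $r$ uniformly. The bin colouring converts the pairwise gap $\delta+\e$ into a global threshold, at the cost of $\e$, which is why the hypothesis uses $\delta+\e$. The interleaving trick resolves the diagonal.
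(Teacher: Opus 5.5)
Your proof is correct and follows the same route the paper indicates (the paper does not prove this itself; it cites an ``easy pigeonhole argument'' for $(a)$ and a ``standard Ramsey argument'' for $(b)$). For $(a)$ you pigeonhole the thresholds $r_i$ into intervals of length $\epsilon$; for $(b)$ you Ramsey-colour pairs by sign and value bins and handle the diagonal by interleaving $x'_m=x_{2m}$, $y'_n=y_{2n+1}$. A minor simplification: only the bin of the smaller value $f(x_j,y_i)$ is used in setting $r$, so $2p$ colours would suffice.
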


Since Theorems \ref{thm:CPC2} and \ref{thm:CPC2alt} are not quantitative, it  follows that omitting ladders versus agnostic ladders makes no difference to the statement.

The third and final difference between Theorems \ref{thm:CPC2} and \ref{thm:CPC2alt} is that the latter only explicitly says ``partitions" rather than ``$f$-definable partitions" or ``equipartitions". We reconcile this with  the following two remarks. 

\begin{remark}\label{rem:complexity}
The proof of Theorem \ref{thm:CPC2alt} directly yields $f$-definable partitions. This is explained in \cite[Remark 5.3]{CPC}, which  also includes a detailed discussion of bounding the ``Boolean complexity" of the $f$-definable partitions involved. To make this precise, one must formalize a notion of complexity, which we will not do. Instead, we will just point out how one might derive this in our results involving $f$-definable partitions. These arguments all stem from Corollary \ref{cor:treeconstant}, were construct an $f$-definable partition of an initial set $Y$ by iterating Lemma \ref{lem:findgood} to find the next $f$-definable set as a subset of some $f$-definable set $Y'\seq Y$. The set $Y'$ begins as $Y$, which of course has bounded complexity, and from the proof of Lemma \ref{lem:findgood} we see that the new set is obtained by adding at most $t$ more inequalities in terms of $f$. Thus the complexity of all of the sets in the partition remains bounded in terms of the initial parameters $t$ and $\epsilon$.  
\end{remark}

\begin{remark}\label{rem:strong}
In \cite[Section 6]{CPC}, the authors obtain a version of Theorem \ref{thm:CPC2alt} with equipartitions using methods significantly different from our random sampling process. In particular, the work in \cite{CPC} actually focuses on an extra strong kind of regularity  in which certain  parameters can be made small as a function of the number of parts. In this regime, partitions can be made equitable using only routine methods.  An account of this approach in the graphs case is given in \cite{CTSRE}. A quantitative version of this extra strong form of regularity for stable graphs was first proved by Terry and Wolf (see Theorem 5.15 and Lemma 5.9 in \cite{TW}). A similar analysis for stable functions is certainly possible, based on a merging of the methods in \cite{TW} with the tools developed above.  However, the bounds in \cite[Theorem 5.15]{TW} are Wowzer-type, and thus this strategy would not yield the polynomial bounds we obtain  here without first obtaining a corresponding improvement in the graphs case. Altogether, a better understanding of optimal bounds for these extra strong stable regularity lemmas remains a separate open question.\footnote{For context, we note that the phrase ``strong regularity" is typically used in the context of a closely related but weaker notion. A strong regularity lemma for arbitrary graphs was first proved by Alon, Fischer, Krivelevich, and Szegedy \cite{AFKS} with Wowzer-type bounds (later shown to be necessary in \cite{KalySh}).}
\end{remark}

Finally, we state the precise quantitative bounds for Theorems \ref{thm:CPC2} and \ref{thm:CPC2alt} that one can obtain from our main results. We begin with  Theorem \ref{thm:CPC2} and the corresponding variation for equipartitions.

\begin{theorem}\label{thm:CPC2qu}
Fix $k\geq 1$ and $\delta,\gamma,\e>0$ with $\e<1$. Suppose $f\colon X\times Y\to [0,1]$ omits $(k,\delta)$-ladders.
\begin{enumerate}[$(1)$]
\item There are $s_1\leq (9/\e)^{4^{\lceil 2\gamma\inv\rceil k}}$, $s_2\leq (9/\e)^{4^k}$, and $f$-definable partitions $X=X_1\cup\ldots\cup X_{s_1}$ and $Y=Y_1\cup\ldots\cup Y_{s_2}$ such that for all $(i,j)\in [s_1]\times [s_2]$, $(X_i,Y_j)$ is $\e$-almost $(4\delta+\gamma)$-constant with respect to $f$.
\item If $X$ and $Y$ are sufficiently large, then there are $\ell_1\leq 62(17/\e)^{4^{\lceil 2\gamma\inv\rceil k}}$, $\ell_2\leq 62(17/\e)^{4^k}$, and equipartitions $X=X_1\cup\ldots\cup X_{\ell_1}$ and $Y=Y_1\cup\ldots\cup Y_{\ell_2}$ such that for all $(i,j)\in [\ell_1]\times [\ell_2]$, $(X_i,Y_j)$ is $\e$-almost $(4\delta+\gamma)$-constant with respect to $f$.
\end{enumerate}
\end{theorem}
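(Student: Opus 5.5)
The plan is to reduce Theorem \ref{thm:CPC2qu} to Theorems \ref{thm:1c} and \ref{thm:1ceq}. The one missing input is a ladder bound for $f^{opp}$, and Proposition \ref{prop:agnosticladders}$(a)$ supplies it. The parameters are set as $\delta'=\delta+\gamma/2$ and $k'=\lceil 2\gamma\inv\rceil k$. With these choices $2(\delta+\delta')=4\delta+\gamma$, and the exponents $4^{k'}$ and $4^{k}$ are exactly the ones in the statement.

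The first step is to show that $f^{opp}$ omits $(k',\delta+\gamma/2)$-ladders. Suppose instead that $f^{opp}$ admits a $(\lceil 2\gamma\inv\rceil k,\delta+\gamma/2)$-ladder. Since $\lceil 2\gamma\inv\rceil k\geq \lceil(\gamma/2)\inv\rceil(k-1)+1$, truncating gives a $(\lceil(\gamma/2)\inv\rceil(k-1)+1,\delta+\gamma/2)$-ladder for $f^{opp}$. Proposition \ref{prop:agnosticladders}$(a)$, applied with $\e=\gamma/2$, then turns this into a uniform $(k,\delta)$-ladder for $f^{opp}$.

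Uniform ladders are symmetric under passing to the opposite function, as noted before Proposition \ref{prop:agnosticladders}. Concretely, I would take the ladder $(y_i),(x_i)$ with common threshold $r$ for $f^{opp}$, reverse the indices, and swap the roles of the two sides. Checking the inequalities should then produce a uniform $(k,\delta)$-ladder for $f$, and in particular a $(k,\delta)$-ladder for $f$. This contradicts the hypothesis on $f$.

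With the ladder bound in hand, part $(1)$ follows by applying Theorem \ref{thm:1c} with $k$, $k'$, $\delta$, and $\delta'=\delta+\gamma/2$. This gives $s_1\leq (9/\e)^{4^{k'}}$ and $s_2\leq(9/\e)^{4^k}$, with all pairs $\e$-almost $2(2\delta+\gamma/2)=(4\delta+\gamma)$-constant. Part $(2)$ follows the same way from Theorem \ref{thm:1ceq}. That theorem gives strict inequalities $\ell_1<62(17/\e)^{4^{k'}}$ and $\ell_2<62(17/\e)^{4^k}$, which are stronger than the bounds stated.

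The main obstacle is the index bookkeeping in the reversal step for uniform ladders, since ladders are not symmetric in general. For a uniform ladder with threshold $r$, the defining conditions are $f(x_i,y_j)\geq r+\delta$ for $i\leq j$ and $f(x_i,y_j)\leq r$ for $i>j$. Under the reversal $x'_i=y_{k+1-i}$ and $y'_j=x_{k+1-j}$, I expect these conditions to transform into a ladder of the opposite orientation, with the "$\leq r$" and "$\geq r+\delta$" cases exchanged. If that happens, I would correct it by replacing $f$ with $1-f$ and adjusting the thresholds. Alternatively, I would cite the stated symmetry of uniform ladders directly.
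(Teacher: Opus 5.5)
Your proposal is correct and matches the paper's proof: Proposition \ref{prop:agnosticladders}$(a)$ with $\e=\gamma/2$, together with the symmetry of uniform ladders, shows that $f^{opp}$ omits $(\lceil 2\gamma\inv\rceil k,\delta+\frac{\gamma}{2})$-ladders, and Theorems \ref{thm:1c} and \ref{thm:1ceq} then apply directly with $2(\delta+\delta')=4\delta+\gamma$. One small correction: reversing a uniform ladder for $f^{opp}$ via $x'_i=x_{k+1-i}$, $y'_j=y_{k+1-j}$ already gives a uniform ladder for $f$ with the same orientation, so no flip occurs --- which matters, because your fallback of passing to $1-f$ would not have produced a ladder for $f$ itself.
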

\begin{proof}
By Proposition \ref{prop:agnosticladders}$(a)$, $f^{opp}$ omits $(\lceil 2\gamma\inv\rceil k,\delta+\frac{\gamma}{2})$-ladders.  Thus parts $(1)$ and $(2)$ follow, respectively, from direct applications of Theorem \ref{thm:1c} and Theorem \ref{thm:1ceq}.
\end{proof}

Next we derive a corresponding quantitative version of Theorem \ref{thm:CPC2alt}. In analogy to Theorem \ref{thm:CPC2qu}, an argument based on Proposition \ref{prop:agnosticladders}$(a)$ and Theorem \ref{thm:1c}/\ref{thm:1ceq} would produce bounds with a similar double-exponential dependence on the  ``slack parameter" $\gamma$. But we can improve this to a single exponential by instead passing through trees. In particular, we have the following variation of Theorem \ref{thm:hodgesfn} suitable for uniform ladders.

\begin{theorem}[{\cite[Corollary 1.12]{CT-hodges}}]\label{thm:hodgesfnOP}
Given $k\geq 1$ and $\delta,\epsilon>0$, if $f\colon X\times Y\to [0,1]$ admits an $(\lceil \e\inv\rceil4^k,2\delta+\e)$-tree, then $f$ admits a uniform  $(k,\delta)$-ladder, and thus admits an agnostic $(k,\delta)$-ladder.
\end{theorem}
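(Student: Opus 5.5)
The plan is to reduce to Theorem \ref{thm:hodgesfn} by first making the thresholds of the tree uniform. I will not route through Proposition \ref{prop:agnosticladders}$(a)$: feeding a ladder of length about $2k/\e$ into it would require a tree of height about $4^{2k/\e}$, which is far more than the available $\lceil\e\inv\rceil4^k$. So the slack $\e$ has to be spent on the tree, before any ladder is extracted.

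\emph{Step 1 (pigeonholing the thresholds).} Let $N=\lceil\e\inv\rceil$ and partition $[0,1]$ into $N$ intervals $I_1,\ldots,I_N$, each of length at most $\e$. Colour each internal node $\sigma$ of the given $(N4^k,2\delta+\e)$-tree by the index $c$ with $r_\sigma\in I_c$. I will use the standard tree-Ramsey lemma: if the internal nodes of a binary tree of height $\sum_{c\le N}(h_c-1)+1$ are $N$-coloured, then for some $c$ there is an embedded binary subtree of height $h_c$ all of whose internal nodes have colour $c$.

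\begin{itemize}
\item The proof of this lemma is by induction on $\sum_c h_c$. Look at the colour $c_0$ of the root. If both of its subtrees contain a colour-$c_0$ embedded subtree of height $h_{c_0}-1$, attach them to the root. Otherwise one of them satisfies the hypothesis with $h_{c_0}$ decreased by one.
\item An embedded subtree keeps only the chosen nodes $x_\sigma$ and threshold values, and assigns to each of its leaves any $y_\eta$ from the original leaves below that leaf.
\item The tree inequalities are preserved by this operation, because $\sigma\conc i\iseq\eta$ is inherited along embeddings.
\end{itemize}

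Taking all $h_c=4^k$ (or one less, adjusting rounding), we get a monochromatic embedded subtree of height at least $\binom{2k}{k}-1$, using $4^k\ge 2\binom{2k}{k}$.

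\emph{Step 2 (uniformizing).} Suppose all thresholds in the subtree lie in $I_c=[a,a+\e)$, and set $r=a+\e$. At each node, $f(x_\sigma,y_\eta)\le r_\sigma\le r$ on the $0$-side. On the $1$-side, $f(x_\sigma,y_{\eta'})\ge r_\sigma+2\delta+\e\ge r+2\delta$. So we obtain a $(\binom{2k}{k}-1,2\delta)$-tree all of whose thresholds equal the single value $r$.

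\emph{Step 3 (tree to uniform ladder).} Apply Theorem \ref{thm:hodgesfn} to this tree to get a $(k,\delta)$-ladder. The point to verify, which I expect to be the main obstacle, is that the ladder can be taken \emph{uniform} when the tree is uniform.

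\begin{itemize}
\item My first attempt: with a uniform threshold, the tree lives entirely in the partial bipartite relation $P_0=\{f\le r\}$, $P_1=\{f\ge r+2\delta\}$.
\item Hodges' discrete tree-to-ladder argument only ever uses the instances of $P_0$ and $P_1$ that it is given, so running it on this partial relation yields a ladder with $f(x_i,y_j)\le r$ for $i>j$ and $f(x_i,y_j)\ge r+2\delta$ for $i\le j$.
\item That is a uniform ladder, even with gap $2\delta$.
\end{itemize}

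If instead one only trusts the black-box statement of Theorem \ref{thm:hodgesfn}, I would check in \cite{CT-hodges} that the ladder values $r_i$ it produces are drawn from the tree's thresholds, possibly shifted by $\delta$, in a way that makes them all equal when the tree is uniform. Finally, a uniform ladder is an agnostic ladder, which gives the last clause of the statement.
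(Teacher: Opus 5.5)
The paper does not prove this statement; it is imported from \cite{CT-hodges}, so your argument has to stand on its own. Steps 1 and 2 are sound. The tree-Ramsey bound $N(h-1)+1$ is correct, embedded subtrees preserve the tree inequalities, and a monochromatic subtree gives a $(4^k,2\delta)$-tree with a single threshold $r$. The gap is in Step 3, which carries all of the content.

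\textbf{Your first justification for Step 3 is false.} A tree-to-ladder argument cannot restrict itself to the pairs $(x_\sigma,y_\eta)$ with $\sigma\conc i\iseq\eta$ that the tree controls. Hodges' argument has to sort the off-tree pairs (those with $\sigma\not\iseq\eta$) using the dichotomy $E$ versus $\neg E$. For those pairs, $f$ can lie strictly between $r$ and $r+2\delta$.

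Here is a concrete counterexample. Take $X=\{x_\sigma:\sigma\in\bn{h}\}$ and $Y=\{y_\eta:\eta\in\bl{h}\}$. Let $f(x_\sigma,y_\eta)$ be $0$ if $\sigma\conc 0\iseq\eta$, $1$ if $\sigma\conc 1\iseq\eta$, and $\frac12$ otherwise.
\begin{enumerate}
\item For every $h$ this is an $(h,1)$-tree with all thresholds $0$. So here $2\delta=1$, and your claim would give a uniform $(k,1)$-ladder for every $k$.
\item Yet $f$ has no $(3,1)$-ladder at all. In any $(k,1)$-ladder every $r_i=0$ and every entry is $0$ or $1$.
\item Hence each $\sigma_i$ is an initial segment of each $\eta_j$, so the (distinct) $\sigma_i$ form a chain.
\item For every $\sigma_i$ other than the longest, $f(x_{\sigma_i},y_{\eta_j})$ equals the bit of the longest one at position $|\sigma_i|$, independent of $j$.
\item But rows $2,\ldots,k$ of a ladder are not constant, so $k\leq 2$.
\end{enumerate}
So the gap genuinely has to be halved, even for uniform trees.

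\textbf{Your fallback is the missing idea, not a routine check.} Theorem \ref{thm:hodgesfn} says nothing about the $r_i$. The natural way to sort off-tree entries is against the midpoint $r+\delta$. A row is then witnessed with threshold $r$ if its off-tree entries lie above the midpoint, and with threshold $r+\delta$ if they lie below. So a priori a uniform tree yields a ladder with thresholds in $\{r,r+\delta\}$.

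\begin{itemize}
\item Such a mixed ladder cannot be made uniform with gap $\delta$. A common threshold $r'$ would need both $r'\geq r+\delta$ and $r'+\delta\leq r+\delta$.
\item Keeping only the rows with the majority threshold would require starting from a $(2k-1,\delta)$-ladder. That needs a tree of height $\binom{4k-2}{2k-1}-1$, which exceeds the $4^k$ you have left after Step 1 once $k\geq 2$.
\end{itemize}

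To close the gap, you need a refined tree-to-ladder argument. It must track, row by row, on which side of the midpoint the unresolved entries fall, and output a ladder whose rows are all of the same kind. Neither the black-box statement nor the partial-relation heuristic supplies this.
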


Using this, we  deduce the following quantitative version of Theorem \ref{thm:CPC2alt} (phrased with almost constant pairs) and the  corresponding variation for equipartitions.

\begin{theorem}\label{thm:CPCQ1}
Fix $k\geq 1$ and $\delta,\gamma,\epsilon>0$ with $\e<1$.  Suppose $f\colon X\times Y\rightarrow [0,1]$ omits agnostic $(k,\delta)$-ladders.
\begin{enumerate}[$(1)$]
\item There are $s_1,s_2\leq (9/\e)^{2\lceil 2 \gamma\inv\rceil 4^k}$ and $f$-definable  partitions $X=X_1\cup \ldots \cup X_{s_1}$ and $Y=Y_1\cup \ldots \cup Y_{s_2}$ such that for all $(i,j)\in[s_1]\times[s_2]$, $(X_i,Y_j)$ is $\epsilon$-almost $(4\delta+\gamma)$-constant with respect to $f$.
\item If $X$ and $Y$ are sufficiently large, then there are $\ell_1,\ell_2\leq 62(17/\e)^{\lceil 2\gamma\inv\rceil 4^k+1}$ and equipartitions $X=X_1\cup \ldots \cup X_{\ell_1}$ and $Y=Y_1\cup \ldots \cup Y_{\ell_2}$ such that for all $(i,j)\in[\ell_1]\times[\ell_2]$, $(X_i,Y_j)$ is $\epsilon$-almost $(4\delta+\gamma)$-constant with respect to $f$.
\end{enumerate}
\end{theorem}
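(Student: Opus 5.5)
The plan is to deduce Theorem \ref{thm:CPCQ1} directly from Theorem \ref{thm:maintree}, using Theorem \ref{thm:hodgesfnOP} to turn the agnostic-ladder hypothesis into tree bounds for both $f$ and $f^{opp}$. Set $\delta_0=2\delta+\frac{\gamma}{2}$ and $t=\lceil 2\gamma\inv\rceil 4^k$. We apply Theorem \ref{thm:hodgesfnOP} with slack parameter $\frac{\gamma}{2}$, so that $\lceil (\gamma/2)\inv\rceil 4^k=t$. Suppose $f$ admitted a $(t,\delta_0)$-tree. Then $f$ would admit an agnostic $(k,\delta)$-ladder, which is a contradiction. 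Hence $f$ omits $(t,\delta_0)$-trees.

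For the dual function, Remark \ref{rem:agnosticsym} says that $f^{opp}$ also omits agnostic $(k,\delta)$-ladders: an agnostic ladder for $f^{opp}$ is, after swapping the roles of the sequences, an agnostic ladder for $f$. Applying Theorem \ref{thm:hodgesfnOP} to $f^{opp}$ in the same way shows that $f^{opp}$ omits $(t,\delta_0)$-trees.

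Now apply Theorem \ref{thm:maintree} with $t=t'$ and $\delta=\delta'=\delta_0$. Part $(a)$ gives $f$-definable partitions with $s_1,s_2\leq(9/\e)^{2t}$ such that every pair is $\e$-almost $(\delta_0+\delta_0)$-constant. Since $2\delta_0=4\delta+\gamma$, this is statement $(1)$. Part $(b)$, for $X$ and $Y$ sufficiently large, gives equipartitions with $\ell_1,\ell_2<62(17/\e)^{t+1}$ and the same constancy parameter $4\delta+\gamma$, which is statement $(2)$.

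None of these steps is a real obstacle; the only care needed is bookkeeping. First, the choice $\e=\gamma/2$ in Theorem \ref{thm:hodgesfnOP} must produce exactly the ceiling $\lceil 2\gamma\inv\rceil$ appearing in the stated bounds. Second, the value $\delta_0=2\delta+\gamma/2$ must double to exactly $4\delta+\gamma$. Third, the symmetry of agnostic ladders must be checked carefully so that the same tree bound $t$ is available on both sides.
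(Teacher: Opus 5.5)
Your proposal is correct and follows the paper's proof: apply Theorem \ref{thm:hodgesfnOP} with slack $\gamma/2$ to get that $f$ omits $(\lceil 2\gamma\inv\rceil 4^k,2\delta+\frac{\gamma}{2})$-trees, use Remark \ref{rem:agnosticsym} to get the same for $f^{opp}$, and then apply Theorem \ref{thm:maintree} with $t=t'$ and $\delta=\delta'$. Your bookkeeping of the ceiling, the doubled constant $4\delta+\gamma$, and the bounds $(9/\e)^{2t}$ and $62(17/\e)^{t+1}$ is accurate.
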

\begin{proof}
By Theorem \ref{thm:hodgesfnOP}, $f$ omits  $(\lceil 2\gamma\inv\rceil4^k,2\delta+\frac{\gamma}{2})$-trees. By Remark \ref{rem:agnosticsym}, the same is true of $f^{opp}$.  In light of this, the result follows directly from Theorem \ref{thm:maintree}.
\end{proof}

\section{Equipartitioning good sets with covering numbers}\label{sec:appendixB}

Given a function $f\colon X\times Y\to [0,1]$, Proposition \ref{prop:equisym} shows that a partition of $X$ into good sets for $f$ can be nontrivially converted into an equipartition of good sets, provided $|Y|$ is exponentially bounded by $|X|$. The purpose of this section is to prove a similar result where, rather than restricting $|Y|$, we instead bound certain ``covering numbers" for the fiber family $\cF_f$. First, however, we observe that one cannot prove a  result of this kind without any additional assumptions on $f$. In fact, we establish this even in the discrete case. Given a relation $E\seq X\times Y$ and a set $A\seq X$, we say that $A$ is \emph{$\e$-good} with respect to $E$ if for any $y\in Y$, either $|E(A,y)|<\e|A|$ or $|E(A,y)|>(1-\e)|A|$. Note that this is equivalent to saying that $A$ is $(\delta,\e)$-good with respect to $\boldsymbol{1}_E$, where $0<\delta\leq 1$ is arbitrary.

   \begin{proposition}\label{prop:goodcounter}
   Fix an integer $L\geq 1$ and $0<\e<1$. Let $X$ be a set, with $|X|\geq \Omega_{\e,L}(1)$. Then there is a set $Y$ and a relation  $E\seq X\times Y$ satisfying the following properties.
   \begin{enumerate}[$(1)$]
   \item There is a partition of $X$ into pieces of size at least $\frac{\e}{3}|X|$ such that each piece is  $\e$-good with respect to $E$. 
   \item In any equipartition of $X$ into at most $L$ sets, at least one of the pieces is not $\frac{1}{4}$-good with respect to $E$.
   \end{enumerate}
   \end{proposition}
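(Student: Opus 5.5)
The plan is an explicit construction. We fix a partition $X=P_1\cup\ldots\cup P_r$ with $r\le 3/\e$ and every $|P_j|\ge\frac{\e}{3}|X|$. We then take $Y$ to be the family of \emph{all} sets $S\seq X$ that are compatible with this partition, meaning that for each $j$, either $|S\cap P_j|<\e|P_j|$ or $|S\cap P_j|>(1-\e)|P_j|$. We put $E=\{(x,S):x\in S\}$. Property $(1)$ then holds by definition, since every $P_j$ is $\e$-good for every $S\in Y$. All the work is in property $(2)$. There we must show that for every $m\le L$ and every equipartition $X=X_1\cup\ldots\cup X_m$ there are some $i$ and some compatible $S$ with $\frac14|X_i|\le|S\cap X_i|\le\frac34|X_i|$.

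The sizes $|P_j|$ must be chosen so that no equipartition into at most $L$ pieces can be ``aligned'' with the $P_j$. The trivial partition $r=1$ fails for $m=1$, so some rigidity in the sizes is genuinely needed. I would choose the proportions $\theta_j=|P_j|/|X|$ (all at least $\e/3$) so that no union of pieces has proportion within a fixed margin $\eta=\eta(\e,L)$ of a multiple of $1/m$, for any $m\le L$. Since $|X|\ge\Omega_{\e,L}(1)$, this can be realised with integer sizes.

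With the sizes fixed, I would produce a witness $S$ for a given equipartition by a \emph{continuity} argument. Fix a part $X_i$. Let $S$ range over sets built by taking whole pieces $P_j$, together with slices of the remaining pieces of size just under $\e|P_j|$, all chosen inside $X_i$ as far as possible. Adding elements one at a time changes $|S\cap X_i|/|X_i|$ by at most $1/|X_i|\le L/|X|$, which is tiny. So if the achievable values of this fraction run from below $\frac14$ to above $\frac34$ in small steps, some intermediate compatible $S$ lands in $[\frac14,\frac34]$. The slices give free fine adjustment inside each piece. The only obstruction is a jump caused by switching a whole piece from ``sparse'' to ``full''.

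The main obstacle is a part $X_i$ whose intersection with some $P_j$ lies strictly between $\e|P_j|$ and $(1-\e)|P_j|$, with a large fraction of $X_i$ inside $P_j$. Then switching $P_j$ on is a big jump, and $X_i$ might be, say, almost entirely sparse-or-full relative to the pieces in a way that skips $[\frac14,\frac34]$. I would handle this by a counting argument over all parts simultaneously. If every part $X_i$ fails, then each $X_i$ must be at least $\frac34$-concentrated or at most $\frac14$-concentrated in each union $U$ of pieces reachable by the continuity argument. Summing over $i$ would then express the proportion $|U|/|X|$ as $k/m$ up to an error controlled by the fine slices. I would first try to make this error smaller than $\eta$ by exploiting the slice adjustments, and thereby contradict the choice of the $\theta_j$. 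Pinning down this bookkeeping, so that the error really is below $\eta$ rather than of order $m/4$, is the step I expect to require the most care.
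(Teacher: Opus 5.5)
The partition you describe cannot be relied on, because the arithmetic condition you choose the sizes by is the wrong invariant and does not imply (2).

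What works: your maximal choice of $Y$ (all sets compatible with the pieces) is harmless, since it contains every witness the paper uses, so (1) is immediate. Your whole-piece witnesses also suffice for any part $X_i$ in which no piece has more than $\frac34|X_i|$ elements: add pieces greedily in increasing order of $|P_j\cap X_i|$. The gap is in the remaining case. You identify it correctly as the main obstacle but propose to resolve it with the arithmetic condition.

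Why the counting step cannot be done: a part $X_i$ with $|X_i\cap P|>\frac34|X_i|$ for some piece $P$ defeats every compatible $S$ as soon as $|X_i\setminus P|+\epsilon|P|<\frac14|X_i|$. So the misalignment $|X_i\setminus P|$ of a failing part can be a constant fraction of $|X_i|$, and the slices do nothing to shrink it. Meanwhile your margin must satisfy $\eta\le\frac{1}{2L}$ just for the avoidance condition to be satisfiable at $m=L$. So the error in your summation cannot be pushed below $\eta$.

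A concrete counterexample: let $\epsilon<\frac15$, $L\ge 2$, and take two pieces with $|P_1|=(\frac12+\pi)|X|$ and $|P_2|=(\frac12-\pi)|X|$, where $0<\pi\le\frac{1}{40L}$. This satisfies all your requirements with $\eta=\pi$. For $m=2$, take $X_1\subseteq P_1$ and $X_2=P_2\cup(P_1\setminus X_1)$. Every compatible $S$ satisfies either $|S\cap X_1|<\epsilon|P_1|<\frac14|X_1|$ or $|S\cap X_1|>|X_1|-\epsilon|P_1|>\frac34|X_1|$. It also satisfies either $|S\cap X_2|<\epsilon|P_2|+\pi|X|<\frac14|X_2|$ or $|S\cap X_2|>(1-\epsilon)|P_2|>\frac34|X_2|$. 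Thus both parts are $\frac14$-good, and (2) fails for a partition your recipe permits.

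What is missing is a scale separation between pieces and parts. The paper takes $s=\lfloor 2/\epsilon\rfloor$ pieces of size $\lceil\frac{\epsilon}{3}|X|\rceil$ together with the remainder $A_{s+1}$, which has size $>\frac14|X|$. Its $Y$ consists only of unions of pieces and subsets of $A_{s+1}$ of size $<\epsilon|A_{s+1}|$, all of which lie in your $Y$. The argument runs as follows:
\begin{enumerate}[$(a)$]
\item Suppose every part $X_j$ satisfies $|X_j\cap A_{\sigma(j)}|>\frac34|X_j|$ for some $\sigma(j)$. Then each piece not of the form $A_{\sigma(j)}$ lies inside $Z=\bigcup_j(X_j\setminus A_{\sigma(j)})$, which has size $<\frac14|X|$.
\item Hence fewer than $\frac34\epsilon^{-1}$ pieces are missed, so there are more than $\frac54\epsilon^{-1}$ parts, each of size at most $\frac45\epsilon|X|+1$. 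Also $A_{s+1}$ cannot be missed, since it is larger than $Z$.
\item For a part $X_j$ concentrated in $A_{s+1}$, a subset of $X_j\cap A_{s+1}$ of size $\lceil\frac14|X_j|\rceil<\epsilon|A_{s+1}|$ is an admissible witness showing $X_j$ is not $\frac14$-good.
\end{enumerate}
Many small pieces force many small parts, and one piece is large compared with every part. That is exactly what makes your slice witnesses available where they are needed; no arithmetic condition on the proportions is required.
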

    \begin{proof}
    Set $s=\lfloor 2\e\inv\rfloor$ and let $A_1,\ldots,A_s$ be pairwise disjoint subsets of $X$ each of size $\lceil \frac{\e}{3} |X|\rceil$. Let $A_{s+1}=X\backslash (A_1\cup\ldots\cup A_s)$. Note that $|A_1\cup\ldots\cup A_s|=
    s\lceil\frac{\e}{3} |X|\rceil\leq \frac{2}{3}|X|+2\e\inv$. Since $X$ is sufficiently large, this implies both $|A_{s+1}|\geq\frac{\e}{3}|X|$ and  $|A_{s+1}|> \frac{1}{4}|X|$. 
    
    Let $Y_1$ be the set of subsets of $X$ obtained as a union over some sub-collection of $A_1,\ldots,A_{s+1}$. Let $Y_2$ be the set of subsets $S\seq A_{s+1}$ with $|S|<\e|A_{s+1}|$. Set $Y=Y_1\cup Y_2$ and let $E\seq X\times Y$ be the membership relation, i.e., $E(x,S)$ holds if and only if $x\in S$. 
    
    We claim that each $A_i$ is $\e$-good with respect to $E$, which establishes $(1)$. Indeed,  fix $i\in[s+1]$ and $S\in Y$. We need to show  $|E(A_i,S)|<\e|A_{i}|$ or $|E(A_i,S)|>(1-\e)|A_{i}|$. Note that if $S\in Y_1$ then either $E(A_i,S)=A_i$ or $E(A_i,S)=\emptyset$. Otherwise, if $S\in Y_2$ then either $E(A_i,S)=\emptyset$, or $i=s+1$ and $E(A_{s+1},S)=S$, hence $|E(A_{s+1},S)|=|S|<\e|A_{s+1}|$. In each case, we obtain the desired conclusion.

    It remains to establish (2). Fix an equipartition $X=X_1\cup\ldots\cup X_\ell$ with $\ell\leq L$. \medskip

    \noindent\textit{Case 1.} There is  $j\in[\ell]$ such that  $|X_j\cap A_i|\leq\frac{3}{4}|X_j|$ for all $i\in[s+1]$.

    Let $B_1,\ldots,B_{s+1}$ be an enumeration of $A_1,\ldots,A_{s+1}$ so that $|B_1\cap X_j|\leq\ldots\leq |B_{s+1}\cap X_j|$. Let $S_0=\emptyset$ and for $i\in [s+1]$, set $S_i=B_1\cup\ldots\cup B_i$. Choose the smallest $i\in[s+1]$ satisfying $|X_j\cap S_i|\geq \frac{1}{4}|X_j|$ (note that $i$ exists since $S_{s+1}=X$). Then
    \begin{equation}\label{eq:XjSi}
    |X_j\cap S_i|=|X_j\cap S_{i-1}|+|X_j\cap B_i|<{\textstyle\frac{1}{4}}|X_j|+|X_j\cap B_i|.
    \end{equation}
    Since $|X_j\cap B_i|\leq \frac{3}{4}|X_j|$ by assumption,  (\ref{eq:XjSi}) implies $|X_j\cap S_i|<|X_j|$. Thus $i\leq s$, and hence $X_j\cap B_i$ and $X_j\cap B_{s+1}$ are disjoint subsets of $X_j$, with $|X_j\cap B_i|\leq |X_j\cap B_{s+1}|$. Therefore $|X_j\cap B_i|\leq\frac{1}{2}|X_j|$, and so (\ref{eq:XjSi}) implies $|X_j\cap S_i|<\frac{3}{4}|X_j|$. Altogether, we have
    \[
    {\textstyle\frac{1}{4}}|X_j|\leq |X_j\cap S_i|<{\textstyle\frac{3}{4}}|X_j|.
    \]
    Since $S_i\in Y_1$ and $E(X_j,S_i)=X_j\cap S_i$, it follows that $X_j$ is not $\frac{1}{4}$-good with respect to $E$.\medskip

    \noindent\textit{Case 2.} For all $j\in[\ell]$ there is some $\sigma(j)\in[s+1]$ such that $|X_j\cap A_{\sigma(j)}|>\frac{3}{4}|X_j|$. 

       Set $Z=\bigcup_{j=1}^\ell X_j\backslash A_{\sigma(j)}$ and note that $|Z|<\frac{1}{4}|X|$ by assumption of this case. Now set $I=\{\sigma(j):j\in[\ell]\}$ and set $I^*=[s+1]\backslash I$. Note that if $i\in I^*$ then, for all $j\in[\ell]$, $X_j\cap A_i\seq X_j\backslash A_{\sigma(j)}$. This implies $\bigcup_{i\in I^*}A_i\seq Z$. Since $|A_i|\geq\frac{\e}{3}|X|$ for all $i\in[s+1]$, we conclude
       \[
       \textstyle\frac{\e}{3}|X||I^*|\leq |\bigcup_{i\in I^*}A_i|\leq |Z|<\frac{1}{4}|X|,
       \]
       and hence $|I^*|<\frac{3}{4}\e\inv$. Therefore $\ell\geq |I|> s+1-\frac{3}{4}\e\inv>2\e\inv-\frac{3}{4}\e\inv=\frac{5}{4}\e\inv$. It follows that for all $j\in [\ell]$, we have 
    \begin{equation}\label{eq:Xjbound}
    |X_j|\leq \textstyle\frac{4}{5}\e|X|+1.
    \end{equation}

    Next, since $\bigcup_{i\in I^*}A_i\seq Z$ and $|Z|<\frac{1}{4}|X|$, it follows that $s+1\in I$ (recall $|A_{s+1}|>\frac{1}{4}|X|$). So we may fix  $j\in[\ell]$ such that $\sigma(j)=s+1$, i.e., $|X_j\cap A_{s+1}|>\frac{3}{4}|X_j|$. Set $r\coloneqq\lceil\frac{1}{4}|X_j|\rceil$. Since $X$ is sufficiently large, we have $r\leq \frac{3}{4}|X_j|$.   Since $|X_j\cap A_{s+1}|>\frac{3}{4}|X_j|$, we may fix some $S\seq X_j\cap A_{s+1}$ of size $r$. Then by (\ref{eq:Xjbound}) and since $X$ is sufficiently large,
    \[
    \textstyle r<\frac{1}{4}|X_j|+1\leq \frac{1}{5}\e|X|+\frac{5}{4}\leq \frac{1}{4}\e|X|< \e|A_{s+1}|,
    \]
    which implies $S\in Y_2$. Moreover, $E(X_j,S)=X_j\cap S=S$, and hence $|E(X_j,S)|=r$. Since $\frac{1}{4}|X_j|\leq r\leq \frac{3}{4}|X_j|$, it follows that $X_j$ is not $\frac{1}{4}$-good with respect to $E$. 
   \end{proof}

Returning to the main goal of this section, we now define covering numbers for function classes (following notation in \cite{ABCH}).

\begin{definition}\label{def:cover}
Let $X$ be a set.
\begin{enumerate}[$(1)$]
\item Given $n\geq 1$ and $\xbar\in X^n$, define the pseudometric $\ell^\infty_{\xbar}$ on $[0,1]^X$ by
\[
\ell^\infty_{\xbar}(f,g)=\max_{1\leq i\leq n}|f(x_i)-g(x_i)|.
\]
\item Given $\e>0$, $\cF\seq [0,1]^X$, $n\geq 1$, and $\xbar\in X^n$, an \emph{$\e$-cover of $\cF$ with respect to $\ell^\infty_{\xbar}$} is a subset $\cF_0\seq\cF$ such that for all $f\in\cF$ there is $f'\in\cF_0$ with $\ell^\infty_{\xbar}(f,f')<\e$. 
\item Given $\e>0$, $\cF\seq [0,1]^X$, $n\geq 1$, and $\xbar\in X^n$, let $N(\epsilon,\mathcal{F},\xbar)$  be the minimal cardinality of an $\e$-cover of $\cF$ with respect to $\ell^\infty_{\xbar}$. 
 \end{enumerate}
\end{definition}

Given a set $X$ and a nonempty subset $A\seq X$, we will use $\ell^\infty_A$ and $N(\e,\cF,A)$ to denote $\ell^\infty_{\bar{a}}$ and  $N(\e,\cF,\bar{a})$ (respectively) where $\bar{a}$ enumerates $A$. This notation is well-defined because these pseudometrics do not depend on the particular enumeration $\bar{a}$ of $A$.

Next we adapt the definition of good sets to arbitrary families of functions.

\begin{definition}
Given $\delta,\epsilon> 0$ and $\cF\seq [0,1]^X$, we say that a nonempty set $A\seq X$ is \textbf{$(\delta,\epsilon)$-good with respect to $\cF$} if for all $f\in\cF$,  $f|_A$ is $\e$-almost $\delta$-constant.
\end{definition}

Note that if $f\colon X\times Y\to [0,1]$ is a function, then a subset $A\seq X$ is $(\delta,\e)$-good with respect to $f$ (in the sense of Definition \ref{def:fullgood}) if and only if it is $(\delta,\epsilon)$-good with respect to the fiber class $\cF_f$ (recall item \ref{not:fibers} in Section \ref{notation}). 

\begin{proposition}\label{prop:equi-cover}
Fix $s,k\geq 1$ and $\delta,\e,\eta,\lambda,\nu>0$, with $\nu\leq\frac{1}{2}$. Let $X$ be a set, with $|X|\geq\Omega_{k,\lambda,\nu}(1)$, and fix $\cF\seq [0,1]^X$.  Suppose there is a partition 
\[
X=A_0\cup A_1\cup\ldots\cup A_s
\]
such that $|A_0|<\nu|X|$ and, for all $i\in[s]$, $|A_i|\geq\lambda|X|$,   $A_i$ is $(\delta,\e)$-good with respect to $\cF$, and $N(\eta,\cF,A_i)\leq k$. 

Set $\tilde{\e}=(1+2\nu)\e+4\nu$. Then there is an integer $\ell<2(\lambda\nu)\inv$ and an equipartition 
\[
X=X_1\cup\ldots\cup X_\ell
\]
such that for all $i\in[\ell]$, $X_i$ is $(\delta+2\eta,\tilde{\e})$-good with respect to $\cF$.
\end{proposition}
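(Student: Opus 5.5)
The plan is to mimic the proof of Proposition \ref{prop:equisym}, but to replace the potentially huge family of indicator functions $\{\boldsymbol{1}_{E_{i,y}}:y\in Y\}$ by a family of size at most $k$ obtained from an $\eta$-cover. Fix $i\in[s]$. Since $N(\eta,\cF,A_i)\leq k$, there is $\cF_i^0\seq\cF$ with $|\cF_i^0|\leq k$ such that every $f\in\cF$ is within $\eta$ of some $f'\in\cF_i^0$ in $\ell^\infty_{A_i}$. For each $g\in\cF_i^0$, goodness of $A_i$ gives $E_{i,g}\seq A_i$ with $|E_{i,g}|<\e|A_i|$ such that $g$ is $\delta$-constant on $A_i\setminus E_{i,g}$. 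Set $\cF_i=\{\boldsymbol{1}_{E_{i,g}}:g\in\cF_i^0\}$. Then $|\cF_i|\leq k$, and $A_i$ is $\e$-small for $\cF_i$.

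Next, apply Lemma \ref{lem:equi2} to the partition $X=A_0\cup A_1\cup\ldots\cup A_s$ with $q=k$, $\rho=\nu$, and the given $\lambda,\nu$. Its size hypotheses hold because $|X|\geq\Omega_{k,\lambda,\nu}(1)$. This yields $\ell<2(\lambda\nu)\inv$ and a partition $X=X'_0\cup X'_1\cup\ldots\cup X'_\ell$ with two properties. First, $|X'_0|<2\nu|X|$ and all $X'_i$ with $i\geq 1$ have equal size. Second, for each $i\in[\ell]$ there is $\sigma(i)\in[s]$ with $X'_i\seq A_{\sigma(i)}$ and $X'_i$ being $\e_1$-small for $\cF_{\sigma(i)}$, where $\e_1=(1-\nu)\inv\e+\nu$. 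Now distribute $X'_0$ evenly, as in Remark \ref{rem:distribute}, to obtain an equipartition $X=X_1\cup\ldots\cup X_\ell$. Since $X$ is large, this satisfies $|X_i\setminus X'_i|<2\nu|X_i|+1\leq 3\nu|X_i|$.

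For goodness, fix $i\in[\ell]$ and $f\in\cF$, and choose $g\in\cF^0_{\sigma(i)}$ with $\ell^\infty_{A_{\sigma(i)}}(f,g)<\eta$. Set $E^*=(E_{\sigma(i),g}\cap X'_i)\cup(X_i\setminus X'_i)$. On $X_i\setminus E^*\seq A_{\sigma(i)}\setminus E_{\sigma(i),g}$ the function $g$ is $\delta$-constant. Since $f$ is pointwise within $\eta$ of $g$ there, the triangle inequality shows $f$ is $(\delta+2\eta)$-constant on $X_i\setminus E^*$. Moreover $|E^*|<\e_1|X'_i|+3\nu|X_i|\leq(\e_1+3\nu)|X_i|$. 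Using $\nu\leq\frac12$, we get $\e_1+3\nu\leq(1+2\nu)\e+4\nu=\tilde\e$, so $X_i$ is $(\delta+2\eta,\tilde\e)$-good with respect to $\cF$.

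The main point is that the cover must be taken on each $A_i$ separately, and the approximating function $g$ depends on $f$ only through $A_{\sigma(i)}$. This is fine because $X_i\setminus E^*\seq X'_i\seq A_{\sigma(i)}$. The other point to check is bookkeeping: the exceptional set $A_0$ (of density below $\nu$) and the new leftovers from Lemma \ref{lem:equi2} (another $\nu$) must fit into the $4\nu$ term, with the rounding $+1$ absorbed by the largeness of $X$. If the constant comes out slightly off, I would apply Lemma \ref{lem:equi2} with $\rho=\nu$ and bound $|X_i\setminus X'_i|$ more carefully, rather than change $\tilde\e$.
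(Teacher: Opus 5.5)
Your proposal is correct and follows the paper's proof. You take an $\eta$-cover $\cF_i\seq\cF$ of size at most $k$ on each $A_i$, run the argument of Proposition \ref{prop:equisym} (Lemma \ref{lem:equi2} with $\rho=\nu$, then Remark \ref{rem:distribute}) on the exceptional sets of the cover elements, and transfer $\delta$-constancy to an arbitrary $f\in\cF$ by the triangle inequality on $X_i\setminus E^*\seq A_{\sigma(i)}$. This supplies exactly the details the paper leaves to the reader in its Claim 1, and your bound $\e_1+3\nu=(1-\nu)\inv\e+4\nu\leq\tilde\e$ already comes out right, so the hedge in your last paragraph is unnecessary.
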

\begin{proof}
For each $i\in[s]$,  let $\cF_i\seq \cF$ be an $\eta$-cover of $\cF$ with respect to $\ell^\infty_{A_i}$. By assumption, if $i\in[s]$ then $|\cF_i|\leq k$ and $A_i$ is $(\delta,\e)$-good with respect to $\cF_i$. \medskip

\noindent\textit{Claim 1.} There is an integer $\ell<2(\lambda\nu)\inv$, an equipartition
\[
X=X_1\cup\ldots\cup X_\ell,
\]
and a map $\sigma\colon [\ell]\to [s]$ such that, for all $i\in [\ell]$ and $f\in \cF_{\sigma(i)}$, there is $E_{i,f}\seq X_i$ such that $|E_{i,f}|<\tilde{\e}|X_i|$, $X_i\backslash E_{i,f}\seq A_{\sigma(i)}$, and $f$ is $\delta$-constant on $X_i\backslash E_{i,f}$.

\noindent\textit{Proof.} This follows from an application of Lemma \ref{lem:equi2} and Remark \ref{rem:distribute} nearly identical to the proof of Proposition \ref{prop:equisym}. We leave the details to the reader.\clqed\medskip

Finally, we show that the equipartition in Claim 1 is as desired. Fix $i\in[\ell]$ and $f\in\cF$. We want to show that $f$ is $\tilde{\e}$-almost $(\delta+2\eta)$-constant on $X_i$. By choice of $\cF_{\sigma(i)}$, there is some $f'\in\cF_{\sigma(i)}$ such that $\ell^\infty_{A_{\sigma(i)}}(f,f')<\eta$. By Claim 1, $f'$ is $\delta$-constant on $X_i\backslash E_{i,f'}$, which is contained in $A_{\sigma(i)}$. Since $\ell^\infty_{A_{\sigma(i)}}(f,f')<\eta$, it then follows from the triangle inequality that $f$ is $(\delta+2\eta)$-constant on $X_i\backslash E_{i,f'}$. Since $|E_{i,f'}|<\tilde{\e}|X_i|$ by Claim 1, $f$ is $\tilde{\e}$-almost $(\delta+2\eta)$-constant on $X_i$, as desired.
\end{proof}

Compared to our earlier related results (Theorem \ref{thm:equi} and Proposition \ref{prop:equisym}), the main drawback of Proposition \ref{prop:equi-cover}  is that the goodness conclusion does not yield the same ``constant fluctuation"  $\delta$, but rather the larger value $\delta+2\eta$. This does not cause a problem in the discrete case since the $\delta$ parameter is only sensitive to the difference between $0$ and $1$. However, in the analytic setting,  Proposition \ref{prop:equi-cover} leads to a nontrivial loss, especially in situations where $\eta$ must depend on $\delta$. This is precisely what happens in our regularity results for functions, as we now explain.

The relevance of Proposition \ref{prop:equi-cover} in our setting comes from the fact that stability implies  uniform bounds on covering numbers. In the discrete case, this is known as the Sauer-Shelah lemma, which  holds under the much weaker tameness assumption of bounded VC-dimension. Generalizations  for functions are well-established in the literature on statistical learning theory. We state here a recent result of Aiyer, Mansour, Moran, Shao, and Waknine \cite{AMMSW}, which follows from \cite[Lemma 2]{AMMSW}.\footnote{This deduction requires a triangle inequality adjustment to account for the fact that \cite{AMMSW} uses ``improper" covering numbers (i.e., in Definition \ref{def:cover}(2), $\cF_0$ is not required to be contained in $\cF$).}

\begin{lemma}[\cite{AMMSW}]\label{lem:AMMSW}
Fix $d\geq 1$ and $\delta>0$,  and suppose $\cF\seq [0,1]^X$ has $\delta$-fat-shattering dimension $d$. Then for any $n\geq 2$,  $\xbar\in X^n$, and $0<\gamma<1$, $N(\delta+\gamma,\cF,\xbar)\leq n^{14d\log(4/\gamma)\log n}$.
\end{lemma}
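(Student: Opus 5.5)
The plan is to deduce the statement directly from \cite[Lemma 2]{AMMSW}, which bounds \emph{improper} $\ell^\infty$ covering numbers of a class of bounded fat-shattering dimension. The only extra work is a triangle-inequality argument converting an improper cover into a proper one, together with some bookkeeping on the sample $\xbar$.

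\textbf{Reductions.} First I will reduce to the case where $\xbar$ has pairwise distinct coordinates. The pseudometric $\ell^\infty_{\xbar}$ depends only on the set $\{x_1,\ldots,x_n\}$, so replacing $\xbar$ by an enumeration of this set of size $n'\leq n$ does not change $N(\cdot,\cF,\xbar)$. The right-hand side $n^{14d\log(4/\gamma)\log n}$ is increasing in $n$, so it suffices to prove the bound for $n'$. If $n'=1$, a trivial cover argument (grid points in $[0,1]$) is dominated by the bound, or one can pad the sample by one point. Likewise, only the restrictions $f|_{\{x_1,\ldots,x_n\}}$ matter. Restriction does not increase fat-shattering dimension, so I may regard $\cF$ as a class of functions on an $n$-element set with $\delta$-fat-shattering dimension at most $d$.

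\textbf{Applying \cite{AMMSW}.} Next I will invoke \cite[Lemma 2]{AMMSW} in the form: for such a class and $0<\gamma<1$, there is a set $\mathcal{G}\seq[0,1]^X$, not necessarily contained in $\cF$, with $|\mathcal{G}|\leq n^{14d\log(4/\gamma)\log n}$, such that every $f\in\cF$ satisfies $\ell^\infty_{\xbar}(f,g)<\frac{\delta+\gamma}{2}$ for some $g\in\mathcal{G}$. I need to check carefully that their scale normalization, with fat-shattering at scale $\delta$ in their convention, matches a covering radius of half of $\delta+\gamma$. If their convention differs by a factor (for instance, witnesses separated by $\delta$ versus $\pm\delta$ around a level), I will rescale $\delta$ accordingly. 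This matching of conventions, including strict versus weak inequalities, is the main obstacle; the rest is routine.

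\textbf{Improper to proper.} Discard every $g\in\mathcal{G}$ whose $\frac{\delta+\gamma}{2}$-ball meets no element of $\cF$. For each remaining $g$, choose $f_g\in\cF$ with $\ell^\infty_{\xbar}(f_g,g)<\frac{\delta+\gamma}{2}$, and set $\cF_0=\{f_g\}\seq\cF$, so that $|\cF_0|\leq|\mathcal{G}|$. Given $f\in\cF$, pick $g$ with $\ell^\infty_{\xbar}(f,g)<\frac{\delta+\gamma}{2}$. Then $g$ was not discarded, and the triangle inequality gives
\[
\ell^\infty_{\xbar}(f,f_g)<\delta+\gamma.
\]
Hence $\cF_0$ is a proper $(\delta+\gamma)$-cover in the sense of Definition \ref{def:cover}, and $N(\delta+\gamma,\cF,\xbar)\leq n^{14d\log(4/\gamma)\log n}$.
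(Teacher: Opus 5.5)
This is essentially the paper's argument. The paper gives no further proof beyond citing \cite[Lemma 2]{AMMSW}, which bounds improper covering numbers, and a footnote saying that a triangle-inequality adjustment converts these into the proper covers of Definition~\ref{def:cover}; that is exactly your final step, with improper radius $\frac{\delta+\gamma}{2}$ giving proper radius $\delta+\gamma$. On your convention worry, the paper's footnote comparing with \cite{ABCH} indicates that \cite{AMMSW} uses the same gap-$\delta$ convention for fat-shattering as Example~\ref{ex:AMMSW}, so you should not need to rescale $\delta$.
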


 We refer the reader to \cite{AMMSW} for the definition of fat-shattering dimension. The only point needed for this discussion is that if $f\colon X\times Y\to [0,1]$ omits $(t,\delta)$-trees, then the $\delta$-fat-shattering dimension of the fiber family $\cF_f$ is at most $t-1$. In fact,  omitting $(t,\delta)$-trees is equivalent to a bound of $t-1$ on the  ``sequential" $\delta$-fat-shattering dimension of $\cF_f$ (as defined in \cite{RaSrTe}).  Thus Proposition \ref{prop:equi-cover} and Lemma \ref{lem:AMMSW} together provide an alternate approach toward Theorem \ref{thm:maintree}$(b)$ (regularity with equipartitions for functions omitting trees) which avoids  Theorem \ref{thm:equi} (equipartitions of weakly good sets). However, due to the quantitative drawback in Proposition \ref{prop:equi-cover} discussed above, this would yield pairs that are only $\e$-almost $(3\delta+3\delta'+\gamma)$-constant (rather than $(\delta+\delta')$-constant), where $\gamma$ is an independent ``slack parameter".

 \begin{example}\label{ex:AMMSW}
We describe an example showing that the $\delta+\gamma$ term in Lemma \ref{lem:AMMSW} cannot be improved. Fix $0<\delta<\frac{1}{2}$ and $n\geq 1$. Let $X$ be a set of size $2n$ and set $\calS=\binom{X}{n}$.  Then $|\calS|=\binom{2n}{n}\sim 4^n/\sqrt{\pi n}$, and if $A,B\in\calS$ are distinct then $A\backslash B$ and $B\backslash A$ are both nonempty. Now let $r\colon \calS\to (0,\delta)$ be an arbitrary injective function. Define $\cF=\{f_A:A\in\calS\}\seq [0,1]^X$ where $f_A=r(A)+\delta\boldsymbol{1}_A$. 

We first show that $N(\delta,\cF,X)=|\cF|$. More specifically, we fix distinct $A,B\in\calS$ and show $\ell^\infty_{X}(f_A,f_B)>\delta$. Without loss of generality, assume $r(A)>r(B)$. Choose $x\in A\backslash B$. Then $f_A(x)-f_B(x)=r(A)+\delta-r(B)>\delta$, as desired. 

Next we show that $\cF$ has $\delta$-fat-shattering dimension (at most) $1$. Otherwise, by definition, there are $x_1,x_2\in X$, $s_1,s_2\in [0,1]$, and $A_\sigma\in\calS$ for $\sigma\seq[2]$ such that, given $i\in[2]$ and $\sigma\seq[2]$, if $i\in\sigma$ then $f_{A_\sigma}(x_i)\geq s_i+\delta$, and if $i\not\in\sigma$ then $f_{A_\sigma}(x_i)\leq s_i$. For $i\in[2]$, set $f_i=f_{A_{\{i\}}}$ and $r_i=r(A_{\{i\}})$. Then $r_2\leq f_2(x_1)\leq s_1$ and, similarly, $r_1\leq s_2$. Also $r_1+\delta\geq f_1(x_1)\geq s_1+\delta$, and hence $s_1\leq r_1$. Similarly, $s_2\leq r_2$. Altogether $r_1\leq s_2\leq r_2\leq s_1\leq r_1$, and thus $r_1=r_2$.  But this implies $A_{\{1\}}=A_{\{2\}}$, which is impossible since $f_1(x_1)\geq s_1+\delta$ while $f_2(x_1)\leq s_1$. 
 \end{example}

\begin{remark}
 Lemma \ref{lem:AMMSW} is  related to  earlier work of Alon, Ben-David, Cesa-Bianchi, \& Haussler \cite{ABCH}, who show that $N(2\delta,\cF,\xbar)\leq (n/\delta)^{O(d\log(n/d\delta))}$ when $\cF$ has $\delta$-fat-shattering dimension at most $d$ (see inequality (3) in the proof of \cite[Lemma 3.5]{ABCH}\footnote{This source uses  ``$P_\epsilon$-dimension", which corresponds to $2\e$-fat-shattering dimension as defined in \cite{AMMSW}.}). In the ``stable" setting,  Rakhlin, Sridharan, and Tewari \cite[Corollary 6]{RaSrTe} show that if $\cF$ has sequential $\delta$-fat-shattering dimension at most $t$ then  $N(\e,\cF,\xbar)\leq O((n/\e)^t)$ for any $\e>2\delta$.\footnote{This translation from \cite{RaSrTe} requires the same adjustment for ``improper" covers discussed in the context of Lemma \ref{lem:AMMSW}. Also, the  result actually bounds ``sequential covering numbers" which, in general, are larger than standard covering numbers.}
 \end{remark}

The  results in \cite{AMMSW} also have interesting consequences for analytic regularity  in the context of fat-shattering dimension. This will be the topic of forthcoming work.

\section*{Acknowledgments}

 \subsection*{Humans} The authors thank Aaron Anderson, Tom Waknine, and Julia Wolf for comments on a preliminary draft. We also thank Dhruv Mubayi for helpful conversations about hypergeometric distributions.

\subsection*{AI} ChatGPT was used for proofreading and for finding several  relevant and useful results in the literature. It also made the following mathematical contributions:

\begin{enumerate}[$(1)$]
\item Examples \ref{ex:twosticks} and \ref{ex:AMMSW} were  provided by ChatGPT upon direct request.  Proposition \ref{prop:goodcounter} is a simplification and extension of an example provided by ChatGPT.

\item While reading our original proof of Lemma \ref{lem:findgood}, ChatGPT found an innocuous typo that it interpreted as a ``substantive error". This led it to  simplify that part of the argument  using the separation property now appearing in Proposition \ref{prop:separation}. After reading the argument, we formulated the statement of Proposition \ref{prop:separation} in its current form, which is stronger than what ChatGPT provided, and used it to improve certain  quantitative aspects of Theorem \ref{thm:maintree}$(a)$.

\item  A weaker version of the analytic symmetry lemma (Lemma \ref{lem:twosticks}) was  first obtained by the authors at the Fields Institute in 2021. That version showed that a pair of $(\delta,\e)$-good sets  is $2\e^{1/4}$-almost $6\delta$-constant. After some considerable effort, we eventually produced  a much longer and more technical argument showing that for any $\gamma>0$, a pair of $(\delta,\e)$-good sets is $\frac{1+4\gamma}{\gamma}\e$-almost $(3\delta+\gamma)$-constant. We asked ChatGPT to analyze this argument for further improvements and simplifications. It provided a restructuring of the proof based on a more direct application of Proposition \ref{prop:separation}, which showed that a pair of $(\delta,\e)$-good sets is $6\e$-almost $2\delta$-constant. This argument was streamlined by the authors and rewritten based on further conversations with ChatGPT on other parts of the paper (see the next item).  This resulted in the current version for weakly good pairs.

\item The results in Section \ref{sec:equipartition} on equipartitions of good sets underwent significant revision through extended collaboration with ChatGPT. In the first version of this paper (which was written without any use of AI), we used covering numbers and thus provided quantitatively weaker  statements (as detailed in Appendix \ref{sec:appendixB}). In particular, our  first version of Theorem \ref{thm:maintree}$(b)$ provided polynomial bounds in $1/\gamma\e$, with $\gamma>0$ an independent ``slack parameter", and concluded with $(9\delta+\gamma)$-constant pairs (in the $\delta=\delta'$ case). The changes to Lemma \ref{lem:twosticks} discussed above removed the extra parameter $\gamma$ and improved  $9$ to $6$. This led to conversations with ChatGPT toward the possibility of removing the VC-theoretic covering number arguments entirely and further improving $6$ to  $2$ (in the $\delta=\delta'$ case).\footnote{Recall that by Example \ref{ex:twosticks}, this is the best value attainable by our strategy based on Lemma \ref{lem:twosticks}.}  After extended back-and-forth discussion, we isolated the new definition of ``weakly" good sets, and obtained the equipartitioning result for such pairs given in Theorem \ref{thm:equi}. The overall strategy using hypergeometric tail bounds, as well as the general structuring of the lemmas in Section \ref{sec:equipartition}, were all present in our original draft. The observation in Proposition \ref{prop:equisym} about partitioning good sets without VC-theory when $X$ and $Y$ are close in size was also originally present. On the other hand, the finer details in the proof of Theorem \ref{thm:equi} were largely provided by ChatGPT.

\end{enumerate}

All proofs and examples generated by ChatGPT were carefully checked and thoroughly revised by the authors.

\end{document}